\title{Deviation of the rank and crank modulo 11}
\author{Nikolay E. Borozenets}
\address{Department of Mathematics and Computer Science, Saint Petersburg State University, Saint Petersburg, 199034, Russia}
\email{nikolayborozenets.spbumcs@gmail.com}
\newcommand\N{\mathbb{N}}
\newcommand\Z{\mathbb{Z}}
\newcommand\Q{\mathbb{Q}}
\newcommand\R{\mathbb{R}}
\renewcommand\Theta{\vartheta}
\newcommand{\Mod}[1]{\ (\mathrm{mod}\ #1)}
\newtheorem{theorem}{Theorem}
\newtheorem{conjecture}[theorem]{Conjecture}
\newtheorem{lemma}[theorem]{Lemma}
\newtheorem{corollary}[theorem]{Corollary}
\newtheorem{proposition}[theorem]{Proposition}
\theoremstyle{definition}
\newtheorem{definition}[theorem]{Definition}
\newtheorem{remark}[theorem]{Remark}
\numberwithin{theorem}{section} 
\numberwithin{equation}{section}
\begin{document}

\date{27 December 2023}

\subjclass[2020]{11P83, 11F27, 11F30, 11B65, 11F20, 11F33}













\keywords{Partitions, Dyson's rank, crank, dissections, inequalities, congruences}

\begin{abstract}
    In this paper, we build on recent results of Frank Garvan and Rishabh Sarma as well as classical results of Bruce Berndt in order to establish the $11$-dissection of the deviations of the rank and crank modulo $11$. Using our new dissections we re-derive results of Garvan, Atkin, Swinnerton-Dyer, Hussain and Ekin. By developing and exploiting positivity conditions for quotients of theta functions, we will also prove new rank-crank inequalities and make several conjectures. For other applications of our methods, we will prove new congruences for rank moments as well as the Andrews’ smallest parts function and Eisenstein series. 
\end{abstract}
\maketitle
\section{Introduction}
A partition of a positive integer $n$ is a weakly-decreasing sequence of positive integers whose sum is $n$. We denote the number of partitions of $n$ by $p(n)$. Among the most famous results in the theory of partitions are Ramanujan’s congruences:
\begin{gather*}
p(5n+4) \equiv 0 \Mod 5,\\
p(7n+5) \equiv 0 \Mod 7,\\
p(11n+6) \equiv 0 \Mod {11}.
\end{gather*}

In 1944, Dyson \cite{D} conjectured combinatorial interpretations of the first two congruences. He defined the rank of a partition as the largest part minus the number of parts and conjectured that the rank modulo $5$ divided the partitions of $5n+4$ into $5$ equal classes and that the rank modulo $7$ divided the partitions of $7n+5$ into $7$ equal classes. For example the partitions of the number $4$ are $(4),(3,1),(2,2),(2,1,1),(1,1,1,1)$ and their ranks are $3,1,0,-1,-3$ respectively, giving an equinumerous distribution of the partitions of $4$ into the five residue classes modulo $5$. His modulo $5$ and modulo $7$ rank conjectures were proved by Atkin and Swinnerton-Dyer \cite{ASD}.

Although the rank does not explain Ramanujan’s third congruence, Dyson conjectured another function, which he called the crank, that would divide the partitions of $11n+6$ into $11$ equal classes. Andrews and Garvan later discovered the crank \cite{AG}. For a partition $\pi$, let $\lambda(\pi)$ denote the largest part, $\Theta(\pi)$ the number of  ones, and $\mu(\pi)$ the number of parts larger than $\Theta(\pi)$. The crank of $\pi$, denoted $c(\pi)$, is defined as follow
\begin{equation*}
c(\pi) := \begin{cases}
   \lambda(\pi), \ \ &\text{when} \ \  \Theta(\pi)=0,\\
   \mu(\pi) - \Theta(\pi), \ \ &\text{otherwise}.
 \end{cases}
\end{equation*}
The cranks of the five partitions of $4$ are $4,0,2,-2,-4$ respectively, giving an equinumerous distribution of the partitions of $4$ into the five residue classes modulo $5$. 

Let $N(m,n)$ denote the number of partitions of $n$ with rank $m$ and $N(a, r,n)$ denote the number of partitions of $n$ with $\text{rank} \equiv a \Mod r$. Note that there is a symmetric property $N(a,r,n)=N(r-a,r,n)$. Let $M(m,n)$ denote the number of partitions of $n$ with crank $m$ and $M(a, r,n)$ denote the number of partitions of $n$ with $\text{crank} \equiv a \Mod r$. As with the rank, the crank has the symmetry property $M(a,r,n)=M(r-a,r,n)$. Andrews and Garvan \cite{AG} showed
\begin{gather*}
M(a, 5, 5n+4) = \frac{p(5n+4)}{5}, \ \ \text{for} \ 0 \leq a \leq 4,\\
M(a, 7, 7n+5) = \frac{p(7n+5)}{7}, \ \ \text{for} \ 0 \leq a \leq 6,\\
M(a, 11, 11n+6) = \frac{p(11n+6)}{11}, \ \ \text{for} \ 0 \leq a \leq 10.
\end{gather*}
Let $q := e^{2\pi i z}$ be a nonzero complex number with $\operatorname{Im}(z) > 0$. Using the terminology above, we define the deviation of the rank from the expected value to be
\begin{equation*}
D(a,r) = D(a,r;q) := \sum_{n=0}^{\infty} \left(N(a,r,n) - \frac{p(n)}{r}\right)q^n
\end{equation*}
and the deviation of the crank from the expected value to be
\begin{equation*}
D_{C}(a,r) = D_{C}(a,r;q) := \sum_{n=0}^{\infty} \left(M(a,r,n) - \frac{p(n)}{r}\right)q^n.
\end{equation*}
Recall the $q$-Pochhammer notation, defined by
\begin{gather*}
(x)_n=(x;q)_n:=\prod_{i=0}^{n-1}(1-xq^i).
\end{gather*}
Also recall the definition of the universal mock theta function
\begin{equation*} 
 g(x;q) := x^{-1}\left(-1+\sum_{n=0}^{\infty} \frac{q^{n^2}}{(x)_{n+1}(q/x)_n} \right)
\end{equation*}
and the definition of the theta function
\begin{equation*} 
j(x;q) := (x)_{\infty}(q/x)_{\infty}(q)_{\infty} = \sum_{k = -\infty}^{\infty} (-1)^k q^{\binom{k}{2}}x^k,
\end{equation*}
where the equivalence of product and sum follows from Jacobi’s triple
product identity. Let $a$ and $m$ be integers with $m$ positive. We introduce
\begin{gather} \label{definition:JPX}
J_{a,m}:=j(q^a;q^m), \ \ J_{m}:=J_{m,3m}=(q^{m};q^{m})_{\infty} \ \ \text{and} \ \ P_{i} := J_{i,11}, \ \ X_{i} := J_{11i,121}.
\end{gather}
To formulate our main results we need the following notation.
\begin{definition} \label{definition:v11G11Theta} We define
\begin{align} 
\begin{split}
v_{11}(a_0, a_1, a_2, a_3, a_4&,  a_5, a_7, a_8, a_9, a_{10})\\
&:= \frac{J_{121}^2}{11} \Big( a_0\frac{1}{X_1} + a_1q\frac{X_5}{X_2X_3} +a_2q^2\frac{X_3}{X_1X_4} +a_3q^3\frac{X_2}{X_1X_3} + a_4q^4\frac{1}{X_2}\\
&\qquad + a_5q^5\frac{X_4}{X_2X_5} + a_7q^7\frac{1}{X_3}+a_8q^{19}\frac{X_1}{X_4X_5} +a_9q^{9}\frac{1}{X_4} + a_{10}q^{10}\frac{1}{X_5} \Big ),
\end{split}
\end{align}
\begin{align*} 
G_{11}(b_0,b_4,b_7,b_9,b_{10})&:=
b_0q^{22}g(q^{22};q^{121})+b_4q^{37}g(q^{44};q^{121})\\
& \qquad +b_{7}q^{40}g(q^{55};q^{121})+b_9q^{31}g(q^{33};q^{121})
+b_{10}[q^{-1}+q^{10}g(q^{11};q^{121})],
\end{align*}
and
\begin{equation} \label{expression:vterm}
\Theta(a_1,a_2,a_3,a_4,a_5) := \frac{J_{11}^6}{J_{1}^2} \Big[a_1\frac{q^{2}}{P_{4}P_{5}^2}+a_2\frac{1}
{P_{1}^2P_{3}}+a_3\frac{q}{P_{1}P_{4}^2}+a_4\frac{q}{P_{2}^2P_{5}}+a_5\frac{q}{P_{2}P_{3}^2}\Big].
\end{equation}
\end{definition}
The next two theorems give the $11$-dissections for the deviations of the crank and rank modulo $11$.
\begin{theorem} \label{theorem:crankdissection} We have
\begin{align*}
D_{C}(0,11) &= v_{11}(10,-12,-2,8,6,4,-4,-6,-8,2),\\
D_{C}(1,11) &= v_{11}(-1,10,-2,-3,-5,4,7,5,3,2),\\
D_{C}(2,11) &= v_{11}(-1,-1,9,-3,6,-7,-4,-6,3,2) ,\\
D_{C}(3,11) &= v_{11}(-1,-1,-2,8,-5,4,-4,5,3,-9) ,\\
D_{C}(4,11) &= v_{11}(-1,-1,-2,-3,6,-7,7,5,-8,2) ,\\
D_{C}(5,11) &= v_{11}(-1,-1,-2,-3,-5,4,-4,-6,3,2).
\end{align*}
\end{theorem}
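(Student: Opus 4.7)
The plan is to compute $D_{C}(a,11;q)$ by Fourier-decomposing the Andrews--Garvan crank generating function
$$\sum_{n,m} M(m,n) z^m q^n = \frac{(1-z)J_1^2}{j(z;q)}$$
at $z = \zeta^k$, where $\zeta = e^{2\pi i/11}$ and $k = 0, 1, \ldots, 10$. Character orthogonality on $\Z/11\Z$ gives
$$\sum_{n \geq 0} M(a,11,n)\, q^n = \frac{1}{11}\sum_{k=0}^{10} \zeta^{-ak}\frac{(1-\zeta^k)J_1^2}{j(\zeta^k;q)},$$
with the $k=0$ term (understood by continuity, since $j(z;q) \sim (1-z)J_1^3$ as $z\to 1$) supplying $\sum_n p(n)q^n/11$. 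Subtracting this term yields the working formula
$$D_{C}(a,11;q) = \frac{1}{11}\sum_{k=1}^{10} \zeta^{-ak}(1-\zeta^k)\frac{J_1^2}{j(\zeta^k;q)}.$$

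The heart of the argument is the $11$-dissection of the single quotient $J_1^2/j(\zeta^k;q)$. Building on the recent work of Garvan and Sarma combined with the classical product expansions of Berndt, one obtains an identity of the shape
$$\frac{J_1^2}{j(\zeta^k;q)} = J_{121}^2 \sum_{i} c_i(\zeta^k)\, q^{e_i}\, R_i,$$
where the $R_i$ run over the ten theta-function monomials $1/X_1,\ X_5/(X_2X_3),\ X_3/(X_1X_4),\ X_2/(X_1X_3),\ 1/X_2,\ X_4/(X_2X_5),\ 1/X_3,\ X_1/(X_4X_5),\ 1/X_4,\ 1/X_5$ that appear inside $v_{11}$, the exponents $e_i \in \{0,1,2,3,4,5,7,19,9,10\}$ are chosen so that $q^{e_i} R_i$ has all of its $q$-powers in one prescribed residue class modulo $11$ (the class $6$ being absent), and $c_i(\zeta^k) \in \Z[\zeta]$ is explicit. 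The common prefactor $J_{121}^2$ and the missing $q^6$-slot are precisely the combinatorial features encoded in Definition \ref{definition:v11G11Theta}.

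Substituting this dissection into the Fourier expression for $D_{C}(a,11;q)$ and exchanging the two summations produces
$$D_{C}(a,11;q) = v_{11}\bigl(A_0(a), A_1(a), A_2(a), A_3(a), A_4(a), A_5(a), A_7(a), A_8(a), A_9(a), A_{10}(a)\bigr),$$
with $A_i(a) = \sum_{k=1}^{10} \zeta^{-ak}(1-\zeta^k)\, c_i(\zeta^k)$. Each $A_i(a)$ is invariant under every Galois automorphism $\zeta \mapsto \zeta^j$ with $\gcd(j,11)=1$, hence lies in $\Q$; a finite explicit cyclotomic computation using only $1+\zeta+\cdots+\zeta^{10}=0$ shows that each $A_i(a)$ is precisely the integer displayed in the theorem for $a = 0, 1, 2, 3, 4, 5$. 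The remaining cases $a = 6, \ldots, 10$ follow at once from the crank symmetry $M(a,11,n) = M(11-a,11,n)$, which corresponds on the Fourier side to the involution $k \mapsto -k$.

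The principal obstacle is extracting the $11$-dissection of $J_1^2/j(\zeta^k;q)$ in the specific basis compatible with Definition \ref{definition:v11G11Theta}; this requires a delicate manipulation of the Garvan--Sarma identity together with Berndt's theta product formulas in order to align the exponents $e_i$ and to collect the coefficients $c_i(\zeta^k)$ in $\Z[\zeta]$. Everything downstream is bounded cyclotomic arithmetic that simply needs to be matched against the ten-tuples of integers given in the statement.
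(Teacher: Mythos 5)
Your proposal is correct and follows essentially the same route as the paper: the orthogonality formula $D_{C}(a,11)=\tfrac{1}{11}\sum_{j=1}^{10}\zeta_{11}^{-aj}F(\zeta_{11}^{j};q)$, a known $11$-dissection of the crank generating function at $z=\zeta_{11}^{j}$ rewritten in the $v_{11}$ basis (the prefactor $J_{121}^2$ coming from the analogue of (\ref{identity:rearrangingterms}) at level $q^{11}$), followed by finite cyclotomic arithmetic with $1+\zeta_{11}+\cdots+\zeta_{11}^{10}=0$ to produce the stated integer tuples. The only slip is attribution: the needed dissection with coefficients in $\Z[\zeta_{11}]$ is taken ready-made from Berndt--Chan--Chan--Liaw \cite[Theorem 7.1]{B} (Garvan--Sarma enters only for the rank in Theorem \ref{theorem:rankdissection}), so no further delicate manipulation is required at that step.
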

\begin{theorem} \label{theorem:rankdissection} We have
\begin{align*}
D(0,11) &= G_{11}(-2,0,0,0,0) + v_{11}(10,-12,-2,8,6,4,18,-6,-8,2) + \sum_{m=0}^{10} \Theta_{0,m}(q^{11})q^m,\\
D(1,11) &= G_{11}(1,0,-1,0,0) + v_{11}(-1,10,-2,-3,-5,4,-4,-6,3,2) + \sum_{m=0}^{10} \Theta_{1,m}(q^{11})q^m,\\
D(2,11) &= G_{11}(0,0,1,0,-1) + v_{11}(-1,-1,9,-3,6,-7,-4,5,3,2) + \sum_{m=0}^{10} \Theta_{2,m}(q^{11})q^m,\\
D(3,11) &= G_{11}(0,0,0,1,1) + v_{11}(-1,-1,-2,-3,17,-7,-4,5,3,-9) + \sum_{m=0}^{10} \Theta_{3,m}(q^{11})q^m,\\
D(4,11) &= G_{11}(0,1,0,-1,0) + v_{11}(-1,-1,-2,-3,6,4,-4,5,-8,13) + \sum_{m=0}^{10} \Theta_{4,m}(q^{11})q^m,\\
D(5,11) &= G_{11}(0,-1,0,0,0) + v_{11}(-1,-1,-2,-3,-5,4,-4,-6,3,-9) + \sum_{m=0}^{10} \Theta_{5,m}(q^{11})q^m,
\end{align*}
where 
\begin{align*}
\Theta_{0,6}(q) &= \Theta(0,0,2,2,-2),\\
\Theta_{1,6}(q) &= \Theta(-1,1,-1,-2,1),\\
\Theta_{2,6}(q) &=\Theta(1,0,-1,2,0) ,\\
\Theta_{3,6}(q) &=\Theta(1,0,1,-1,-1) ,\\
\Theta_{4,6}(q) &=\Theta(0,-1,1,0,2),\\
\Theta_{5,6}(q) &=\Theta(-1,0,-1,0,-1),
\end{align*}
and the other $\Theta_{a,m}(q)$ are given explicitly in Section \ref{section:devrank}.
\end{theorem}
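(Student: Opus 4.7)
The plan is to extract each residue class by character orthogonality and then $11$-dissect the resulting specializations of the two-variable rank generating function $R(z;q):=\sum_{m,n} N(m,n) z^m q^n$. Writing $\zeta_{11}=e^{2\pi i/11}$, orthogonality gives
\begin{equation*}
11\,D(a,11;q) \;=\; \sum_{k=1}^{10}\zeta_{11}^{-ak}\,R(\zeta_{11}^{k};q),
\end{equation*}
so the problem reduces to computing the $11$-dissection of each $R(\zeta_{11}^{k};q)$ for $k=1,\dots,10$ and then collecting residues via the character sum.

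For the dissection itself I will use a Hickerson--Mortenson / Garvan--Sarma style identity of the shape
\begin{equation*}
R(\zeta;q) \;=\; T(\zeta;q) \;+\; U(\zeta;q)\,g(\zeta;q),
\end{equation*}
where $T$ and $U$ are explicit theta quotients and $g$ is the universal mock theta function of the excerpt. Specializing $\zeta=\zeta_{11}^{k}$ and invoking Berndt's classical $11$-dissections of the basic theta functions $J_{a,11}$ (which reexpress the $P_i$ in terms of the $X_j$ and $J_{121}$) converts $T(\zeta_{11}^{k};q)$ into a linear combination of the ten canonical shapes $1/X_i$, $X_5/(X_2X_3)$, etc., that sit inside $v_{11}$. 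Simultaneously, the Appell--Lerch / Hickerson--Mortenson expansion of $g(\zeta_{11}^{k};q)$ in powers of $q$ produces the five mock summands $g(q^{11a};q^{121})$ for $a\in\{1,2,3,4,5\}$ weighted by explicit $q$-powers --- these are exactly the terms packaged into $G_{11}$ --- while the residual theta quotients that accompany this expansion have denominators of the form $P_{i}^{2}P_{j}$ and yield the five shapes inside $\Theta$.

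A useful shortcut is provided by Theorem \ref{theorem:crankdissection}. Since the crank generating function $C(z;q)=(q)_\infty/((zq;q)_\infty(z^{-1}q;q)_\infty)$ is a pure theta quotient, the difference
\begin{equation*}
11\bigl(D(a,11;q)-D_{C}(a,11;q)\bigr) \;=\; \sum_{k=1}^{10}\zeta_{11}^{-ak}\bigl(R(\zeta_{11}^{k};q)-C(\zeta_{11}^{k};q)\bigr)
\end{equation*}
isolates the mock-theta contribution plus a clean theta correction. Using this identity, one need only account for the $G_{11}$ summand, the changes in the $v_{11}$ arguments between Theorem \ref{theorem:rankdissection} and Theorem \ref{theorem:crankdissection} (for example $-4\leadsto 18$ in $D(0,11)$, coming from the $q^{7}/X_{3}$ slot), and the $\Theta_{a,m}$ pieces. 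The six explicit $\Theta_{a,6}$ values are the most delicate; the other $\Theta_{a,m}$ with $m\ne 6$ are obtained by the same routine and recorded in the deferred section.

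The hard part is the algebraic normalization: the partial-fraction residuals arising from the Hickerson--Mortenson expansion of $g(\zeta_{11}^{k};q)$ do not manifestly take the forms listed in $v_{11}$ and $\Theta$, and bringing them into that normal form requires repeated application of the quadratic theta identities among the $P_i$, $X_j$, $J_{11}$ and $J_{121}$ that underlie Berndt's $11$-dissection. Once the normal form is reached, the identity is verified by matching $q$-series to an order determined by the weight and level of the underlying modular and mock modular objects, with the mock piece cleanly isolated by the explicit $G_{11}$ term, making the finite coefficient check rigorous.
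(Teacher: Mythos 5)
Your starting point agrees with the paper's: orthogonality reduces everything to the $11$-dissection of $R(\zeta_{11}^{k};q)$, and the mock piece can indeed be packaged as $G_{11}$ (the paper itself notes in Remark \ref{remark:g11} that this part follows from \cite[Theorem 4.1]{HM17}). The gap is at the decisive step, the explicit theta parts (the $v_{11}$ coefficients and the $\Theta_{a,m}$). You propose to get them by ``invoking Berndt's classical $11$-dissections of the basic theta functions $J_{a,11}$,'' but no such off-the-shelf tool does this job: what Berndt et al.\ \cite{B} dissect is the specific crank quotient $F(\zeta_{11};q)$, which is exactly what proves Theorem \ref{theorem:crankdissection} and is \emph{not} used in the paper's proof of Theorem \ref{theorem:rankdissection}; dissections of the individual $J_{a,11}$ do not pass to the theta quotients that accompany the Appell--Lerch expansion of $g(\zeta_{11}^{k};q)$. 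Your crank ``shortcut'' also buys nothing: since $R(\zeta;q)=(1-\zeta)\bigl(1+\zeta g(\zeta;q)\bigr)$, subtracting $C(\zeta;q)$ only modifies the theta part one still has to dissect, and reading off ``the changes in the $v_{11}$ arguments'' between the two theorems presupposes the answer rather than deriving it. Finally, the closing claim that the identity is ``verified by matching $q$-series to an order determined by the weight and level'' is asserted, not established: to make a Sturm-type finite check rigorous you must first prove that the mock contributions on the two sides cancel identically (this is precisely the change-of-$z$ Appell--Lerch computation, which generates further theta corrections you have not tracked) and then bound the orders at \emph{all} cusps of the resulting generalized eta quotients; none of this framework is set up in your sketch.

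For contrast, the paper never dissects $g(\zeta_{11}^{k};q)$ directly. It imports Garvan's explicit formula for the residue-$6$ element \cite{G19}, the Garvan--Sarma formulas for residues $7$ and $8$ \cite{GS}, transports these to the remaining residues via the symmetry theorem \cite[Theorem 4.11]{GS}, rewrites $\Phi_{11,a}$ in terms of $g$, applies the deviation formula $D(a,r)=\frac1r\sum_{j}\zeta_r^{-aj}R(\zeta_r^{j};q)$, and then normalizes the resulting theta quotients with the ten three-term Weierstrass relations (\ref{identity:wr1})--(\ref{identity:wr10}) and (\ref{identity:rearrangingterms}) to reach the $[c_1,\ldots,c_5;c_6]_m$ and $\Theta$ shapes recorded in Section \ref{section:devrank}. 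An HM-style derivation along your lines is viable in principle (it is how the moduli $5$, $7$, $4$, $8$ cases were done), but it would require carrying out the change-of-$z$ computations and the ensuing theta algebra explicitly, or a fully justified modularity bound; as written, your proposal defers exactly that content.
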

\begin{remark} 
The dissections for the deviations of the rank and crank modulo $5$, $7$ and $4$, $8$ were found by Hickerson and Mortenson \cite{HM17, M} and we use the setting from there.
\end{remark} 

\subsection{A prelude to new proofs of classical results}\label{subsection:preludeold}
Using Theorems \ref{theorem:crankdissection} and \ref{theorem:rankdissection}, we give new proofs of classical results. In Section \ref{subsection:crankequal}  we re-derive crank equalities found by Garvan \cite{G88} such as, for $n \geq 0$,
\begin{gather*} 
M(0,11,11n+1) + M(1,11,11n+1) = 2M(2,11,11n+1),\\
M(2,11,11n+1) = M(3,11,11n+1) = M(4,11,11n+1) = M(5,11,11n+1).
\end{gather*}
In Section \ref{subsection:crankinequal} we re-derive crank-crank inequalities, which were first proved by Ekin \cite{E} and Berkovich and Garvan \cite{BG}, such as, for $n \geq 0$,
\begin{equation*}
M(1,11,11n+1) \geq \frac{p(11n+1)}{11} \geq M(2,11,11n+1) \geq M(0,11,11n+1).
\end{equation*}
In Section \ref{subsection:partfunccong} we re-derive congruences for the partition function, which were establish by Atkin and Swinnerton-Dyer \cite{ASD}, such as
\begin{equation*}
\sum_{n = 0}^{\infty} p(11n)q^n \equiv \frac{J_{11}^2}{P_1} \Mod {11}.
\end{equation*}
In Section \ref{subsection:linearrangcong} we re-derive linear rank congruences due to Atkin and Hussain \cite{AH}, such as, for $n \geq 0$,
\begin{equation*} 
N(2,11;11n)-5N(3,11;11n)-2N(4,11;11n)+6N(5,11;11n) \equiv 0 \Mod {11}.
\end{equation*}

\subsection{A prelude to new results and new conjectures} \label{subsection:preludenew} In Section \ref{section:mainresults}  we present new rank and rank-crank inequalities, such as, for $n \geq 0$,
\begin{equation*} 
2N(2,11,11n)+N(3,11,11n) + N(5,11,11n) \geq 4N(4,11,11n), 
\end{equation*}
which follow directly from the positivity of Fourier coefficients
of theta quotients as described in Section \ref{subsection:positivitytechniques}. Then as a corollary we derive two-term, four-term and six-term inequalities, such as, for $n \geq 0$,
\begin{align*} 
M(1,11,11n) &\geq N(4,11,11n),\\
N(2,11,11n)+N(3,11,11n) &\geq N(4,11,11n)+M(1,11,11n),\\
N(2,11,11n)+2N(3,11,11n)&\geq N(5,11,11n)+2M(1,11,11n).
\end{align*}
Using a numerical computing environment, it is possible to generate higher order inequalities for eight-terms, ten-terms, etc.

As another application of Theorem \ref{theorem:rankdissection} we present in Section \ref{section:mainresults} new congruences for rank moments, such as
\begin{equation*} 
\sum_{n=0}^{\infty}\left(\sum_{m=-\infty}^{\infty} m^2N(m,11n+6) \right)q^n \equiv \Theta(-4,3,1,5,-2)\Mod {11}.
\end{equation*}
and congruences for the Andrews’ smallest parts function $\operatorname{spt}(n)$, such as
\begin{equation*} 
\sum_{n=0}^{\infty}\operatorname{spt}(11n+6)q^n \equiv \Theta(2,4,5,3,1) \Mod {11}.
\end{equation*}
In Section \ref{subsection:conjinequal} we state new conjectural two-term rank and rank-crank inequalities, such as
\begin{multline*} 
N(0,11,11n) \geq_{3} N(1,11,11n) \geq N(2,11,11n) \geq_{1} M(0,11,11n) \geq \frac{p(11n)}{11}\geq \\ \geq M(1,11,11n) \geq N(3,11,11n) \geq_{2} N(4,11,11n) \geq N(5,11,11n),
\end{multline*}
where $A_n \geq B_n$ means that $A_n \geq B_n$ for all $n \geq 0$ and $A_n \geq_m B_n$ means that $A_n \geq B_n$ for all $n \geq m$. Also in Section \ref{subsection:conjinequal} we state Conjecture \ref{conjecture:positivitycor}, which is the generalization of our observations on the positivity of Fourier coefficients
of theta quotients provided in Section \ref{subsection:positivitytechniques} and we state Conjecture \ref{conjecture:arbinequal}, which is the generalization of our new rank and rank-crank inequalities from Section \ref{section:mainresults}.

\subsection{A guide to the paper} In Section \ref{section:mainresults} we state our new rank-crank inequalities and new congruences as described in Section \ref{subsection:preludenew}. In Section \ref{section:proofdevcrank} we prove Theorem \ref{theorem:crankdissection}, which gives us the $11$-dissection of the deviation of the crank modulo $11$. In Section \ref{section:devrank} we present all the dissection elements of the deviation of the rank and prove Theorem \ref{theorem:rankdissection}.

In Section \ref{section:oldresultsproofs} we provide new proofs for classical results
as described in Section \ref{subsection:preludeold}. In Section \ref{subsection:crankinequal} we observe known results on crank inequalities as described in Section \ref{subsection:preludenew}. In Section \ref{subsection:positivitytechniques} we develop techniques to exploit the positivity of Fourier
coefficients. In Section \ref{subsection:rankcrankinequal} we use results from Section \ref{subsection:positivitytechniques} to prove some examples of new rank-crank inequalities. Proofs of other new rank-crank inequalities are straightforward and similar but for the sake of prosperity and to underscore the role played by the dissections of Theorems \ref{theorem:crankdissection} and \ref{theorem:rankdissection}, we place in Section \ref{section:fullcalc} all the proofs for the new results found in Section \ref{section:mainresults}. In Section \ref{subsection:conjinequal} we conjecture rank-crank inequalities and state Conjecture \ref{conjecture:positivitycor}, which describes in general the positivity of Fourier coefficients of sums of theta quotients. Also in Section \ref{subsection:conjinequal} we state Conjecture \ref{conjecture:arbinequal}, which describes the general case of rank-crank inequalities. In Section \ref{section:momentscalc} we prove new congruences for rank moments, Andrews’ smallest parts function $\operatorname{spt}(n)$ and Eisenstein series.

\section{The Main Results in Full}\label{section:mainresults}
Using Theorem \ref{theorem:crankdissection} and Theorem \ref{theorem:rankdissection} we are able to find new rank and rank-crank inequalities.

\begin{theorem} \label{theorem:rankcrankbasicineq} Consider $n\geq 0$. For $N_i = N(i,11,11n)$ and $M_i = M(i,11,11n)$ we have 
\begin{align*}
N_0 + 2N_1 + M_1 &\geq 2N_2+N_4+M_0,\\
 N_0 +2N_1+3N_2 + M_1 &\geq 3N_3 + 3N_5+M_0,\\
  2N_2+N_3 + N_5&\geq 4N_4,\\
   N_2+5N_3+3N_4+M_0 &\geq N_0+2N_1+6N_5+M_1.
\end{align*}
For $N_i = N(i,11,11n+1)$ and $M_i = M(i,11,11n+1)$ we have 
\begin{align*}
 N_0 +4N_2+N_4+M_0&\geq 2N_1+3N_3+N_5+M_2,\\
  N_0+3N_1+6N_3+M_0&\geq 4N_2+6N_4+M_2,\\
  2N_1+6N_4 &\geq N_2+3N_3+4N_5 ,\\
   N_1+2N_2+3N_5 +3M_2 &\geq N_0+N_3+4N_4+2M_0+M_1,\\
    3N_2+2N_3+N_4+3M_2 &\geq N_0+N_1+4N_5+2M_0+M_1.
\end{align*}
For $N_i = N(i,11,11n+2)$ and $M_i = M(i,11,11n+2)$ we have 
\begin{align*}
 3N_2+N_4 &\geq 2N_0+2N_5\\
2N_1+2N_3+N_5+2M_0 &\geq 2N_2+3N_4+2M_2\\
3N_0+2N_2+M_2 &\geq N_1+N_3+3N_5+M_0 \\
2N_0+N_1+N_3+3N_4+M_0 &\geq 4N_2+3N_5+M_2\\
N_0+3N_1+N_2+8N_5+3M_0 &\geq 8N_3+5N_4+3M_2.
\end{align*}
For $N_i = N(i,11,11n+3)$ and $M_i = M(i,11,11n+3)$ we have 
\begin{align*}
N_0+2N_3+M_1 &\geq N_1+2N_5+M_0,\\
5N_1+2N_2+2N_4 &\geq 2N_0 + 4N_3 + 3N_5,\\
2N_0+4N_3+N_5+M_0 &\geq N_1+3N_2+3N_4+M_1,\\
6N_2+3N_5+5M_1 &\geq N_0+N_1+2N_3+5N_4+5M_0 ,\\
4N_0+2N_1+4N_4+3M_0 &\geq 7N_2+3N_3+3M_1.
\end{align*}
For $N_i = N(i,11,11n+4)$ and $M_i = M(i,11,11n+4)$ we have 
\begin{align*}
4N_1+3N_3+5M_1 &\geq 2N_0 + 3N_2 + N_4+N_5+5M_0,\\
  4N_0 +5N_2 +3M_0 &\geq 5N_1+2N_3+N_4+N_5+3M_1,\\
   3N_1+N_2+M_0 &\geq 2N_0 +2N_3+M_1 ,\\
   3N_0 +N_4+N_5+M_1 &\geq 3N_2+2N_3+M_0.
\end{align*}
For $N_i = N(i,11,11n+5)$ and $M_i = M(i,11,11n+5)$ we have 
\begin{align*}
3N_0+N_2+N_5+2M_2 &\geq2N_1+3N_3+2M_0,\\
7N_1+N_4+3M_2 &\geq 3N_0+2N_2+N_3+2N_5+3M_0,\\
2N_0+N_2+3N_3+N_5+4M_0&\geq 3N_1+4N_4+4M_2,\\
4N_2+7N_3+2N_4 &\geq 4N_0+2N_1+7N_5 ,\\
4N_1 + N_2 +2N_4+N_5+5M_2 &\geq 2N_0+6N_3+5M_0 .
\end{align*}
For $N_i = N(i,11,11n+7)$ and $M_i = M(i,11,11n+7)$ we have 
\begin{align*}
 N_0 +2N_4 +2M_0&\geq 2N_3+N_5+2M_1,\\
3N_0+N_1+N_2+7N_3+4N_5&\geq 5N_4+4M_0+7M_1,\\
4N_1+4N_2+3N_4+4M_1 &\geq  4N_0 +6N_3+N_5 + 4M_0,\\
N_0 + 5N_3 + 2N_4&\geq 2N_1+2N_2+4N_5.
\end{align*}
For $N_i = N(i,11,11n+8)$ and $M_i = M(i,11,11n+8)$ we have 
\begin{align*}
N_0 + 2N_2 + 3N_4 + 5M_0 &\geq 3N_1+3N_5+5M_1,\\
  2N_1 +4N_3+2N_5+3M_1 &\geq N_0+2N_2+5N_4+3M_0,\\
   2N_1 + 5N_2 + 2N_5 + M_1 &\geq 3N_0 + 5N_3+N_4+M_0,\\
    7N_0 + 4N_1 + 5N_4 +4N_5&\geq 8N_2+N_3+4M_0+7M_1,\\
    5N_0 + 6N_1 +4N_3+2N_4&\geq N_2 +5N_5 +6M_0+5M_1.
\end{align*}
For $N_i = N(i,11,11n+9)$ and $M_i = M(i,11,11n+9)$ we have 
\begin{align*}
4N_2+M_0 &\geq 2N_1+N_3+N_4+M_1,\\
4N_1+N_3+N_4&\geq N_0+3N_2+2N_5,\\
 4N_0+3N_3+3N_4+4M_0 &\geq 2N_1+5N_2+3N_5+4M_1 ,\\
  3N_1+4N_2+7N_5 +3M_1 &\geq 2N_0 +6N_3+6N_4+3M_0.
\end{align*}
For $N_i = N(i,11,11n+10)$ and $M_i = M(i,11,11n+10)$ we have 
\begin{align*}
 3N_1+2N_5+2M_3 &\geq 2N_2+2N_3+N_4+2M_0,\\
3N_0 + N_2 + N_3 + M_0 &\geq  3N_1 + 2N_4 + M_3,\\
 2N_1+4N_4 + M_3 &\geq N_0+N_2+N_3+3N_5+M_0,\\
6N_2+6N_3+6M_0+5M_3&\geq 6N_0 +6N_1+3N_4+8N_5.
\end{align*}
\end{theorem}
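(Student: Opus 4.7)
My plan is to reduce each listed inequality to a coefficient-positivity statement for a single explicit $q$-series, built from the dissections in Theorems \ref{theorem:crankdissection} and \ref{theorem:rankdissection}. The first step is routine bookkeeping: in every inequality the total coefficient on the left-hand side equals the total coefficient on the right-hand side (for instance, $1+2+1 = 2+1+1$ in the first line), so the $p(11n+k)/11$ contributions cancel. Writing an inequality $A_n \geq B_n$ as $A_n - B_n = \sum_i c_i N(i,11,11n+k) + \sum_j d_j M(j,11,11n+k)$ with $\sum c_i + \sum d_j = 0$, it is equivalent to
\[
[q^{11n+k}]\Bigl(\sum_i c_i\, D(i,11;q) + \sum_j d_j\, D_C(j,11;q)\Bigr) \geq 0 \qquad (n \geq 0).
\]

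Next, I would substitute the dissections. Every summand of $v_{11}$ has the shape $q^m \cdot \tfrac{J_{121}^2}{11} \cdot (\text{monomial in the } X_\ell)$ with $m \in \{0,1,2,3,4,5,7,8,9,10\}$ (the term $q^{19} X_1/(X_4 X_5)$ lies in residue $8$ modulo $11$), while the five summands of $G_{11}$ sit in residues $0,4,7,9,10$ respectively, and each $\Theta_{i,m}(q^{11})q^m$ lives in residue $m$. Extracting the coefficient of $q^{11n+k}$ is then mechanical: collect the pieces landing in residue $k$, peel off $q^k$, and rename $q^{11} \mapsto q$. The linear combinations in the theorem statement are arranged so that the mock pieces carried by $G_{11}$ either cancel out after combination or contribute only through the theta piece $q^{-1}$ of $b_{10}[q^{-1}+q^{10}g(q^{11};q^{121})]$; what remains is a clean $\Z$-linear combination of theta products in $J_{121}, X_1,\ldots,X_5, P_1,\ldots,P_5$.

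The crux is then to certify non-negativity of the Fourier coefficients of this residual theta combination, and here I would invoke the positivity techniques developed in Section \ref{subsection:positivitytechniques}. The typical manoeuvre is to place the expression over a common denominator of the $X_\ell$ (or the $P_\ell$, depending on the residue), then rewrite the numerator as a sum of products of Jacobi-type theta functions whose Fourier coefficients are manifestly non-negative via the Jacobi triple product and the Ramanujan-type identities drawn from Berndt's work cited in the abstract. One or two model inequalities would be carried out in full in Section \ref{subsection:rankcrankinequal}, after which the remaining cases follow the same template, with the routine case-by-case verifications deferred to Section \ref{section:fullcalc}. The principal obstacle is precisely this last step: no single identity settles the roughly fifty inequalities at once, and for each one a separate, sometimes delicate, rearrangement into a manifestly positive theta combination is required; the bookkeeping of the earlier steps is straightforward, but the search for an appropriate positivity certificate in each case is where the real work lies.
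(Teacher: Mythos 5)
Your reduction step agrees with the paper: since the coefficients on each side of every inequality balance, each one is equivalent to the non-negativity of the Fourier coefficients of an integral combination of the dissection elements $Q_{a,m}(q)$ and $Q^{C}_{a,m}(q)$, and in every listed combination the mock parts cancel identically (including the $q^{-1}$ pieces in residue $10$), leaving a pure theta expression. The gap is in the step you yourself flag as the real work: you never produce the positivity certificates, and you describe them as requiring a separate, possibly delicate search for each of the roughly fifty cases, with the positivity to be extracted from the Jacobi triple product and ``Ramanujan-type identities from Berndt's work.'' That is not how the proof goes, and as written your plan reduces the theorem to a family of unproved positivity claims. In the paper the certificates come from a small uniform toolkit: the three-term Weierstrass relations (\ref{identity:wr1})--(\ref{identity:wr10}) yield the comparison chains of Proposition \ref{proposition:maincomparison} (residues $i\neq 6$) and Proposition \ref{proposition:maincomparisonres6} (residue $6$), supplemented by Lemma \ref{lemma:addcomparison}, all resting on the basic positivity of Lemma \ref{lemma:blockscrankpos} and Lemma \ref{lemma:posthetablock} (Jacobi triple and quintuple products); Berndt's identity enters only in the crank dissection, not in the positivity argument.

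Moreover---and this is the point the proposal misses---the inequalities of Theorem \ref{theorem:rankcrankbasicineq} are not arbitrary combinations whose positivity must be certified one at a time by ad hoc rearrangement: they were constructed (cf.\ Remark \ref{remark: constructineq}) precisely so that each combination collapses to an elementary difference such as $[0,11,0,0,0;0]_{m}$, $[0,-11,11,0,0;0]_{m}$, $[0,0,-11,11,0;0]_{m}$, $[0,0,0,-11,11;0]_{m}$, or one of the brackets covered by Lemma \ref{lemma:addcomparison}, as verified case by case in Section \ref{section:fullcalc}. Once the comparison propositions are in hand the roughly fifty verifications are mechanical linear algebra on the bracket coefficients, not a delicate search. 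Without stating and proving those comparison results (or an equivalent device) and checking that the stated combinations reduce to them, your text is an outline of the strategy rather than a proof.
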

As a corollary of Theorem \ref{theorem:rankcrankbasicineq} we can derive two-term and four-term rank-crank inequalities. Similar two-term and four-term rank and rank-crank inequalities for modulo $2,3$ and $4$ were proved and conjectured by Andrews and Lewis \cite{AL}, for modulo $8$ were conjectured by Lewis \cite{L}, for modulo $10$ were proved and conjectured by Mao \cite{Ma}.
\begin{corollary} \label{corollary:inequallevel2level4} Consider $n\geq 0$. For $N_i = N(i,11,11n)$ and $M_i = M(i,11,11n)$ we have 
\begin{align*}
M_1 &\geq N_4,\\
N_{2}+N_{3} &\geq N_{4}+M_{1}.
\end{align*}
For $N_i = N(i,11,11n+1)$ and $M_i = M(i,11,11n+1)$ we have 
\begin{align*}
N_2 &\geq M_2,\\
M_2 &\geq N_4,\\
N_{2}+N_{4} &\geq N_{3}+N_{5}.
\end{align*}
For $N_i = N(i,11,11n+2)$ and $M_i = M(i,11,11n+2)$ we have 
\begin{align*}
N_1 &\geq M_2,\\
M_0 &\geq N_5,\\
2 M_{0} &\geq N_{2}+N_{5},\\
N_{2}+M_{0} &\geq N_{0}+N_{5},\\
N_{1}+N_{3} &\geq M_{0}+M_{2},\\
N_{1}+N_{5} &\geq N_{4}+M_{2}.
\end{align*}
For $N_i = N(i,11,11n+3)$ and $M_i = M(i,11,11n+3)$ we have 
\begin{align*}
N_0 &\geq M_0,\\
M_1 &\geq N_4,\\
N_{0}+M_{0} &\geq 2 N_{2},\\
N_{2}+M_{1} &\geq N_{4}+M_{0},\\
N_{0}+N_{3} &\geq N_{2}+M_{1}.
\end{align*}
For $N_i = N(i,11,11n+4)$ and $M_i = M(i,11,11n+4)$ we have 
\begin{align*}
N_1 &\geq M_0,\\
2 N_{1}&\geq N_{0}+M_{0},\\
N_{1}+M_{1}&\geq N_{2}+M_{0},\\
N_{1}+N_{3}&\geq 2 M_{0}.
\end{align*}
For $N_i = N(i,11,11n+5)$ and $M_i = M(i,11,11n+5)$ we have 
\begin{align*}
N_2 &\geq M_0,\\
M_2 &\geq N_5,\\
N_{2}+N_{3} &\geq M_{0}+M_{2},\\
N_{2}+M_{2} &\geq N_{0}+N_{5},\\
N_{0}+M_{0} &\geq N_{1}+N_{4},\\
N_{2}+N_{3} &\geq N_{1}+N_{5},\\
N_{0}+N_{2} &\geq N_{1}+M_{0}.
\end{align*}
For $N_i = N(i,11,11n+7)$ and $M_i = M(i,11,11n+7)$ we have 
\begin{align*}
N_0 &\geq M_1,\\
M_0 &\geq N_4,\\
N_{3}+N_{4} &\geq N_{5}+M_{0}.
\end{align*}
For $N_i = N(i,11,11n+8)$ and $M_i = M(i,11,11n+8)$ we have
\begin{align*}
N_2 &\geq M_1,\\
M_0 &\geq N_3,\\
M_{0}+M_{1} &\geq N_{2}+N_{4},\\
N_{1}+N_{5}  &\geq 2 N_{3},\\
N_{0}+M_{0}  &\geq 2 N_{2},\\
N_{3}+M_{0}  &\geq N_{4}+M_{1},\\
N_{2}+N_{3}  &\geq 2 M_{1},\\
2 M_{0}  &\geq N_{1}+N_{5}.
\end{align*}
For $N_i = N(i,11,11n+9)$ and $M_i = M(i,11,11n+9)$ we have
\begin{align*}
N_2 &\geq M_1,\\
M_0 &\geq N_5,\\
M_{1}+M_{0} &\geq N_{2}+N_{5} ,\\
N_{0}+M_{0} &\geq N_{2}+M_{1},\\
N_{2}+N_{5}&\geq N_{3}+N_{4}.
\end{align*}
For $N_i = N(i,11,11n+10)$ and $M_i = M(i,11,11n+10)$ we have
\begin{align*}
N_1 &\geq M_0,\\
M_3 &\geq N_4,\\
N_{0}+M_{0}&\geq N_{1}+N_{4}, \\
N_{2}+N_{3} &\geq 2 M_{0}, \\
N_{1}+N_{4}  &\geq 2 M_{0}, \\ 
N_{1}+M_{3}   &\geq N_{2}+N_{3}.
\end{align*}
\end{corollary}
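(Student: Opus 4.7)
\textbf{Proof sketch for Corollary \ref{corollary:inequallevel2level4}.}
The plan is to obtain each two-term and four-term inequality as a non-negative linear combination of the multi-term inequalities of Theorem \ref{theorem:rankcrankbasicineq}, together with three auxiliary ingredients: (i) the symmetries $N(a,11,n)=N(11-a,11,n)$ and $M(a,11,n)=M(11-a,11,n)$, so that only $N_0,\dots,N_5$ and $M_0,\dots,M_5$ appear; (ii) the partition identities $N_0+2(N_1+\cdots+N_5)=p(11n+k)$ and $M_0+2(M_1+\cdots+M_5)=p(11n+k)$; and (iii) the Garvan-type crank equalities re-derived in Section \ref{subsection:crankequal}, which for each residue $k$ collapse several of the $M_i$ to a common value.

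First I would read off, directly from the $v_{11}$-coefficients of Theorem \ref{theorem:crankdissection}, the crank equalities in each residue $k\in\{0,1,2,3,4,5,7,8,9,10\}$ and combine them with (ii) to express all the $M_i$ in terms of a small number of independent unknowns (e.g. at residue $0$ one finds $M_2=\cdots=M_5=M_1$, which forces $M_0=p(11n)-10M_1$; at residue $1$ one similarly reduces to $M_0$ and $M_2$ via $M_2=\cdots=M_5$ and $M_0+M_1=2M_2$). Substituting these reductions into Theorem \ref{theorem:rankcrankbasicineq} yields, for each residue, a reduced system of inequalities in the variables $N_0,\dots,N_5$ and the one or two independent $M_i$. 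Then for a given target $A\geq B$ in the corollary, one searches for a non-negative combination of the reduced inequalities whose difference equals $A-B$; this is a routine linear-programming task that can be solved residue-by-residue.

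As a concrete illustration for residue $0$, the inequality $M_1\geq N_4$ is obtained as follows. Substituting $M_0=p(11n)-10M_1$ and $p(11n)=N_0+2(N_1+\cdots+N_5)$ into the first basic inequality $N_0+2N_1+M_1\geq 2N_2+N_4+M_0$ gives, after simplification,
\[
11M_1\;\geq\;4N_2+2N_3+3N_4+2N_5.
\]
Applying twice the third basic inequality $2N_2+N_3+N_5\geq 4N_4$ bounds the right-hand side below by $8N_4+3N_4=11N_4$, whence $M_1\geq N_4$. The main obstacle here is not conceptual but bookkeeping: roughly fifty inequalities across ten residue classes each require their own witnessing combination. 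These exhaustive case-by-case calculations, all of which follow the same recipe, are carried out in Section \ref{section:fullcalc}.
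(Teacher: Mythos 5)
Your proposal is correct in outline and your residue-$0$ computation checks out, but it takes a genuinely different route from the paper. The paper does not deduce Corollary \ref{corollary:inequallevel2level4} from Theorem \ref{theorem:rankcrankbasicineq} by linear elimination at all: it proves each two- and four-term inequality directly from the dissections, writing the relevant combination of dissection elements $Q_{a,m}(q)$ and $Q^{C}_{a,m}(q)$ as an explicit bracket of theta quotients (for instance $Q_{2,1}(q)-Q^{C}_{2,1}(q)=[0,1,0,-1,0;1]_{1}$) and invoking Proposition \ref{proposition:maincomparison}, Lemma \ref{lemma:addcomparison} and Lemma \ref{lemma:blockscrankpos}; the exhaustive list in Section \ref{section:fullcalc} consists of exactly these theta-positivity identities, so your closing claim that your case-by-case eliminations are ``carried out in Section \ref{section:fullcalc}'' is a misattribution --- the linear-programming certificates your method needs are not there and would have to be produced by you, one for each of the roughly fifty inequalities. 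That said, your route (treat Theorem \ref{theorem:rankcrankbasicineq} as a black box, eliminate the dependent crank classes via the equalities of Theorem \ref{theorem:crankeq} together with $p(n)=N_0+2(N_1+\cdots+N_5)=M_0+2(M_1+\cdots+M_5)$, then search for non-negative combinations) is consistent with the paper's remark that the corollary inequalities are selected precisely so as to be consequences of Theorem \ref{theorem:rankcrankbasicineq}, and spot checks confirm that exact witnesses exist beyond your one example: at residue $1$, after your eliminations, the reduced first, second and fourth inequalities of Theorem \ref{theorem:rankcrankbasicineq} sum to exactly $11(M_2-N_4)$, while the first, fourth and fifth sum to $11(N_2-M_2)$. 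What your approach buys is that no new theta-function identities are needed beyond those already proved for Theorem \ref{theorem:rankcrankbasicineq}; what it costs is a separate elimination-plus-certificate computation for every single inequality, whose feasibility you assert in general but verify only once, whereas the paper's method is uniform and each inequality comes packaged with its own explicit positive theta decomposition that can be checked directly from Theorem \ref{theorem:crankdissection} and Section \ref{section:devrank}.
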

We can also consider six-term rank-crank inequalities.
\begin{corollary} \label{corollary:inequallevel6} Consider $n\geq 0$. For $N_i = N(i,11,11n)$ and $M_i = M(i,11,11n)$ we have 
\begin{align*}
N_{2}+2 N_{3}&\geq N_{5}+2 M_{1},\\
2 N_{3}+N_{4}&\geq 2 N_{5}+M_{1},\\
3 M_{1}&\geq N_{2}+N_{4}+N_{5},\\
N_{3}+2 M_{1}&\geq N_{2}+2 N_{5}.
\end{align*}
For $N_i = N(i,11,11n+1)$ and $M_i = M(i,11,11n+1)$ we have 
\begin{align*}
2 N_{1}+N_{3} &\geq N_{2}+2 N_{4},\\
2 N_{1}+N_{4} &\geq N_{2}+2 N_{5},\\
N_{1}+2 N_{4} &\geq 2 N_{5}+M_{0},\\
N_{1}+N_{3}+N_{5} &\geq 3 N_{4},\\
2 N_{2}+N_{4} &\geq 2 N_{5}+M_{0}.
\end{align*}
For $N_i = N(i,11,11n+2)$ and $M_i = M(i,11,11n+2)$ we have 
\begin{align*}
N_{0}+N_{1}+N_{4} &\geq N_{2}+M_{0}+M_{2},\\
3 M_{0}  &\geq N_{2}+N_{3}+N_{4},\\
N_{1}+N_{2}+N_{5}&\geq 2 M_{0}+M_{2},\\
3 M_{0} &\geq N_{0}+N_{4}+N_{5},\\
N_{1}+N_{3}+M_{0} &\geq N_{0}+2 N_{5},\\
N_{1}+N_{3}+M_{0} &\geq N_{2}+N_{5}+M_{2},\\
N_{1}+2 M_{0} &\geq N_{2}+N_{4}+M_{2},\\
N_{1}+N_{2}+N_{3} &\geq N_{0}+N_{5}+M_{2},\\
N_{1}+N_{2}+M_{0} &\geq N_{0}+N_{4}+M_{2}.
\end{align*}
For $N_i = N(i,11,11n+3)$ and $M_i = M(i,11,11n+3)$ we have 
\begin{align*}
2 N_{1}+N_{4} &\geq N_{3}+N_{5}+M_{1},\\
N_{0}+N_{1}+N_{5} &\geq M_{0}+2 M_{1},\\
N_{0}+N_{1}+N_{4}  &\geq N_{2}+2 M_{1},\\
M_{0}+2 M_{1} &\geq N_{2}+N_{3}+N_{5},\\
2 N_{0}+N_{5}  &\geq 2 N_{2}+N_{4},\\
N_{0}+N_{3}+N_{5} &\geq N_{4}+M_{0}+M_{1},\\
3 M_{1}  &\geq N_{1}+2 N_{4},\\
2 N_{2}+N_{5} &\geq N_{4}+2 M_{0},\\
N_{0}+N_{3}+M_{1} &\geq N_{1}+2 N_{4},\\
N_{0}+2 N_{3} &\geq M_{0}+2 M_{1}.
\end{align*}
For $N_i = N(i,11,11n+4)$ and $M_i = M(i,11,11n+4)$ we have 
\begin{align*}
2 N_{0}+M_{0} &\geq N_{1}+2 N_{3},\\
2 N_{0}+N_{2} &\geq N_{1}+N_{3}+M_{1},\\
N_{0}+N_{2}+M_{1} &\geq N_{1}+N_{4}+N_{5},\\
N_{0}+N_{2}+N_{3}  &\geq N_{4}+N_{5}+M_{0},\\
N_{0}+2 M_{1}   &\geq2 N_{2}+N_{3} ,\\
3 M_{1}   &\geq N_{2}+N_{4}+N_{5}.
\end{align*}
For $N_i = N(i,11,11n+5)$ and $M_i = M(i,11,11n+5)$ we have 
\begin{align*}
2 N_{3}+M_{0} &\geq 2 N_{5}+M_{2},\\
N_{2}+2 N_{3} &\geq N_{0}+2 N_{5},\\
N_{3}+M_{0}+M_{2} &\geq N_{0}+2 N_{5},\\
N_{3}+2 M_{0} &\geq N_{1}+N_{4}+N_{5},\\
2 N_{2}+N_{3} &\geq N_{0}+N_{5}+M_{0},\\
N_{1}+N_{2}+N_{5}  &\geq N_{3}+2 M_{0},\\
M_{0}+2 M_{2}  &\geq N_{0}+N_{3}+N_{5},\\
N_{1}+N_{2}+N_{4} &\geq 3 M_{0},\\
N_{0}+N_{2}+N_{5} &\geq 2 M_{0}+M_{2}.
\end{align*}
For $N_i = N(i,11,11n+7)$ and $M_i = M(i,11,11n+7)$ we have 
\begin{align*}
N_{1}+N_{2}+M_{1} &\geq N_{0}+N_{5}+M_{0},\\
N_{1}+N_{2}+M_{1} &\geq N_{0}+N_{3}+N_{4},\\
M_{0} +2 M_{1} &\geq N_{0}+2 N_{5},\\
N_{0}+2 N_{3} &\geq M_{0} + 2 M_{1},\\
N_{1}+N_{2}+N_{4} &\geq 2 M_{0} + M_{1}.
\end{align*}
For $N_i = N(i,11,11n+8)$ and $M_i = M(i,11,11n+8)$ we have
\begin{align*}
N_{1}+N_{5}+M_{1} &\geq N_{2}+N_{3}+N_{4},\\
M_{0}+2 M_{1}  &\geq N_{0}+2 N_{4},\\
N_{1}+N_{2}+N_{5}  &\geq N_{0}+2 N_{4},\\
N_{1}+N_{3}+N_{5}  &\geq N_{4}+M_{0}+M_{1},\\
N_{1}+N_{5}+M_{0}  &\geq N_{2}+2 N_{4},\\
N_{0}+N_{1}+N_{5}  &\geq 2 N_{2}+N_{3},\\
N_{2}+N_{3}+M_{0}  &\geq N_{0}+2 N_{4},\\
N_{2}+M_{0}+M_{1}  &\geq N_{0}+N_{3}+N_{4},\\
N_{1}+N_{2}+N_{5}  &\geq M_{0}+2 M_{1},\\
N_{0}+N_{1}+N_{5}  &\geq N_{2}+2 M_{1},\\
2 N_{2}+M_{0}  &\geq N_{0}+N_{4}+M_{1},\\
N_{0}+N_{3}+M_{0} &\geq N_{2}+2 M_{1},\\
2 N_{2}+M_{0}  &\geq N_{0}+2 N_{3},\\
N_{0}+N_{4}+M_{0} &\geq N_{2}+N_{3}+M_{1},\\
N_{0}+N_{3}+N_{4} &\geq 3 M_{1},\\
2 N_{2}+N_{4} &\geq 3 M_{1}.
\end{align*}
For $N_i = N(i,11,11n+9)$ and $M_i = M(i,11,11n+9)$ we have
\begin{align*}
2 N_{1}+M_{1}&\geq N_{0}+N_{3}+N_{4},\\
N_{0}+N_{3}+N_{4} &\geq N_{2}+N_{5}+M_{1},\\
M_{0} + 2 M_{1} &\geq N_{2}+N_{3}+N_{4},\\
N_{0}+N_{5}+M_{0} &\geq N_{2}+N_{3}+N_{4},\\
N_{1}+N_{2}+N_{5} &\geq M_{0} + 2 M_{1},\\
N_{0}+N_{3}+N_{4} &\geq 3 M_{1},\\
N_{0}+2 M_{0} &\geq N_{1}+N_{2}+N_{5},\\
N_{0}+N_{2}+N_{5} &\geq 3 M_{1},\\
N_{0}+2 M_{0} &\geq N_{1}+N_{3}+N_{4},\\
N_{2}+M_{0}+M_{1} &\geq N_{1}+N_{3}+N_{4},\\
N_{0}+N_{2}+M_{0} &\geq N_{1}+2 M_{1},\\
N_{0}+N_{2}+M_{0} &\geq 2 N_{1}+N_{5}.
\end{align*}
For $N_i = N(i,11,11n+10)$ and $M_i = M(i,11,11n+10)$ we have
\begin{align*}
N_{0}+N_{2}+N_{3} &\geq N_{1}+N_{4}+M_{0}, \\
2 M_{0}+M_{3} &\geq N_{1}+N_{4}+N_{5}, \\
N_{2}+N_{3}+M_{3} &\geq N_{1}+N_{4}+N_{5}, \\
N_{4}+M_{0}+M_{3} &\geq N_{0}+2 N_{5}, \\
N_{2}+N_{3}+M_{3} &\geq N_{0}+N_{5}+M_{0}, \\
2 M_{0}+M_{3} &\geq N_{2}+N_{3}+N_{4}, \\
N_{0}+N_{1}+N_{5} &\geq N_{2}+N_{3}+N_{4}, \\
M_{0}+2 M_{3}  &\geq N_{0}+N_{4}+N_{5}, \\
2 N_{1}+N_{5} &\geq N_{2}+N_{3}+M_{0},\\
N_{1}+2 M_{3}  &\geq N_{0}+N_{5}+M_{0},\\
2 N_{1}+N_{5} &\geq 3 M_{0}.
\end{align*}
\end{corollary}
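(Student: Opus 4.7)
The plan is to prove each of the six-term inequalities in Corollary \ref{corollary:inequallevel6} by reducing it, residue class by residue class, to the same dissection-plus-positivity framework used for Theorem \ref{theorem:rankcrankbasicineq}. For a fixed residue class $11n+r$ (with $r \in \{0,1,\ldots,10\}\setminus\{6\}$), Theorems \ref{theorem:crankdissection} and \ref{theorem:rankdissection} give an explicit $11$-dissection of $D_C(a,11)$ and $D(a,11)$; reading off the $q^r$-component yields a closed formula for each $M(i,11,11n+r)-p(11n+r)/11$ and $N(i,11,11n+r)-p(11n+r)/11$ as the Fourier coefficient of a concrete combination of the building blocks $v_{11}$, $\Theta$, and (for the rank) the mock part $G_{11}$. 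Because every inequality in the corollary is weight-balanced (total coefficient $3$ on each side), the uniform $p(n)/11$ contributions cancel after substitution, and the task reduces to showing non-negativity of the Fourier coefficients of an explicit theta expression.

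Concretely, I would proceed in three steps for each target. First, I would tabulate the $q^r$-components of the two dissections so that each $N_i$ and $M_i$ appearing in the target carries an associated theta-quotient expression drawn from the summands of $v_{11}$ and $\Theta$. Second, I would substitute these expressions into $\mathrm{LHS}-\mathrm{RHS}$ and collect terms into a single combination of theta quotients; an important sanity check at this stage is that the mock contributions coming from $G_{11}$ must cancel, since the corollary states a genuine inequality between Fourier coefficients of holomorphic theta products whose truth cannot rely on the mock part. Third, I would invoke the positivity criteria developed in Section \ref{subsection:positivitytechniques} to verify that every Fourier coefficient of the resulting theta expression is non-negative. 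When the collected expression happens to coincide with (a non-negative rational combination of) the basic inequalities of Theorem \ref{theorem:rankcrankbasicineq}, the certificate is just a short linear algebra computation; otherwise one appeals directly to positivity of the relevant theta products, using the symmetries $N_i=N_{11-i}$, $M_i=M_{11-i}$ already built into the notation.

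The main obstacle is volume rather than depth: ten residue classes and roughly ten six-term targets per class mean close to one hundred certificates to produce. No single certificate is deep --- each should involve only a handful of theta quotients already handled in Section \ref{subsection:positivitytechniques}, and the cancellation of the mock term is automatic in every case --- but the bookkeeping must be done carefully, with the $q^r$-extraction performed consistently across all the $v_{11}$ and $\Theta$ summands and with mock contributions from the distinct $N_i$ tracked explicitly so that their cancellation is visible. In the full write-up I would collect these certificates in Section \ref{section:fullcalc}, organized residue class by residue class and following the order in which the inequalities appear in the statement.
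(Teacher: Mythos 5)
Your plan is essentially the paper's own proof: for each residue $m\neq 6$ one forms the balanced linear combination of the dissection elements $Q_{a,m}$ and $Q^{C}_{a,m}$ (so the $p(n)/11$ terms drop out and the mock pieces cancel for these specifically chosen combinations), rewrites it as an explicit bracket $[c_1,\dots,c_5;c_6]_m$, and certifies non-negativity of its coefficients via the positivity results of Section \ref{subsection:positivitytechniques} (Proposition \ref{proposition:maincomparison}, Lemma \ref{lemma:addcomparison}, Lemma \ref{lemma:blockscrankpos}), with the full list of certificates tabulated in Section \ref{section:fullcalc}. The only caveat is that the cancellation of the $G_{11}$ contributions is not an automatic consequence of the statement's truth but a property of these particular coefficient choices, and it must be (and is, in your step two) verified explicitly in each case.
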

\begin{remark} Note that the four-term and six-term inequalities in Corollary \ref{corollary:inequallevel2level4} and Corollary \ref{corollary:inequallevel6} are selected so that
they are corollaries of inequalities in Theorem \ref{theorem:rankcrankbasicineq} but they are not corollaries of Conjecture \ref{conjecture:rankcrankconj} and they cannot be obtained from each other using Conjecture \ref{conjecture:rankcrankconj}. 
\end{remark}
For residue $6$ we have the following results.
\begin{theorem} \label{theorem:inequalrank6basis} Consider $n\geq 0$. For $N_i = N(i,11,11n+6)$ we have
\begin{align*}
2N_1+N_2+2N_4 &\geq 2N_3+3N_5,\\
2N_0+N_2+2N_5 &\geq 2N_1+N_3+2N_4,\\
N_0+N_1+3N_3+N_4 &\geq 4N_2+2N_5,\\
N_0 + 6N_1+4N_2 &\geq 2N_3+5N_4+4N_5.
\end{align*}
\end{theorem}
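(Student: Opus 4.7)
The plan is to exploit the observation that residue $6 \pmod{11}$ is uniquely captured by the $\sum_{m=0}^{10}\Theta_{a,m}(q^{11})q^m$ part of the rank dissection in Theorem \ref{theorem:rankdissection}. Indeed, the $q$-powers $\{0,1,2,3,4,5,7,19,9,10\}$ occurring in $v_{11}$ reduce modulo $11$ to the complement of $\{6\}$ in $\{0,1,\ldots,10\}$; and after absorbing the leading $x^{-1}$ in each $g(x;q^{121})$, the five summands of $G_{11}$ carry $q$-powers congruent to $0,4,7,9,10\pmod{11}$ (since the $g(x;q^{121})$-series only produces exponents divisible by $11$). Hence residue $6$ is missed by both $v_{11}$ and $G_{11}$, and for every $n\geq 0$ and every $a\in\{0,1,\ldots,5\}$,
\begin{equation*}
N(a,11,11n+6) - \frac{p(11n+6)}{11} \;=\; [q^n]\,\Theta_{a,6}(q).
\end{equation*}

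Each of the four inequalities has rank-coefficients summing to zero (e.g.\ $2+1+2-2-3=0$ in the first), so the $p(11n+6)/11$ term cancels. Using the linearity of $\Theta(a_1,\ldots,a_5)$ in its arguments together with the tabulated vectors for $\Theta_{0,6},\ldots,\Theta_{5,6}$, a direct computation recasts the four inequalities as the non-negativity of the $q$-coefficients of
\begin{equation*}
\Theta(0,0,0,0,11),\;\; \Theta(0,0,0,11,-11),\;\; \Theta(0,0,11,-11,0),\;\; \Theta(0,11,-11,0,0),
\end{equation*}
respectively. Pulling out the common factor $11\,J_{11}^6/J_1^2$, these become statements about one single theta-quotient (from the first inequality) and three two-term differences of the form $q/(P_iP_j^2)-q/(P_kP_\ell^2)$ with the $\{i,j,k,\ell\}$ differing in only one index.

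The task therefore reduces to proving the non-negativity of these four explicit theta expressions. For the single-term first case, one can verify positivity directly by recognising $J_{11}^6/(J_1^2 P_2 P_3^2)$ as a generating function with non-negative coefficients after a Jacobi triple product manipulation, or simply by invoking the theta-product positivity criteria developed in Section \ref{subsection:positivitytechniques}. The three mixed-sign differences constitute the substance of the theorem and are the main obstacle: the strategy is to combine each pair over a common theta denominator so that the numerator collapses, via a Ramanujan-style theta identity, to a single theta product of definite sign, and then apply the positivity techniques of Section \ref{subsection:positivitytechniques} to the resulting quotient. Once each of the four theta combinations is shown to have non-negative Fourier coefficients, extraction of $[q^n]$ yields all four inequalities of Theorem \ref{theorem:inequalrank6basis} for every $n\geq 0$ simultaneously.
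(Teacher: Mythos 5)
Your proposal is correct and is essentially the paper's own proof: residue $6$ is hit only by the $\Theta_{a,6}$ terms of the dissection, and your four linear combinations of the $\Theta_{a,6}$ reduce to exactly the paper's targets $\Theta(0,0,0,0,11)$, $\Theta(0,0,0,11,-11)$, $\Theta(0,0,11,-11,0)$, $\Theta(0,11,-11,0,0)$. The one step you leave as a ``strategy'' --- collapsing each mixed-sign difference to a single theta product of definite sign --- is precisely Proposition \ref{proposition:maincomparisonres6}, already proved in the section you cite via the three-term Weierstrass relations (e.g.\ $\Theta(0,0,0,1,-1)=\frac{J_{11}^2}{J_1^3}\frac{q^3P_1^3}{P_5}\geq 0$), so your outline closes exactly as the paper's argument does.
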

\begin{corollary} \label{corollary:ineqrank6level2} Consider $n\geq 0$. For $N_i = N(i,11,11n+6)$ we have
\begin{equation*}
N_0 \geq \frac{p(11n+6)}{11} \ \ \text{and} \ \ \frac{p(11n+6)}{11} \geq N_5.
\end{equation*}
\end{corollary}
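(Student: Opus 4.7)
The plan is as follows. By the symmetry $N(a,11,n) = N(11-a,11,n)$, we have $p(11n+6) = N_0 + 2(N_1+N_2+N_3+N_4+N_5)$, so the two claims are equivalent to $5 N_0 \geq N_1 + N_2 + N_3 + N_4 + N_5$ and $N_0 + 2(N_1+N_2+N_3+N_4) \geq 9 N_5$ respectively. For the first I would add the first three inequalities of Theorem \ref{theorem:inequalrank6basis} with weights $(1, 2, 1)$; a routine coefficient comparison shows the combination reduces to exactly $5 N_0 - (N_1 + N_2 + N_3 + N_4 + N_5) \geq 0$.

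The second inequality seems to require more than the four inequalities of Theorem \ref{theorem:inequalrank6basis} alone (a linear-programming check indicates that no non-negative combination of them delivers it, consistent with the target being tight at $n=0$ while the fourth inequality is strict there). Instead I would invoke the rank dissection of Theorem \ref{theorem:rankdissection}. The key observation is a residue count: at residue $6$ modulo $11$, neither $G_{11}$ nor $v_{11}$ contributes (all their $q$-powers reduce to other residues), and only the theta block $\Theta_{a,6}(q^{11})\,q^6$ survives, giving
\begin{equation*}
N(a,11,11n+6) - \frac{p(11n+6)}{11} = [\Theta_{a,6}(q)]_n.
\end{equation*}

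Specializing to $a = 5$ yields $\Theta_{5,6}(q) = \Theta(-1,0,-1,0,-1) = -\frac{J_{11}^6}{J_1^2}\Big(\frac{q^2}{P_4 P_5^2} + \frac{q}{P_1 P_4^2} + \frac{q}{P_2 P_3^2}\Big)$, so the claim reduces to showing that each of the three theta-product ratios on the right has a power-series expansion with non-negative coefficients. This positivity step is the main obstacle, and I would discharge it using the positivity techniques developed in Section \ref{subsection:positivitytechniques}; once each summand is certified non-negative, $\Theta_{5,6}(q)$ has non-positive coefficients, and $N_5 \leq p(11n+6)/11$ follows.
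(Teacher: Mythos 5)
Your proposal is correct, and its second half is essentially the paper's own argument: the paper proves both bounds directly from the residue-$6$ dissection, namely $Q_{0,6}(q)=\Theta(0,0,1,1,-1)\geq 0$ and $-Q_{5,6}(q)=\Theta(1,0,1,0,1)\geq 0$, the latter being exactly your reduction of $p(11n+6)/11\geq N_5$ to the termwise non-negativity of $\frac{J_{11}^6}{J_1^2}\cdot\frac{q^2}{P_4P_5^2}$, $\frac{J_{11}^6}{J_1^2}\cdot\frac{q}{P_1P_4^2}$, $\frac{J_{11}^6}{J_1^2}\cdot\frac{q}{P_2P_3^2}$. The positivity step you flag as the main obstacle is not actually an obstacle in the paper's framework: after rewriting via $P_1P_2P_3P_4P_5=J_1J_{11}^4$, each of these terms is of the form covered by Lemma \ref{lemma:posthetablock} (see the remark immediately after it, which records that every term of $\Theta$ has non-negative Fourier coefficients), so citing that lemma closes your argument. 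For the first bound your route differs from the paper's: you combine the first three inequalities of Theorem \ref{theorem:inequalrank6basis} with weights $(1,2,1)$, and your coefficient check is right ($5N_0\geq N_1+\cdots+N_5$, which is equivalent to $N_0\geq p(11n+6)/11$ by the rank symmetry $N(a,11,n)=N(11-a,11,n)$); this is legitimate and non-circular, but it is a detour, since those three inequalities are themselves proved from $\Theta(0,0,0,0,11)\geq0$, $\Theta(0,0,0,11,-11)\geq0$, $\Theta(0,0,11,-11,0)\geq0$, whose sum with your weights is just $11\,Q_{0,6}(q)\geq 0$ — the paper's one-line application of Proposition \ref{proposition:maincomparisonres6} together with termwise positivity. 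Your linear-programming remark is also consistent with the structure: in $\Theta$-coordinates the four inequalities of Theorem \ref{theorem:inequalrank6basis} all have vanishing first coordinate, while $-Q_{5,6}$ corresponds to $(1,0,1,0,1)$, so the second bound indeed cannot be a combination of them and genuinely needs the dissection, as you use.
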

\begin{corollary} \label{corollary:ineqrank6level4level6} Consider $n\geq 0$. For $N_i = N(i,11,11n+6)$ we have
\begin{align*}
N_{0}+N_{3}  &\geq N_{1}+N_{4},\\
N_{1}+N_{2}+N_{4} &\geq N_{3}+2 N_{5},\\
N_{2}+2 N_{3}  &\geq N_{1}+N_{4}+N_{5},\\
N_{0}+N_{1}+N_{4}  &\geq N_{2}+2 N_{5},\\
 3 N_{3}  &\geq N_{2}+2 N_{5},\\
N_{0}+2 N_{3}  &\geq 2 N_{2}+N_{5}.
\end{align*}
\end{corollary}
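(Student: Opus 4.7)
My plan is to prove each of the six inequalities by reducing it to the coefficient-wise non-negativity of a single expression in the family $\Theta(a_1,a_2,a_3,a_4,a_5)$ of Definition~\ref{definition:v11G11Theta}, and then exhibiting that expression as a manifestly non-negative combination of simpler building blocks whose positivity is already available.

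First I would observe that every inequality in the corollary has equal coefficient sums on its two sides ($2$ for the first, $3$ for the rest), so the $p(11n+6)/11$ contributions cancel on subtraction and the inequality becomes the coefficient-wise non-negativity of a $\mathbb{Z}$-linear combination of the deviations $N(a,11,11n+6)-p(11n+6)/11$. Inspecting exponents modulo $11$ one sees that neither $v_{11}$ (whose exponents are $0,1,2,3,4,5,7,8,9,10$, using $19\equiv 8$) nor $G_{11}$ (whose exponents lie in $\{0,4,7,9,10\}$) contributes at residue~$6$. Thus by Theorem~\ref{theorem:rankdissection} the residue-$6$ piece of $D(a,11)$ is precisely $\Theta_{a,6}(q^{11})q^6$, and the $\mathbb{Z}$-linearity of $\Theta$ in its arguments turns each inequality into an assertion $\Theta(c_1,c_2,c_3,c_4,c_5)\geq 0$ for a computable tuple $(c_1,\dots,c_5)$; for instance $N_0+N_3\geq N_1+N_4$ reduces to $\Theta(2,0,3,3,-6)\geq 0$ and $3N_3\geq N_2+2N_5$ reduces to $\Theta(4,0,6,-5,-1)\geq 0$.

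Second, I would match each such target against the four basic expressions $\Theta(0,0,0,0,1)$, $\Theta(0,0,0,1,-1)$, $\Theta(0,0,1,-1,0)$, $\Theta(0,1,-1,0,0)$, whose positivity follows (up to a factor of $11$) by applying the same reduction to inequalities A--D of Theorem~\ref{theorem:inequalrank6basis}, together with the additional building block $\Theta(1,0,0,0,0)=q^2 J_{11}^6/(J_1^2P_4P_5^2)$. A short linear-algebra calculation then exhibits each target as a non-negative combination of these five pieces: for example $\Theta(2,0,3,3,-6)=2\,\Theta(1,0,0,0,0)+6\,\Theta(0,0,0,1,-1)+3\,\Theta(0,0,1,-1,0)$, while $\Theta(0,0,5,-2,3)$ (corresponding to $N_0+N_1+N_4\geq N_2+2N_5$) decomposes as $6\,\Theta(0,0,0,0,1)+3\,\Theta(0,0,0,1,-1)+5\,\Theta(0,0,1,-1,0)$ and so already lies in the span of the four pieces from Theorem~\ref{theorem:inequalrank6basis}. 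Analogous decompositions handle the remaining inequalities.

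The main obstacle is the positivity of the single extra building block $\Theta(1,0,0,0,0)$, which is needed for five of the six inequalities but is not a consequence of Theorem~\ref{theorem:inequalrank6basis}. Since $J_{11}^6$ has signed Fourier expansion, the non-negativity of $q^2 J_{11}^6/(J_1^2P_4P_5^2)$ must come from cancellations inside the combined theta product; this is precisely the sort of statement addressed by the theta-product positivity machinery developed in Section~\ref{subsection:positivitytechniques}. Once that one positivity is in hand, the explicit non-negative decompositions just described make the corollary immediate.
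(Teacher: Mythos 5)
Your proposal is correct and is essentially the paper's own argument: at residue $6$ only the $\Theta_{a,6}$ pieces survive, each inequality reduces to $\Theta(c_1,\dots,c_5)\geq 0$ (your tuples, e.g.\ $\Theta(2,0,3,3,-6)$ and $\Theta(4,0,6,-5,-1)$, agree with the paper's), and positivity is obtained from the chain of comparisons among the five basic terms. The only cosmetic difference is sourcing: the paper applies Proposition \ref{proposition:maincomparisonres6} directly, while you route through Theorem \ref{theorem:inequalrank6basis}; since that theorem is exactly equivalent to the non-negativity of $\Theta(0,0,0,0,11)$, $\Theta(0,0,0,11,-11)$, $\Theta(0,0,11,-11,0)$, $\Theta(0,11,-11,0,0)$, this is the same content and involves no circularity. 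One correction: the item you single out as ``the main obstacle,'' namely $\Theta(1,0,0,0,0)\geq 0$, is not an open step requiring any cancellation analysis of $J_{11}^6$. By the remark following Lemma \ref{lemma:posthetablock}, every individual term of $\Theta$ has non-negative coefficients; concretely, $\Theta(1,0,0,0,0)=\frac{q^{2}}{J_1^{2}}\cdot\frac{J_{11}^{2}}{P_4}\cdot\Bigl(\frac{J_{11}^{2}}{P_5}\Bigr)^{2}\geq 0$, since $1/J_1$ is the partition generating function and each factor $J_{11}^2/P_a$ is non-negative by Lemma \ref{lemma:blockscrankpos}. With that observation your explicit decompositions (e.g.\ $\Theta(2,0,3,3,-6)=2\Theta(1,0,0,0,0)+6\Theta(0,0,0,1,-1)+3\Theta(0,0,1,-1,0)$), whose required partial-sum positivity indeed holds for all six tuples, complete the proof exactly as in the paper.
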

We prove Theorem \ref{theorem:rankcrankbasicineq}, Corollary \ref{corollary:inequallevel2level4}, Corollary \ref{corollary:inequallevel6}, Theorem \ref{theorem:inequalrank6basis}, Corollary \ref{corollary:ineqrank6level2} and Corollary \ref{corollary:ineqrank6level4level6} in Section \ref{subsection:rankcrankinequal}. In the spirit of Corollary \ref{corollary:inequallevel2level4} and Corollary \ref{corollary:ineqrank6level2} we introduce new conjectural rank-crank inequalities in Section \ref{subsection:conjinequal}. 

Recall 
\begin{equation*}
P_i := J_{i,11}.
\end{equation*}
Define the following sums of theta quotients.
\begin{definition} \label{definition:mainbracket} We define
\begin{equation*}
[c_1,c_2,c_3,c_4,c_5] := \frac{J_{11}^2}{J_1^3} \Big(c_1P_5^2 P_4 + c_2 q^2 P_1^2 P_3 + c_3 q P_4^2 P_1 + c_4 q P_2^2P_5 + c_5 q P_3^2P_2\Big).
\end{equation*}
\end{definition}
\begin{definition} \label{definition:residuebrackets} We define
\begin{align*}
[c_1, c_2, c_3, c_4, c_5; c_6]_{0} &:=  \frac{1}{P_1} [c_1, c_2, c_3, c_4, c_5] + c_{6} \frac{J_{11}^2}{J_{1}^3}\frac{qP_1P_3P_4P_5}{P_2^2},\\
[c_1, c_2, c_3, c_4, c_5; c_6]_{1} &:= \frac{P_5}{P_2P_3} [c_1, c_2, c_3, c_4, c_5] + c_6 \frac{J_{11}^2}{J_{1}^3}  \frac{qP_2^2P_4}{P_1},\\
[c_1, c_2, c_3, c_4, c_5; c_6]_{2} &:=  \frac{P_3}{P_1P_4} [c_1, c_2, c_3, c_4, c_5] + c_6 \frac{J_{11}^2}{J_{1}^3}  \frac{q^3P_1^2P_2}{P_5},\\
[c_1, c_2, c_3, c_4, c_5; c_6]_{3} &:= \frac{P_2}{P_1P_3}  [c_1, c_2, c_3, c_4, c_5] + c_6 \frac{J_{11}^2}{J_{1}^3} \frac{qP_3^2P_5}{P_4},\\
[c_1, c_2, c_3, c_4, c_5; c_6]_{4} &:= \frac{1}{P_2}   [c_1, c_2, c_3, c_4, c_5] + c_6 \frac{J_{11}^2}{J_{1}^3}  \frac{q^{2}P_1P_2P_3P_5}{P_4^2},\\
[c_1, c_2, c_3, c_4, c_5; c_6]_{5} &:= \frac{P_4}{P_2P_5}   [c_1, c_2, c_3, c_4, c_5] + c_6 \frac{J_{11}^2}{J_{1}^3}  \frac{qP_1P_5^2}{P_3} ,\\
[c_1, c_2, c_3, c_4, c_5; c_6]_{7} &:= \frac{1}{P_3}   [c_1, c_2, c_3, c_4, c_5] + c_{6} \frac{J_{11}^2}{J_{1}^3}  \frac{q^2P_1P_2P_3P_4}{P_5^2},\\
[c_1, c_2, c_3, c_4, c_5; c_6]_{8} &:=\frac{ qP_1}{P_4P_5}   [c_1, c_2, c_3, c_4, c_5] + c_6 \frac{J_{11}^2}{J_{1}^3}  \frac{P_3P_4^2}{P_2},\\
[c_1, c_2, c_3, c_4, c_5; c_6]_{9} &:= \frac{1}{P_4}   [c_1, c_2, c_3, c_4, c_5] + c_6 \frac{J_{11}^2}{J_{1}^3}  \frac{qP_1P_2P_4P_5}{P_3^2},\\
[c_1, c_2, c_3, c_4, c_5; c_6]_{10} &:= \frac{1}{P_5}   [c_1, c_2, c_3, c_4, c_5] + c_6 \frac{J_{11}^2}{J_{1}^3}  \frac{q^{-1}P_2P_3P_4P_5}{P_1^2}.
\end{align*}
\end{definition}
Recall the following notation:
\begin{equation*}
\Theta(a_1,a_2,a_3,a_4,a_5) := \frac{J_{11}^6}{J_{1}^2} \Big[a_1\frac{q^{2}}{P_{4}P_{5}^2}+a_2\frac{1}
{P_{1}^2P_{3}}+a_3\frac{q}{P_{1}P_{4}^2}+a_4\frac{q}{P_{2}^2P_{5}}+a_5\frac{q}{P_{2}P_{3}^2}\Big].
\end{equation*}
Rank and crank moments are defined as \cite{AtG}
\begin{align*} 
N_{k}(n) &:= \sum_{m = -\infty}^{\infty} m^k N(m,n),\\
M_{k}(n) &:= \sum_{m = -\infty}^{\infty} m^k M(m,n)
\end{align*}
for even $k \in \N$. Define $T_{a,m}(q)$ to be the elements of the $11$-dissection of the generating functions for rank and crank moments:
\begin{align*}
\sum_{n=0}^{\infty} N_k(n)q^n &=: \sum_{m=0}^{10} T_{k,m}(q^{11})q^m,\\
\sum_{n=0}^{\infty} M_k(n)q^n &=: \sum_{m=0}^{10} T^{C}_{k,m}(q^{11})q^m.
\end{align*}
The reformulation of this definition then reads
\begin{equation}\label{definition:disselemrankmoment}
T_{k,m}(q) := \sum_{n=0}^{\infty} N_k(11n+m)q^n,
\end{equation} 
\begin{equation}\label{definition:disselemcrankmoment}
T^{C}_{k,m}(q) := \sum_{n=0}^{\infty} M_k(11n+m)q^n.
\end{equation}

As an another application of Theorem \ref{theorem:crankdissection} and Theorem \ref{theorem:rankdissection} we derive new congruences for the rank and crank moments and for the Andrews’ smallest parts function $\operatorname{spt}(n)$, where $\operatorname{spt}(n)$ denotes the number of smallest parts in the partitions of $n$. For example, the partitions of the number $4$ are $(4),(3,1),(2,2),(2,1,1),(1,1,1,1)$ with $1,1,2,2,4$ being the number of smallest parts respectively, so we see $\operatorname{spt}(4) = 10$. Garvan \cite[Theorem 5.1]{G10} considered congruences for $T_{k,6}(q)$ in terms of Eisenstein series. We will provide congruences for $T^{C}_{k,m}(q)$ for residues $m \neq 6$ and congruences for $T_{k,m}(q)$ for residues $m \in \{1,2,3,5,6,8\}$ in terms of theta quotients  $\Theta(a_1,a_2,a_3,a_4,a_5)$ from Definition \ref{definition:v11G11Theta} and $[c_1,c_2,c_3,c_4,c_5;c_6]_{m}$ from Definition \ref{definition:residuebrackets}.

\begin{theorem} \label{theorem:sptcong} Using the notation of Definition \ref{definition:v11G11Theta} and Definition \ref{definition:residuebrackets}, we have
\begin{align*}
\sum_{n=0}^{\infty} \operatorname{spt}(11n+1) q^n &\equiv [1,5,-4,1,-5;1]_{1} \Mod {11},\\
\sum_{n=0}^{\infty}\operatorname{spt}(11n+2)q^n &\equiv [3,-5,3,-2,5;-5]_{2} \Mod {11},\\
\sum_{n=0}^{\infty}\operatorname{spt}(11n+3)q^n &\equiv [5,2,-2,-4,4;4]_{3} \Mod {11},\\
\sum_{n=0}^{\infty}\operatorname{spt}(11n+5)q^n &\equiv [3,1,3,4,-4;-3]_{5} \Mod {11},\\
\sum_{n=0}^{\infty}\operatorname{spt}(11n+6)q^n &\equiv \Theta(2,4,5,3,1) \Mod {11},\\
\sum_{n=0}^{\infty}\operatorname{spt}(11n+8)q^n &\equiv [-1,-2,2,-1,3;2]_{8} \Mod {11}.
\end{align*}
\end{theorem}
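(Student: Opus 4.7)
The plan is to derive all six congruences from Andrews' classical identity
$$2\operatorname{spt}(n)=M_{2}(n)-N_{2}(n)$$
together with the $11$-dissections of Theorems \ref{theorem:crankdissection} and \ref{theorem:rankdissection}. First, because $m^{2}\equiv(m\bmod 11)^{2}\pmod{11}$, we have
$$N_{2}(n)\equiv\sum_{a=0}^{10}a^{2}N(a,11,n)\pmod{11},\qquad M_{2}(n)\equiv\sum_{a=0}^{10}a^{2}M(a,11,n)\pmod{11}.$$
Exploiting the symmetries $N(a,11,n)=N(11-a,11,n)$, $M(a,11,n)=M(11-a,11,n)$ together with $a^{2}+(11-a)^{2}\equiv 2a^{2}\pmod{11}$, the sum folds to give
$$\operatorname{spt}(n)\equiv\sum_{a=1}^{5}a^{2}\bigl(M(a,11,n)-N(a,11,n)\bigr)\pmod{11},$$
and since the $p(n)/11$-terms cancel in the difference we obtain the generating-function congruence
$$\sum_{n\ge 0}\operatorname{spt}(n)q^{n}\equiv\sum_{a=1}^{5}a^{2}\bigl(D_{C}(a,11)-D(a,11)\bigr)\pmod{11}.$$

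The next step is to extract the $q^{11n+m}$-component using Theorems \ref{theorem:crankdissection} and \ref{theorem:rankdissection}. The crucial observation is that the $q$-exponents appearing in the mock piece $G_{11}(b_{0},b_{4},b_{7},b_{9},b_{10})$ are $22,\,37,\,40,\,31,\,-1,\,10$, which cover only the residues $\{0,4,7,9,10\}$ modulo $11$. Hence the mock contribution vanishes for $m\in\{1,2,3,5,6,8\}$, leaving a pure theta-product expression --- which is precisely the set of residues addressed by the theorem.

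For the case $m=6$ the exponents $0,1,2,3,4,5,7,19,9,10$ appearing in $v_{11}$ also miss the class $6\pmod{11}$, so only the pieces $\Theta_{a,6}(q^{11})q^{6}$ of $D(a,11)$ contribute. Using the linearity of $\Theta(\cdot,\ldots,\cdot)$ in its five arguments and the explicit formulas for $\Theta_{a,6}$ in Theorem \ref{theorem:rankdissection}, one computes
$$\sum_{a=1}^{5}a^{2}\Theta_{a,6}(q)=\Theta(-13,-15,-5,-3,-1)\equiv\Theta(-2,-4,6,8,10)\pmod{11},$$
and negating (we are computing $D_{C}-D$) yields $\Theta(2,4,5,3,1)$, matching the claim.

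For $m\in\{1,2,3,5,8\}$ both the $v_{11}$-pieces and the $\Theta_{a,m}$-pieces contribute. Comparing the $v_{11}$-arguments between $D_{C}(a,11)$ and $D(a,11)$ in Theorems \ref{theorem:crankdissection} and \ref{theorem:rankdissection}, every component of their difference is divisible by $11$, so the factor $1/11$ in the definition of $v_{11}$ is cleared and leaves an explicit integer contribution. Combining this with the $a^{2}$-weighted sum of the rank pieces $\Theta_{a,m}$ (to be provided explicitly in Section \ref{section:devrank}) and applying the substitution $q^{11}\mapsto q$ produces, after reduction modulo $11$, the bracket expressions stated in the theorem. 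The main technical obstacle is reconciling the prefactor $J_{121}^{2}/11$ appearing in $v_{11}$ with the prefactor $J_{11}^{2}/J_{1}^{3}$ appearing in Definition \ref{definition:residuebrackets}; this uses the $11$-dissection identity
$$J_{1}=\frac{P_{1}P_{2}P_{3}P_{4}P_{5}}{J_{11}^{4}},$$
obtained by grouping $(q;q)_{\infty}$ according to residue classes modulo $11$, together with the classical congruence $J_{1}^{11}\equiv J_{11}\pmod{11}$. The remainder of the proof is careful but routine coefficient bookkeeping for each of the five remaining residue classes.
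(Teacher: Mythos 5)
Your proposal is correct and rests on the same two pillars as the paper's proof — Andrews' spt identity and the dissections of Theorems \ref{theorem:crankdissection} and \ref{theorem:rankdissection} — but it is organized differently. The paper uses $\operatorname{spt}(n)=np(n)-\tfrac12 N_2(n)$ and then simply quotes Theorem \ref{theorem:pcong} (for the $np(n)$ part, which comes from the crank dissection) and Theorem \ref{theorem:rankmomentcong} (for $T_{2,m}$), whereas you use the equivalent form $2\operatorname{spt}(n)=M_2(n)-N_2(n)$ (Andrews combined with Dyson's $M_2(n)=2np(n)$) and compute the single weighted difference $\sum_{a=1}^{5}a^2\bigl(Q^{C}_{a,m}-Q_{a,m}\bigr)$ directly from both dissections; this bypasses the two intermediate theorems at the cost of redoing their arithmetic. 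Your reductions are sound: the mock parts indeed occupy only residues $0,4,7,9,10$; your residue-$6$ computation $\sum a^2\Theta_{a,6}=\Theta(-13,-15,-5,-3,-1)$, after negation, matches the claim; and I spot-checked residues $1$, $2$, $3$ by your method — e.g.\ for $m=3$ the crank-minus-rank $v_{11}$ coefficients at that slot are $(0,0,0,11,0,0)$, giving $9\,[1,-1,-1,-1,-1;0]_{3}-\sum_{a=1}^{5}a^2\Theta_{a,3}=[5,24,9,7,4;4]_{3}\equiv[5,2,-2,-4,4;4]_{3}\pmod{11}$ — so the remaining bookkeeping for the other residues is genuinely routine. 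One correction: the ``prefactor reconciliation'' you flag as the main obstacle does not use $J_1^{11}\equiv J_{11}\pmod{11}$ (which is not needed anywhere); the relevant tool is O'Brien's identity (\ref{identity:obrien}), i.e.\ $\frac{J_{11}^2\,(\cdot)}{(\cdot)}=[1,-1,-1,-1,-1;0]_{m}$, which is already built into the Section \ref{section:devrank} formulas for $Q_{a,m}$: for $m=1,2$ the $\tfrac1{11}$-pieces of crank and rank cancel identically, and for $m=3,5,8$ the leftover integer multiple of the basic theta quotient is converted to bracket form precisely by that identity.
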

\begin{theorem} \label{theorem:rankmomentcong} Using the notation of Definition \ref{definition:v11G11Theta} and Definition \ref{definition:residuebrackets}, for residue $1$ modulo $11$ we have
\begin{align*}
T_{2,1}(q) &\equiv [0,-1,-5,-4,-3;-2]_{1} \Mod {11},\\
T_{4,1}(q) &\equiv 0 \Mod {11},\\
T_{6,1}(q) &\equiv [0,-1,-4,2,5;3]_{1} \Mod {11},\\
T_{8,1}(q) &\equiv [0,5,1,4,-4;-3]_{1} \Mod {11}.
\end{align*}
For residue $2$ modulo $11$ we have
\begin{align*}
T_{2,2}(q) &\equiv [2,2,-3,-4,4;-1]_{2} \Mod {11},\\
T_{4,2}(q) &\equiv [2,5,-1,0,3;1]_{2} \Mod {11},\\
T_{6,2}(q) &\equiv [2,4,-2,2,5;3]_{2} \Mod {11},\\
T_{8,2}(q) &\equiv [2,-1,2,4,1;5]_{2} \Mod {11}.
\end{align*}
For residue $3$ modulo $11$ we have
\begin{align*}
T_{2,3}(q) &\equiv [-3,0,-3,1,-4;3]_{3} \Mod {11},\\
T_{4,3}(q) &\equiv [-1,-4,2,5,4;5]_{3} \Mod {11},\\
T_{6,3}(q) &\equiv [-4,0,3,3,-2;5]_{3} \Mod {11},\\
T_{8,3}(q) &\equiv [-5,2,0,4,-5;5]_{3} \Mod {11}.
\end{align*}
For residue $5$ modulo $11$ we have
\begin{align*}
T_{2,5}(q) &\equiv [-2,5,1,-1,4;-5]_{5} \Mod {11},\\
T_{4,5}(q) &\equiv [-4,3,-3,5,-4;-2]_{5} \Mod {11},\\
T_{6,5}(q) &\equiv [-5,2,-1,0,2;1]_{5} \Mod {11},\\
T_{8,5}(q) &\equiv [4,0,-2,2,5;-3]_{5} \Mod {11}.
\end{align*}
For residue $6$ modulo $11$ we have
\begin{align*}
T_{2,6}(q) &\equiv \Theta(-4,3,1,5,-2)\Mod {11},\\
T_{4,6}(q) &\equiv \Theta(-2,-4,-5,-3,-1)\Mod {11},\\
T_{6,6}(q) &\equiv \Theta(1,5,-5,4,2)\Mod {11},\\
T_{8,6}(q) &\equiv \Theta(-5,-5,1,-2,-2)\Mod {11}.
\end{align*}
For residue $8$ modulo $11$ we have
\begin{align*}
T_{2,8}(q) &\equiv [-1,-4,-1,5,-3;-4]_{8} \Mod {11},\\
T_{4,8}(q) &\equiv [-3,5,-5,-3,-2;-5]_{8} \Mod {11},\\
T_{6,8}(q) &\equiv [3,4,3,-3,1;3]_{8} \Mod {11},\\
T_{8,8}(q) &\equiv [0,2,-1,5,-1;5]_{8} \Mod {11}.
\end{align*}
\end{theorem}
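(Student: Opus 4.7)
My plan is to deduce Theorem \ref{theorem:rankmomentcong} from Theorem \ref{theorem:rankdissection} by combining a residue-class reduction of the rank moments with the explicit $11$-dissection. Writing any integer $m = 11q+a$ with $0\le a\le 10$ gives $m^k\equiv a^k \Mod{11}$, so $N_k(n)\equiv\sum_{a=0}^{10} a^k N(a,11,n)\Mod{11}$. Because $10\nmid k$ for $k\in\{2,4,6,8\}$, the standard character-sum argument on the cyclic group $\mathbb{F}_{11}^*$ gives $\sum_{a=0}^{10}a^k\equiv 0\Mod{11}$, so $c_k:=\tfrac{1}{11}\sum_{a=0}^{10}a^k\in\Z$. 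Combining this with $\sum_n N(a,11,n)q^n=D(a,11;q)+\tfrac{1}{11J_1}$ produces the key congruence
\begin{equation*}
\sum_{n\ge 0}N_k(n)\,q^n\;\equiv\;\sum_{a=0}^{10}a^k\,D(a,11;q)\;+\;\frac{c_k}{J_1}\Mod{11}.
\end{equation*}

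\textbf{Step 2 (dissection and residue extraction).} I now substitute the dissection of $D(a,11;q)$ from Theorem \ref{theorem:rankdissection} and extract the residue-$m$ piece for each $m\in\{1,2,3,5,6,8\}$, then apply $q^{11}\mapsto q$. The $G_{11}$-piece lives only at residues $\{0,4,7,9,10\}$ and therefore never contributes. The $v_{11}$-piece has no residue-$6$ term, while at residues $1,2,3,5,8$ it produces after substitution exactly the prefactors $P_5/(P_2P_3)$, $P_3/(P_1P_4)$, $P_2/(P_1P_3)$, $P_4/(P_2P_5)$, $qP_1/(P_4P_5)$ that lead the bracket notation in Definition \ref{definition:residuebrackets}.

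\textbf{Step 3 (finishing the two cases).} For $m=6$, Ramanujan's congruence $p(11n+6)\equiv 0 \Mod{11}$ kills the $c_k/J_1$ contribution and $v_{11}$ contributes nothing, leaving $\sum_a a^k\Theta_{a,6}(q)$. Using the symmetry $\Theta_{a,6}=\Theta_{11-a,6}$ inherited from $N(a,11,n)=N(11-a,11,n)$ together with $(11-a)^k\equiv a^k \Mod{11}$ for even $k$, this collapses to $2\sum_{a=1}^{5}a^k\Theta_{a,6}(q)\Mod{11}$, which by linearity of $\Theta(\cdot)$ in its arguments is a single $\Theta(\cdot)$ value (a quick check with $k=2$ indeed yields $\Theta(7,3,1,5,9)\equiv\Theta(-4,3,1,5,-2)$). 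For $m\in\{1,2,3,5,8\}$, the $v_{11}$ and $\Theta_{a,m}$ contributions, reshaped using the identities $P_1P_2P_3P_4P_5 = J_1 J_{11}^4$ and $J_1^{11}\equiv J_{11}\Mod{11}$, assemble into a multiple of the leading prefactor times $[c_1,\dots,c_5]$ of Definition \ref{definition:mainbracket}, while the $c_k/J_1$ correction becomes modulo $11$ the $c_6$-piece of $[c_1,\dots,c_5;c_6]_m$ via the Atkin--Swinnerton-Dyer style congruences for $\sum p(11n+m)q^n$ re-derived in Section \ref{subsection:partfunccong}. Reading off the coefficients over $\mathbb{F}_{11}$ then produces the $24$ stated congruences.

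\textbf{Main obstacle.} The principal technical difficulty is the $1/11$ prefactor appearing in both $v_{11}$ and in $c_k/J_1$: individually these expressions do not lie in $\mathbb{F}_{11}[[q]]$, but their combination necessarily does (since the left-hand side is an integer $q$-series). Tracking the exact cancellation, and recasting the result in the bracket form $[c_1,\dots,c_5;c_6]_m$ demanded by the theorem, requires careful use of the identities $P_1P_2P_3P_4P_5 = J_1 J_{11}^4$ and $J_1^{11}\equiv J_{11}\Mod{11}$. Everything that remains is an extensive but mechanical bookkeeping exercise in $\mathbb{F}_{11}$, which I would verify term-by-term using a computer algebra system.
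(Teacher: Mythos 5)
Your strategy is essentially the paper's: reduce $N_k(n)$ modulo $11$ to $\sum_a a^k N(a,11,n)$, fold by the symmetry $N(a,11,n)=N(11-a,11,n)$ (using that $k$ is even), and then read off the residue-$m$ piece from the dissection elements $Q_{a,m}(q)$ of Theorem \ref{theorem:rankdissection} and Section \ref{section:devrank}; your treatment of residue $6$ (no $v_{11}$, no $G_{11}$, and $p(11n+6)\equiv 0 \Mod{11}$, collapsing to $2\sum_{a=1}^5 a^k\Theta_{a,6}$) matches the paper and your $k=2$ check is right. The only difference is bookkeeping of the $p(n)/11$ terms: the paper picks integer representatives of $2a^k \Mod{11}$ summing to zero exactly (e.g.\ $2,-3,-4,-1,6$ for $k=2$), so the partition-function contributions cancel identically and $T_{k,m}(q)\equiv \sum_{a=1}^{5}c_aQ_{a,m}(q) \Mod{11}$ with no separate correction, whereas you carry an explicit $c_k/J_1$ term and reinsert it through the congruences of Theorem \ref{theorem:pcong}; both routes work, the paper's being slightly cleaner. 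One detail in your Step 3 is inaccurate: by identity (\ref{identity:obrien}), $J_{11}^2=[1,-1,-1,-1,-1]$, so the Atkin--Swinnerton-Dyer congruence at residue $m\neq 6$ expresses $c_k\sum_n p(11n+m)q^n$ modulo $11$ as a multiple of $[1,-1,-1,-1,-1;0]_m$, i.e.\ it shifts the five-entry part of the bracket and not the $c_6$-entry; the $c_6$-entries in the stated congruences come from the $c_6$-parts of the dissection elements $Q_{a,m}$ themselves. This does not invalidate your plan, but it must be corrected before the final coefficient assembly over $\mathbb{F}_{11}$.
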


\begin{theorem} \label{theorem:crankmomentcong} We have
\begin{align*} 
T^{C}_{2,0}(q) \equiv T^{C}_{4,0}(q)  \equiv T^{C}_{6,0}(q)  \equiv T^{C}_{8,0}(q)  &\equiv 0 \Mod {11},\\
T^{C}_{2,1}(q)  \equiv T^{C}_{4,1}(q) \equiv T^{C}_{6,1}(q) \equiv T^{C}_{8,1}(q) &\equiv 2 \frac{J_{11}^2P_5}{P_2P_3} \Mod {11},\\
7T^{C}_{2,2}(q) \equiv  10T^{C}_{4,2}(q) \equiv  8T^{C}_{6,2}(q) \equiv  2T^{C}_{8,2}(q) &\equiv  \frac{J_{11}^2P_3}{P_1P_4} \Mod {11},\\
8T^{C}_{2,3}(q) \equiv  7T^{C}_{4,3}(q) \equiv  2T^{C}_{6,3}(q) \equiv  10T^{C}_{8,3}(q) &\equiv  \frac{J_{11}^2P_2}{P_1P_3} \Mod {11},\\
8T^{C}_{2,4}(q) \equiv  9T^{C}_{4,4}(q) \equiv  3T^{C}_{6,4}(q) \equiv  6T^{C}_{8,4}(q) &\equiv  \frac{J_{11}^2}{P_2} \Mod {11},\\
3T^{C}_{2,5}(q) \equiv 2T^{C}_{4,5}(q) \equiv  8T^{C}_{6,5}(q) \equiv  5T^{C}_{8,5}(q) &\equiv  \frac{J_{11}^2P_4}{P_2P_5} \Mod {11},\\
T^{C}_{2,7}(q) \equiv 7T^{C}_{4,7}(q) \equiv  10T^{C}_{6,7}(q) \equiv 5T^{C}_{8,7}(q) &\equiv  \frac{J_{11}^2}{P_3} \Mod {11},\\
7T^{C}_{2,8}(q) \equiv  9T^{C}_{4,8}(q) \equiv  9T^{C}_{6,8}(q) \equiv 7T^{C}_{8,8}(q) &\equiv  \frac{J_{11}^2qP_1}{P_4P_5} \Mod {11},\\
T^{C}_{2,9}(q) \equiv  9T^{C}_{4,9}(q) \equiv   4T^{C}_{6,9}(q) \equiv  3T^{C}_{8,9}(q) &\equiv  \frac{J_{11}^2}{P_4} \Mod {11},\\
3T^{C}_{2,10}(q) \equiv 4T^{C}_{4,10}(q) \equiv 9T^{C}_{6,10}(q) \equiv T^{C}_{8,10}(q) &\equiv \frac{J_{11}^2}{P_5} \Mod {11}.
\end{align*}
\begin{remark} The congruences among $M_k(11n+m)$ given by Theorem \ref{theorem:crankmomentcong} are corollaries of the general formulas found by Atkin and Garvan \cite[(6.6)-(6.8)]{AtG}.
\end{remark}
\end{theorem}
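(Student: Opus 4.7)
The plan is to reduce the $k$-th crank-moment generating function modulo $11$ to a linear combination of the deviations $D_{C}(a,11;q)$, to which Theorem \ref{theorem:crankdissection} can be directly applied. The key preliminary identity is that for even $k$, $m^k \equiv a^k \pmod{11}$ depends only on the residue of $m$ modulo $11$. Combined with the symmetry $M(a,11,n) = M(11-a,11,n)$ and $(11-a)^k \equiv a^k \pmod{11}$, this gives
$$M_k(n) \equiv 2\sum_{a=1}^{5} a^k M(a,11,n) \pmod{11}.$$
Since for $k\in\{2,4,6,8\}$ Fermat yields $\sum_{a=1}^{10} a^k \equiv 0 \pmod{11}$, the constant $\alpha_k := \tfrac{2}{11}\sum_{a=1}^{5} a^k$ is a rational integer, and substituting $M(a,11,n) = p(n)/11 + [q^n]D_{C}(a,11;q)$ converts the above to the generating function congruence
$$\sum_{n\ge 0} M_k(n)q^n \equiv \frac{\alpha_k}{J_1} + 2\sum_{a=1}^{5} a^k D_{C}(a,11;q) \pmod{11}.$$

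Next, I would substitute the dissection from Theorem \ref{theorem:crankdissection} and isolate the residue-$m$ part for each $m\in\{0,1,2,3,4,5,7,8,9,10\}$. Writing $R_m(Q) := \sum_n p(11n+m)Q^n$ and letting $U_m(Q)$ denote the theta quotient that carries residue $m$ in the dissection of $D_C$ (so $U_0 = J_{11}^2/P_1$, $U_1 = J_{11}^2P_5/(P_2P_3)$, and in particular $U_8 = J_{11}^2 qP_1/(P_4P_5)$ once the exponent $q^{19}$ is split as $q^8\cdot q^{11}$), the extracted relation reads
$$T^{C}_{k,m}(Q) \equiv \alpha_k R_m(Q) + \beta_{k,m} U_m(Q) \pmod{11},$$
where $\beta_{k,m} := \tfrac{2}{11}\sum_{a=1}^{5} a^k (a_m)_a\in\Z$ (its integrality being automatic, since the left side has integer coefficients). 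Inspecting Theorem \ref{theorem:crankdissection} row-by-row reveals the crucial feature that $(a_m)_a \pmod{11}$ is the same for all $a\in\{0,\ldots,5\}$ at each fixed $m$; this is equivalent to the Atkin--Swinnerton-Dyer dissection of $1/J_1$ modulo $11$ obtained in Section \ref{subsection:partfunccong}, and furnishes $R_m(Q) \equiv \gamma_m U_m(Q) \pmod{11}$ with $\gamma_m \equiv -(a_m)_0 \pmod{11}$.

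Plugging this in collapses the identity to $T^{C}_{k,m}(Q) \equiv c_{k,m} U_m(Q)\pmod{11}$, with $c_{k,m} \equiv 2\sum_{a=1}^{5} a^k e_a \pmod{11}$ and $e_a := ((a_m)_a+\gamma_m)/11\in\Z$; because most of the $(a_m)_a$ agree exactly modulo $11$, only one or two $e_a$ are nonzero per residue, so the remaining task is a purely mechanical check of forty small dot products and a single modular inversion per row (for example $c_{2,2}\equiv 2\cdot 2^2 = 8 \equiv 7^{-1} \pmod{11}$ accounts for the coefficient $7$ in $7T^{C}_{2,2}(q)\equiv J_{11}^2P_3/(P_1P_4)\pmod{11}$). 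The residue $m=6$ is absent from the statement because $V_6$ does not appear in Theorem \ref{theorem:crankdissection} and $R_6 \equiv 0 \pmod{11}$ by Ramanujan's congruence, so $T^{C}_{k,6}\equiv 0\pmod{11}$ trivially. I expect the main obstacle to be the bookkeeping in the extraction step --- tracking the correct theta quotient $U_m$ at each residue (especially at $m=8$) and inverting the arising integers modulo $11$ to match the normalizations on the left --- but no theta-function identity beyond Theorem \ref{theorem:crankdissection} and the dissection of Section \ref{subsection:partfunccong} is required, in line with the remark that these congruences fall under the general Atkin--Garvan framework.
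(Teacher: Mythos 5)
Your proposal is correct and follows essentially the same route as the paper: reduce $M_k(n)$ modulo $11$ to a combination of $M(a,11,n)$, $a=1,\dots,5$, using that $m^k\bmod 11$ depends only on $m\bmod 11$ together with crank symmetry, and then read off each residue class from Theorem \ref{theorem:crankdissection}. The only difference is bookkeeping: the paper chooses integer representatives of $2a^k\bmod 11$ that sum to zero, so the $p(n)/11$ terms cancel and $T^{C}_{k,m}$ is directly a combination of the deviations $Q^{C}_{a,m}$, whereas you keep the coefficients $2a^k$, carry the extra term $\alpha_k/J_1$, and eliminate it via the partition congruences of Theorem \ref{theorem:pcong}, which are themselves corollaries of the same crank dissection.
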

Using \cite[Theorem 5.1]{G10} we can also deduce congruences for Eisenstein series $E_4$ and $E_6$ defined as
\begin{equation*}
E_j(q) := 1-\frac{2n}{B_n}\sum_{n=1}^{\infty} \sigma_{j-1}(n)q^n,
\end{equation*}
where $B_n$ is the $n$-th Bernoulli number and $\sigma_{k}(n) = \sum_{d | n} d^k$.
\begin{corollary} \label{corollary:eisensteincong} Using the notation of Definition \ref{definition:v11G11Theta}, we have
\begin{align*}
E_4(q) &\equiv \frac{1}{J_1^2J_{11}}\Theta(-1,1,1,1,1) \Mod {11},\\
E_6(q) &\equiv \frac{1}{J_1^2J_{11}}\Theta(-3,1,5,4,-2) \Mod {11}.
\end{align*}
\end{corollary}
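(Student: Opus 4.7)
The plan is to extract the Eisenstein series congruences from the rank–moment congruences at residue $6$ modulo $11$ that are already in hand from Theorem \ref{theorem:rankmomentcong}. The starting observation is that Garvan's \cite[Theorem 5.1]{G10} gives, for each even $k\in\{2,4,6,8\}$, an explicit representation of $T_{k,6}(q)$ modulo $11$ as a linear combination (over $\mathbb{F}_{11}$) of monomials in $E_4(q)$ and $E_6(q)$, weighted by fixed eta quotients in $J_1$ and $J_{11}$. In particular the pair $(T_{2,6},T_{4,6})$ yields a system whose unknowns are $E_4$ and $E_6$ modulo $11$.

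First I would write down Garvan's formulas for $T_{2,6}(q)$ and $T_{4,6}(q)$ modulo $11$ from \cite[Theorem 5.1]{G10}, translating them into the present $P_i$/$J_i$ notation. Next, I would substitute the right-hand sides from Theorem \ref{theorem:rankmomentcong}, namely
\begin{equation*}
T_{2,6}(q)\equiv \Theta(-4,3,1,5,-2),\qquad T_{4,6}(q)\equiv \Theta(-2,-4,-5,-3,-1)\Mod{11}.
\end{equation*}
This produces a two-equation linear system over $\mathbb{F}_{11}$ whose unknowns are $E_4(q)$ and $E_6(q)$, with right-hand sides that are $\mathbb{F}_{11}$-linear combinations of the $\Theta$-quintuples $(-4,3,1,5,-2)$ and $(-2,-4,-5,-3,-1)$, divided by the eta factor $J_1^2J_{11}$ that appears when one moves Garvan's weight-carrying prefactor to the other side.

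The third step is to invert the $2\times 2$ coefficient matrix modulo $11$ (after checking its determinant is a unit in $\mathbb{F}_{11}$) and use the linearity
\begin{equation*}
\alpha\,\Theta(a_1,\ldots,a_5)+\beta\,\Theta(b_1,\ldots,b_5)=\Theta(\alpha a_1+\beta b_1,\ldots,\alpha a_5+\beta b_5)
\end{equation*}
to collapse the result to a single $\Theta$-quintuple in each case. The claim of the corollary is precisely that these collapsed quintuples are $(-1,1,1,1,1)$ for $E_4$ and $(-3,1,5,4,-2)$ for $E_6$; verifying this is then a finite arithmetic check in $\mathbb{F}_{11}$.

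The main obstacle is bookkeeping rather than conceptual: one must faithfully transport Garvan's eta-quotient prefactors into the $J_1$, $J_{11}$, $P_i$ language used here, confirm that the coefficient matrix of the resulting system is invertible modulo $11$, and then grind through the two $\mathbb{F}_{11}$-linear combinations of $(-4,3,1,5,-2)$ and $(-2,-4,-5,-3,-1)$ to land on the predicted quintuples. No new analytic input beyond \cite{G10} and Theorem \ref{theorem:rankmomentcong} is required.
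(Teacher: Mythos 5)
Your overall strategy --- feed the $\Theta$-congruences of Theorem \ref{theorem:rankmomentcong} into Garvan's \cite[Theorem 5.1]{G10} relations and solve for $E_4$, $E_6$ using the linearity of $\Theta$ --- is exactly the paper's, but your choice of moments breaks the argument. Garvan's Theorem 5.1 does \emph{not} give a system in $E_4$ and $E_6$ from the pair $(T_{2,6},T_{4,6})$: modulo $11$ one has $T_{2,6}(q)\equiv 3J_1^{13}$, with no Eisenstein series at all, and the fourth moment likewise carries no $E_6$ (by weight it can involve at most weight-$4$ quasimodular data; in fact the paper's own data force $T_{4,6}\equiv\Theta(-2,-4,-5,-3,-1)\equiv -4J_1^{13}\pmod{11}$, again a constant multiple of $J_1^{13}$). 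So the $2\times 2$ ``coefficient matrix'' you propose to invert is degenerate --- $E_6$ simply never appears in those two equations, and $E_4$ appears in neither --- and the finite $\mathbb{F}_{11}$ check you defer to cannot be carried out. $E_4$ first enters at the sixth moment and $E_6$ at the eighth.

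The paper's proof uses $T_{6,6}(q)\equiv J_1^{13}(4+E_4)$ and $T_{8,6}(q)\equiv J_1^{13}(5+6E_4+6E_6)\pmod{11}$, which form a triangular system, together with $T_{2,6}\equiv 3J_1^{13}$ and the congruence $J_1^{13}\equiv J_1^2J_{11}\pmod{11}$; the second-moment relation is still needed, but only to convert the constant multiples of $J_1^{13}$ into $\Theta$-language (e.g.\ $4J_1^2J_{11}\equiv 5\,\Theta(-4,3,1,5,-2)\equiv\Theta(2,4,5,3,1)\pmod{11}$), after which subtracting from $\Theta(1,5,-5,4,2)$ gives $E_4\equiv\frac{1}{J_1^2J_{11}}\Theta(-1,1,1,1,1)$, and the eighth-moment relation then yields the stated expression for $E_6$. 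To repair your write-up, replace the pair $(T_{2,6},T_{4,6})$ by $(T_{6,6},T_{8,6})$ and keep $T_{2,6}$ only in the bookkeeping role just described.
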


\section{Proof of Theorem \ref{theorem:crankdissection}}\label{section:proofdevcrank}
In this section we demonstrate how to obtain the $11$-dissection for the crank deviation as stated in Theorem \ref{theorem:crankdissection}.
\begin{proof}[Proof of Theorem \ref{theorem:crankdissection}] From \cite{AG} we know that the two-variable generating function for the crank has the form
\begin{equation*}
F(z;q) := \sum_{n=0}^{\infty} \sum_{m=-\infty}^{\infty} M(m,n)z^m q^n = \frac{(q)_{\infty}}{(zq)_{\infty}(z^{-1}q)_{\infty}}.
\end{equation*}
Here for $n \leq 1$ we set
\begin{equation*}
M(m,n) := \begin{cases}
    -1, &\text{if} \ (m,n) = (0,1),\\
    1, &\text{if} \ (m,n) = (0,0), (1,1), (-1,1),\\
    0, &\text{otherwise}.
\end{cases}
\end{equation*}
From this we can find the formula for the deviation of crank, which was also mentioned in \cite[(2.12)]{M}:
\begin{equation} \label{equation:devcrankformula}
D_{C}(a,r) = \frac{1}{r} \sum_{j=1}^{r-1} \zeta_{r}^{-aj}\frac{(q)_{\infty}}{(\zeta_{r}^{j}q)_{\infty}(\zeta_{r}^{-j}q)_{\infty}} =  \frac{1}{r} \sum_{j=1}^{r-1} \zeta_{r}^{-aj} F(\zeta_{r}^{j};q) 
\end{equation}
Let us take the $11$-dissection for $F(\zeta_{11};q)$, which is given in \cite[Theorem 7.1]{B} and apply an identity, which can be verified by rearranging terms:
\begin{equation*}
X_1X_2X_3X_4X_5 = J_{11}J_{121}^4,
\end{equation*}
where $X_i$ is defined in (\ref{definition:JPX}). Then we will obtain
\begin{multline*}
F(\zeta^{j}_{11};q) =  J_{121}^2 \Big( \frac{1}{X_1} + (A_1-1)q\frac{X_5}{X_2X_3} + A_2q^2\frac{X_3}{X_1X_4} +(A_3+1)q^3\frac{X_2}{X_1X_3} \\+ (A_2+A_4+1)q^4\frac{1}{X_2} - (A_2+A_4)q^5\frac{X_4}{X_2X_5} + (A_1+A_4)q^7\frac{1}{X_3} \\-(A_2+A_5+1)q^{19}\frac{X_1}{X_4X_5} - (A_4+1)q^{9}\frac{1}{X_4} - A_3q^{10}\frac{1}{X_5} \Big ),
\end{multline*}
where $A_n = \zeta_{11}^{jn} + \zeta_{11}^{-jn}$ and $\zeta_r := e^{2 \pi i / r}$ is a primitive root of unity. We put the previous expressions into (\ref{equation:devcrankformula}) with $r=11$ and directly obtain Theorem \ref{theorem:crankdissection}.
\end{proof}

\section{$11$-dissection for the deviation of the rank}\label{section:devrank}
Define $Q_{a,m}(q)$ to be the elements of the $11$-dissection of the deviation of the rank:
\begin{equation*}
D(a,11) =: \sum_{m=0}^{10}  Q_{a,m}(q^{11})q^m.
\end{equation*} 
The reformulation of this definition then reads
\begin{equation} \label{definition:rankdisselem}
Q_{a,m}(q) := \sum^{\infty}_{n = 0} \left(N(a,11,11n+m) - \frac{p(11n+m)}{11}\right)q^n.
\end{equation}
For convenience we will decompose
\begin{equation*}
Q_{a,m}(q) = Q^{\text{th}}_{a,m}(q)+Q^{\text{mck}}_{a,m}(q),
\end{equation*}
where $Q^{\text{mck}}_{a,m}$ is a mock part of $Q_{a,m}$, that is, it corresponds to the terms in $G_{11}$.  Note that $Q^{\text{mck}}_{a,m}$ can be non-zero only for $(a,m) \in \{(0,0),(1,0),(4,4),(5,4),(1,7),(2,7),(3,9),(4,9),(2,10),(3,10)\}$ as stated in Theorem \ref{theorem:rankdissection}.  For residue $0$ we have
\begin{align}
\begin{split}\label{equation:Qmockpart0_1}
Q^{\text{mck}}_{0,0}(q) &= -2q^2g(q^{2};q^{11}),
\end{split}\\
\begin{split}\label{equation:Qmockpart0_2}
Q^{\text{mck}}_{1,0}(q) &= q^2g(q^{2};q^{11}).
\end{split}
\end{align}
For residue $4$ we have
\begin{align}
\begin{split}\label{equation:Qmockpart4_1}
Q^{\text{mck}}_{4,4}(q) &=  q^{3}g(q^{4};q^{11}),
\end{split}\\
\begin{split}\label{equation:Qmockpart4_2}
Q^{\text{mck}}_{5,4}(q) &=  -q^{3}g(q^{4};q^{11}).
\end{split}
\end{align}
For residue $7$ we have
\begin{align}
\begin{split}\label{equation:Qmockpart7_1}
Q^{\text{mck}}_{1,7}(q) &=  -q^{3}g(q^{5};q^{11}),
\end{split}\\
\begin{split}\label{equation:Qmockpart7_2}
Q^{\text{mck}}_{2,7}(q) &=  q^{3}g(q^{5};q^{11}).
\end{split}
\end{align}
For residue $9$ we have
\begin{align}
\begin{split}\label{equation:Qmockpart9_1}
Q^{\text{mck}}_{3,9}(q) &=  q^{2}g(q^{3};q^{11}),
\end{split}\\
\begin{split}\label{equation:Qmockpart9_2}
Q^{\text{mck}}_{4,9}(q) &=  -q^{2}g(q^{3};q^{11}).
\end{split}
\end{align}
For residue $10$ we have
\begin{align}
\begin{split}\label{equation:Qmockpart10_1}
Q^{\text{mck}}_{2,10}(q) &=  -[q^{-1}+g(q;q^{11})],
\end{split}\\
\begin{split}\label{equation:Qmockpart10_2}
Q^{\text{mck}}_{3,10}(q) &=  q^{-1}+g(q;q^{11}).
\end{split}
\end{align}
\begin{remark} We can establish mock parts of $Q_{a,m}$ by using \cite[Theorem 4.1]{HM17}.
\end{remark}
The remainder $Q_{a,m}-Q^{\text{mck}}_{a,m}$ we will call the theta part and denote $Q^{\text{th}}_{a,m}$. Also recall a useful identity found by O'Brien \cite[p. 6]{OB}:
\begin{equation*} 
J_{1}^3 = P_5^2 P_4 - q^{2}P_1^2 P_3  - qP_4^2 P_1 - qP_2^2P_5 - qP_3^2P_2.
\end{equation*}
By Definition \ref{definition:mainbracket} this identity is transformed to
\begin{equation} \label{identity:obrien}
J_{11}^2 = [1,-1,-1,-1,-1].
\end{equation}
Note that by using identity (\ref{identity:obrien}) we can convert $v_{11}$-term of $Q_{a,m}$ to the form $[c_1,c_2,c_3,c_4,c_5;c_6]_{m}$. For example we know
\begin{equation*}
\frac{J_{11}^2P_5}{P_2P_3} =  [1,-1,-1,-1,-1;0]_{1}.
\end{equation*}

Then we can formulate Theorem \ref{theorem:rankdissection} in its full form, that is, write $\Theta_{a,m}(q)$ explicitly. The mock parts of the dissection elements for residue $0$ are stated in (\ref{equation:Qmockpart0_1}) and (\ref{equation:Qmockpart0_2}). The theta parts of the dissection elements for residue $0$ are
\begin{align*}
Q^{\text{th}}_{0,0}(q) &=  \frac{1}{11}[10,56,-32,-10,-10;22]_{0} = \frac{10}{11}\frac{J_{11}^2}{P_1} + [0,6,-2,0,0;2]_{0} =: \frac{10}{11}\frac{J_{11}^2}{P_1} + \Theta_{0,0}(q),\\
Q^{\text{th}}_{1,0}(q) &=  \frac{1}{11}[-1,-10,23,1,1;-11]_{0} = -\frac{1}{11}\frac{J_{11}^2}{P_1} + [0,-1,2,0,0;-1]_{0 }=: -\frac{1}{11}\frac{J_{11}^2}{P_1} + \Theta_{1,0}(q), \\
Q^{\text{th}}_{2,0}(q) &=  \frac{1}{11}[-1,-32,1,12,1;0]_{0} = -\frac{1}{11}\frac{J_{11}^2}{P_1}+[0,-3,0,1,0;0]_{0}=: -\frac{1}{11}\frac{J_{11}^2}{P_1} + \Theta_{2,0}(q),\\
Q^{\text{th}}_{3,0}(q) &=  \frac{1}{11}[-1,23,-21,1,12;0]_{0} = -\frac{1}{11}\frac{J_{11}^2}{P_1} +[0,2,-2,0,1;0]_{0}=: -\frac{1}{11}\frac{J_{11}^2}{P_1} + \Theta_{3,0}(q),\\
Q^{\text{th}}_{4,0}(q) &=  \frac{1}{11}[-1,-10,23,-21,1;0]_{0} = -\frac{1}{11}\frac{J_{11}^2}{P_1}+[0,-1,2,-2,0;0]_{0}=: -\frac{1}{11}\frac{J_{11}^2}{P_1} + \Theta_{4,0}(q),\\
Q^{\text{th}}_{5,0}(q) &=  \frac{1}{11}[-1,1,-10,12,-10;0]_{0} = -\frac{1}{11}\frac{J_{11}^2}{P_1} + [0,0,-1,1,-1;0]_{0}=: -\frac{1}{11}\frac{J_{11}^2}{P_1} + \Theta_{5,0}(q).
\end{align*}
Dissection elements for residue $1$ are
\begin{align*}
Q_{0,1}(q) &=  \frac{1}{11}[10,12,12,12,-10;-22]_{1} = -\frac{12}{11}\frac{J_{11}^2P_5}{P_2P_3} + [2,0,0,0,-2;-2]_{1} =:-\frac{12}{11}\frac{J_{11}^2P_5}{P_2P_3} + \Theta_{0,1}(q),\\
Q_{1,1}(q) &=  \frac{1}{11}[-1,-10,-10,1,23;0]_{1}= \frac{10}{11}\frac{J_{11}^2P_5}{P_2P_3} + [-1,0,0,1,3;0]_{1}=:\frac{10}{11}\frac{J_{11}^2P_5}{P_2P_3}+ \Theta_{1,1}(q), \\
Q_{2,1}(q) &=  \frac{1}{11}[-1,12,1,-10,1;11]_{1}= -\frac{1}{11}\frac{J_{11}^2P_5}{P_2P_3} + [0,1,0,-1,0;1]_{1}=:-\frac{1}{11}\frac{J_{11}^2P_5}{P_2P_3}+ \Theta_{2,1}(q), \\
Q_{3,1}(q) &=  \frac{1}{11}[-1,-10,12,1,-21;11]_{1}= -\frac{1}{11}\frac{J_{11}^2P_5}{P_2P_3} + [0,-1,1,0,-2;1]_{1}=:-\frac{1}{11}\frac{J_{11}^2P_5}{P_2P_3} + \Theta_{3,1}(q),\\
Q_{4,1}(q) &=  \frac{1}{11}[-1,1,-10,12,-10;0]_{1}= -\frac{1}{11}\frac{J_{11}^2P_5}{P_2P_3} + [0,0,-1,1,-1;0]_{1}=:-\frac{1}{11}\frac{J_{11}^2P_5}{P_2P_3}+ \Theta_{4,1}(q), \\
Q_{5,1}(q) &=  \frac{1}{11}[-1,1,1,-10,12;-11]_{1}  = -\frac{1}{11}\frac{J_{11}^2P_5}{P_2P_3} + [0,0,0,-1,1;-1]_{1}=:-\frac{1}{11}\frac{J_{11}^2P_5}{P_2P_3}+ \Theta_{5,1}(q).
\end{align*}
Dissection elements for residue $2$ are
\begin{align*}
Q_{0,2}(q) &=  \frac{1}{11}[-2,-20,24,2,2;-22]_{2} = -\frac{2}{11}\frac{J_{11}^2P_3}{P_1P_4}+ [0,-2,2,0,0;-2]_{2}=:-\frac{2}{11}\frac{J_{11}^2P_3}{P_1P_4}+ \Theta_{0,2}(q),\\
Q_{1,2}(q) &=  \frac{1}{11}[9,13,-31,2,2;11]_{2} = -\frac{2}{11}\frac{J_{11}^2P_3}{P_1P_4}+ [1,1,-3,0,0;1]_{2}=:-\frac{2}{11}\frac{J_{11}^2P_3}{P_1P_4}+ \Theta_{1,2}(q),\\
Q_{2,2}(q) &=  \frac{1}{11}[-2,-9,24,-9,2;11]_{2} = \frac{9}{11}\frac{J_{11}^2P_3}{P_1P_4}+ [-1,0,3,0,1;1]_{2}=:\frac{9}{11}\frac{J_{11}^2P_3}{P_1P_4}+ \Theta_{2,2}(q),\\
Q_{3,2}(q) &=  \frac{1}{11}[-2,13,13,2,-9;-22]_{2} = -\frac{2}{11}\frac{J_{11}^2P_3}{P_1P_4}+ [0,1,1,0,-1;-2]_{2}=:-\frac{2}{11}\frac{J_{11}^2P_3}{P_1P_4}+ \Theta_{3,2}(q),\\
Q_{4,2}(q) &=  \frac{1}{11}[-2,-9,-20,13,2;22]_{2} =  -\frac{2}{11}\frac{J_{11}^2P_3}{P_1P_4}+ [0,-1,-2,1,0;2]_{2}=:-\frac{2}{11}\frac{J_{11}^2P_3}{P_1P_4}+ \Theta_{4,2}(q),\\
Q_{5,2}(q) &=  \frac{1}{11}[-2,2,2,-9,2;-11]_{2}= -\frac{2}{11}\frac{J_{11}^2P_3}{P_1P_4}+ [0,0,0,-1,0;-1]_{2}=:-\frac{2}{11}\frac{J_{11}^2P_3}{P_1P_4}+ \Theta_{5,2}(q).
\end{align*}
Dissection elements for residue $3$ are
\begin{align*}
Q_{0,3}(q) &=  \frac{1}{11} [8,36,-8,-8,14;-22]_{3} =  \frac{8}{11}\frac{J_{11}^2 P_2}{P_1P_3} + [0,4,0,0,2;-2]_{3}=:\frac{8}{11}\frac{J_{11}^2 P_2}{P_1P_3}+ \Theta_{0,3}(q),\\
Q_{1,3}(q) &= \frac{1}{11} [-3,-30,3,3,3;22]_{3}= -\frac{3}{11}\frac{J_{11}^2 P_2}{P_1P_3} +[0,-3,0,0,0;2]_{3}=:-\frac{3}{11}\frac{J_{11}^2 P_2}{P_1P_3}+ \Theta_{1,3}(q),  \\
Q_{2,3}(q) &=  \frac{1}{11} [8,14,3,3,-8;-22]_{3}= -\frac{3}{11}\frac{J_{11}^2 P_2}{P_1P_3} + [1,1,0,0,-1;-2]_{3}=:-\frac{3}{11}\frac{J_{11}^2 P_2}{P_1P_3}+ \Theta_{2,3}(q), \\
Q_{3,3}(q) &=  \frac{1}{11} [-3,3,-8,3,-8;22]_{3}= -\frac{3}{11}\frac{J_{11}^2 P_2}{P_1P_3} +[0,0,-1,0,-1;2]_{3}=:-\frac{3}{11}\frac{J_{11}^2 P_2}{P_1P_3}+ \Theta_{3,3}(q), \\
Q_{4,3}(q) &=  \frac{1}{11} [-3,14,14,-8,3;-11]_{3}= -\frac{3}{11}\frac{J_{11}^2 P_2}{P_1P_3} +[0,1,1,-1,0;-1]_{3}=:-\frac{3}{11}\frac{J_{11}^2 P_2}{P_1P_3}+ \Theta_{4,3}(q),\\
Q_{5,3}(q) &=  \frac{1}{11} [-3,-19,-8,3,3;0]_{3}= -\frac{3}{11}\frac{J_{11}^2 P_2}{P_1P_3} +[0,-2,-1,0,0;0]_{3}=:-\frac{3}{11}\frac{J_{11}^2 P_2}{P_1P_3}+ \Theta_{5,3}(q).
\end{align*}
The mock parts of the dissection elements for residue $4$ are stated in (\ref{equation:Qmockpart4_1}) and (\ref{equation:Qmockpart4_2}). The theta parts of the dissection elements for residue $4$ are
\begin{align*}
Q^{\text{th}}_{0,4}(q) &=  \frac{1}{11} [6,-6,16,-28,16;0]_{4}= \frac{6}{11}\frac{J_{11}^2}{P_2}+ [0,0,2,-2,2;0]_{4}=:\frac{6}{11}\frac{J_{11}^2}{P_2}+ \Theta_{0,4}(q),\\
Q^{\text{th}}_{1,4}(q) &=  \frac{1}{11} [6,5,-17,5,5;0]_{4}= -\frac{5}{11}\frac{J_{11}^2}{P_2}+ [1,0,-2,0,0;0]_{4}=:-\frac{5}{11}\frac{J_{11}^2}{P_2}+ \Theta_{1,4}(q),\\
Q^{\text{th}}_{2,4}(q) &=  \frac{1}{11} [-5,-6,5,27,-6;0]_{4}= \frac{6}{11}\frac{J_{11}^2}{P_2}+ [-1,0,1,3,0;0]_{4}=:\frac{6}{11}\frac{J_{11}^2}{P_2}+ \Theta_{2,4}(q),\\
Q^{\text{th}}_{3,4}(q) &=  \frac{1}{11} [6,5,16,-17,-17;0]_{4}= \frac{17}{11}\frac{J_{11}^2}{P_2}+ [-1,2,3,0,0;0]_{4}=:\frac{17}{11}\frac{J_{11}^2}{P_2}+ \Theta_{3,4}(q),\\
Q^{\text{th}}_{4,4}(q) &=  \frac{1}{11} [-5,-6,-17,27,-6;-11]_{4}= \frac{6}{11}\frac{J_{11}^2}{P_2}+ [-1,0,-1,3,0;-1]_{4}=:\frac{6}{11}\frac{J_{11}^2}{P_2}+ \Theta_{4,4}(q),\\
Q^{\text{th}}_{5,4}(q) &=  \frac{1}{11} [-5,5,5,-28,16;11]_{4}= -\frac{5}{11}\frac{J_{11}^2}{P_2}+ [0,0,0,-3,1;1]_{4}=:-\frac{5}{11}\frac{J_{11}^2}{P_2}+ \Theta_{5,4}(q).
\end{align*}
Dissection elements for residue $5$ are
\begin{align*}
Q_{0,5}(q) &=  \frac{1}{11} [4,18,-4,-4,-4;22]_{5}= \frac{4}{11}\frac{J_{11}^2 P_4}{P_2P_5}+[0,2,0,0,0;2]_{5}=:\frac{4}{11}\frac{J_{11}^2 P_4}{P_2P_5}+ \Theta_{0,5}(q),\\
Q_{1,5}(q) &=  \frac{1}{11} [4,-15,7,7,-4;0]_{5}= \frac{4}{11}\frac{J_{11}^2 P_4}{P_2P_5}+[0,-1,1,1,0;0]_{5}=:\frac{4}{11}\frac{J_{11}^2 P_4}{P_2P_5}+ \Theta_{1,5}(q),\\
Q_{2,5}(q) &=  \frac{1}{11} [4,7,-4,7,7;-22]_{5}= -\frac{7}{11}\frac{J_{11}^2 P_4}{P_2P_5}+[1,0,-1,0,0;-2]_{5}=:-\frac{7}{11}\frac{J_{11}^2 P_4}{P_2P_5}+ \Theta_{2,5}(q),\\
Q_{3,5}(q) &=  \frac{1}{11} [-7,-4,-4,7,7;11]_{5}= -\frac{7}{11}\frac{J_{11}^2 P_4}{P_2P_5}+[0,-1,-1,0,0;1]_{5}=:-\frac{7}{11}\frac{J_{11}^2 P_4}{P_2P_5}+ \Theta_{3,5}(q),\\
Q_{4,5}(q) &=  \frac{1}{11} [4,7,7,-37,-4;11]_{5}= \frac{4}{11}\frac{J_{11}^2 P_4}{P_2P_5}+[0,1,1,-3,0;1]_{5}=:\frac{4}{11}\frac{J_{11}^2 P_4}{P_2P_5}+ \Theta_{4,5}(q),\\
Q_{5,5}(q) &=  \frac{1}{11} [-7,-4,-4,18,-4;-11]_{5}= \frac{4}{11}\frac{J_{11}^2 P_4}{P_2P_5}+[-1,0,0,2,0;-1]_{5}=:\frac{4}{11}\frac{J_{11}^2 P_4}{P_2P_5}+ \Theta_{5,5}(q).
\end{align*}
The mock parts of the dissection elements for residue $7$ are stated in (\ref{equation:Qmockpart7_1}) and (\ref{equation:Qmockpart7_2}). The theta parts of the dissection elements for residue $7$ are
\begin{align*}
Q^{\text{th}}_{0,7}(q) &= \frac{1}{11}[18,26,4,-18,-18;0]_{7}= \frac{18}{11}\frac{J_{11}^2}{P_3}+[0,4,2,0,0;0]_{7}=:\frac{18}{11}\frac{J_{11}^2}{P_3}+ \Theta_{0,7}(q),\\
Q^{\text{th}}_{1,7}(q) &= \frac{1}{11}[-4,-29,-7,4,37;11]_{7}= -\frac{4}{11}\frac{J_{11}^2}{P_3}+[0,-3,-1,0,3;1]_{7}=:-\frac{4}{11}\frac{J_{11}^2}{P_3}+ \Theta_{1,7}(q),\\
Q^{\text{th}}_{2,7}(q) &= \frac{1}{11}[7,26,4,15,-29;-11]_{7}= -\frac{4}{11}\frac{J_{11}^2}{P_3}+[1,2,0,1,-3;-1]_{7}=:-\frac{4}{11}\frac{J_{11}^2}{P_3}+ \Theta_{2,7}(q),\\
Q^{\text{th}}_{3,7}(q) &= \frac{1}{11}[-4,-18,4,-7,15;0]_{7}= -\frac{4}{11}\frac{J_{11}^2}{P_3}+[0,-2,0,-1,1;0]_{7}=:-\frac{4}{11}\frac{J_{11}^2}{P_3}+ \Theta_{3,7}(q),\\
Q^{\text{th}}_{4,7}(q) &= \frac{1}{11}[-4,15,-7,-7,4;0]_{7}= -\frac{4}{11}\frac{J_{11}^2}{P_3}+[0,1,-1,-1,0;0]_{7}=:-\frac{4}{11}\frac{J_{11}^2}{P_3}+ \Theta_{4,7}(q),\\
Q^{\text{th}}_{5,7}(q) &= \frac{1}{11}[-4,-7,4,4,-18;0]_{7}= -\frac{4}{11}\frac{J_{11}^2}{P_3}+[0,-1,0,0,-2;0]_{7}=:-\frac{4}{11}\frac{J_{11}^2}{P_3}+ \Theta_{5,7}(q).
\end{align*}
Dissection elements for residue $8$ are
\begin{align*}
Q_{0,8}(q) &=  \frac{1}{11}[38,6,6,-16,6;0]_{8}= -\frac{6}{11}\frac{J_{11}^2 qP_1}{P_4P_5}+[4,0,0,-2,0;0]_{8}=:-\frac{6}{11}\frac{J_{11}^2 qP_1}{P_4P_5}+ \Theta_{0,8}(q),\\
Q_{1,8}(q) &=  \frac{1}{11}[-17,-5,-5,6,6;11]_{8}= -\frac{6}{11}\frac{J_{11}^2 qP_1}{P_4P_5}+[-1,-1,-1,0,0;1]_{8}=:-\frac{6}{11}\frac{J_{11}^2 qP_1}{P_4P_5}+ \Theta_{1,8}(q),\\
Q_{2,8}(q) &=  \frac{1}{11}[16,6,-5,6,-5;0]_{8}= \frac{5}{11}\frac{J_{11}^2 qP_1}{P_4P_5}+[1,1,0,1,0;0]_{8}=:\frac{5}{11}\frac{J_{11}^2 qP_1}{P_4P_5}+ \Theta_{2,8}(q),\\
Q_{3,8}(q) &=  \frac{1}{11}[-6,-5,17,-5,-5;0]_{8}= \frac{5}{11}\frac{J_{11}^2 qP_1}{P_4P_5}+[-1,0,2,0,0;0]_{8}=:\frac{5}{11}\frac{J_{11}^2 qP_1}{P_4P_5}+ \Theta_{3,8}(q),\\
Q_{4,8}(q) &=  \frac{1}{11}[-17,6,-16,-5,-5;0]_{8}=\frac{5}{11}\frac{J_{11}^2 qP_1}{P_4P_5}+[-2,1,-1,0,0;0]_{8}=: \frac{5}{11}\frac{J_{11}^2 qP_1}{P_4P_5}+ \Theta_{4,8}(q),\\
Q_{5,8}(q) &=  \frac{1}{11}[5,-5,6,6,6;-11]_{8}= -\frac{6}{11}\frac{J_{11}^2 qP_1}{P_4P_5}+[1,-1,0,0,0;-1]_{8}=:-\frac{6}{11}\frac{J_{11}^2 qP_1}{P_4P_5}+ \Theta_{5,8}(q).
\end{align*}
The mock parts of the dissection elements for residue $9$ are stated in (\ref{equation:Qmockpart9_1}) and (\ref{equation:Qmockpart9_2}). The theta parts of the dissection elements for residue $9$ are
\begin{align*}
Q^{\text{th}}_{0,9}(q) &=  \frac{1}{11} [14,8,-36,8,8;0]_{9}= -\frac{8}{11}\frac{J_{11}^2}{P_4}+ [2,0,-4,0,0;0]_{9}=:-\frac{8}{11}\frac{J_{11}^2}{P_4}+ \Theta_{0,9}(q),\\
Q^{\text{th}}_{1,9}(q) &=  \frac{1}{11} [3,-14,19,-3,8;0]_{9}= \frac{3}{11}\frac{J_{11}^2}{P_4}+ [0,-1,2,0,1;0]_{9}=:\frac{3}{11}\frac{J_{11}^2}{P_4}+ \Theta_{1,9}(q),\\
Q^{\text{th}}_{2,9}(q) &=  \frac{1}{11} [3,19,8,-3,-3;0]_{9}= \frac{3}{11}\frac{J_{11}^2}{P_4}+ [0,2,1,0,0;0]_{9}=:\frac{3}{11}\frac{J_{11}^2}{P_4}+ \Theta_{2,9}(q),\\
Q^{\text{th}}_{3,9}(q) &=  \frac{1}{11} [3,-14,19,-3,-14;-11]_{9}= \frac{3}{11}\frac{J_{11}^2}{P_4}+ [0,-1,2,0,-1;-1]_{9}=:\frac{3}{11}\frac{J_{11}^2}{P_4}+ \Theta_{3,9}(q),\\
Q^{\text{th}}_{4,9}(q) &=  \frac{1}{11} [-8,8,-14,8,-3;11]_{9}= -\frac{8}{11}\frac{J_{11}^2}{P_4}+ [0,0,-2,0,-1;1]_{9}=:-\frac{8}{11}\frac{J_{11}^2}{P_4}+ \Theta_{4,9}(q),\\
Q^{\text{th}}_{5,9}(q) &=  \frac{1}{11} [-8,-3,-14,-3,8;0]_{9}= \frac{3}{11}\frac{J_{11}^2}{P_4}+ [-1,0,-1,0,1;0]_{9}=:\frac{3}{11}\frac{J_{11}^2}{P_4}+ \Theta_{5,9}(q).
\end{align*}
The mock parts of the dissection elements for residue $10$ are stated in (\ref{equation:Qmockpart10_1}) and (\ref{equation:Qmockpart10_2}). The theta parts of the dissection elements for residue $10$ are
\begin{align*}
Q^{\text{th}}_{0,10}(q) &=  \frac{1}{11} [2,-24,20,20,-2;0]_{10}=\frac{2}{11}\frac{J_{11}^2}{P_5} + [0,-2,2,2,0;0]_{10}=:\frac{2}{11}\frac{J_{11}^2}{P_5}+ \Theta_{0,10}(q),\\
Q^{\text{th}}_{1,10}(q) &=  \frac{1}{11} [13,20,-13,-2,-2;0]_{10}= \frac{2}{11}\frac{J_{11}^2}{P_5} + [1,2,-1,0,0;0]_{10}=:\frac{2}{11}\frac{J_{11}^2}{P_5}+ \Theta_{1,10}(q),\\
Q^{\text{th}}_{2,10}(q) &=  \frac{1}{11} [-42,-13,-2,-24,-2;11]_{10}=\frac{2}{11}\frac{J_{11}^2}{P_5} + [-4,-1,0,-2,0;1]_{10}=:\frac{2}{11}\frac{J_{11}^2}{P_5}+ \Theta_{2,10}(q),\\
Q^{\text{th}}_{3,10}(q) &=  \frac{1}{11} [46,9,9,9,9;-11]_{10}=-\frac{9}{11}\frac{J_{11}^2}{P_5} + [5,0,0,0,0;-1]_{10}=:-\frac{9}{11}\frac{J_{11}^2}{P_5}+ \Theta_{3,10}(q),\\
Q^{\text{th}}_{4,10}(q) &=  \frac{1}{11} [-9,-13,-13,20,-2;0]_{10}= \frac{13}{11}\frac{J_{11}^2}{P_5} + [-2,0,0,3,1;0]_{10}=:\frac{13}{11}\frac{J_{11}^2}{P_5}+ \Theta_{4,10}(q),\\
Q^{\text{th}}_{5,10}(q) &=  \frac{1}{11} [-9,9,9,-13,-2;0]_{10}= -\frac{9}{11}\frac{J_{11}^2}{P_5} + [0,0,0,-2,-1;0]_{10}=:-\frac{9}{11}\frac{J_{11}^2}{P_5}+ \Theta_{5,10}(q).
\end{align*}

In the next proof we will show how to obtain these formulas.

\begin{proof}[Proof of Theorem \ref{theorem:rankdissection}]
Recall that the rank generating function has the form
\begin{equation*}
R(z;q) := \sum_{n=0}^{\infty} \sum_{m=-\infty}^{\infty} N(m,n)z^m q^n = \sum_{n=0}^{\infty} \frac{q^{n^2}}{(zq)_n(z^{-1}q)_n}.
\end{equation*}
Let the $P_{j,m}(q)$ to be elements of the $11$-dissection of $R(\zeta_{11}^j;q)$:
\begin{equation*}
R(\zeta_{11}^j;q) =: \sum_{m=0}^{10}  P_{j,m}(q^{11})q^m.
\end{equation*}
So reformulation of this definition is
\begin{equation*} 
P_{j,m}(q) := \sum_{n=0}^{\infty}\left(\sum_{k=0}^{10}N(k,11,11n+m)\zeta_{11}^{kj}\right)q^n.
\end{equation*}
Note that $P_{j,m}(q)$ can be found from $P_{1,m}(q)$ by taking $\zeta_{11}^{j}$ instead of $\zeta_{11}$. Recall the definition of the Dedekind eta-function 
\begin{gather*}
\eta(z) := q^{\frac{1}{24}}\prod_{n=1}^{\infty} (1-q^n)
\end{gather*}
and the notation of Biagioli \cite{Bia}
\begin{gather*} 
f_{p,k}(z) := q^{\frac{(p-2k)^2}{8p}} (q^k;q^p)_{\infty}(q^{p-k};q^p)_{\infty}(q^p;q^p)_{\infty},\\
j(p, \overrightarrow{n},z) := \eta(pz)^{n_0} \prod_{k=1}^{\frac{1}{2}(p-1)} f_{p,k}(z)^{n_k}.
\end{gather*}
where $p \geq 1$, $p \nmid k$, $\overrightarrow{n} \in \Z^{\frac{1}{2}(p+1)}$. In \cite[(6.13)]{G19} Garvan found $P_{1,6}(q)$
\begin{equation} \label{equation:garvanformula6}
(q^{11};q^{11})_{\infty} \sum_{n=0}^{\infty}\left(\sum_{k=0}^{10}N(k,11,11n+6)\zeta_{11}^{k}\right)q^{n+1} = \sum_{r=1}^{5} c_{6, r} j(11, \pi_{r}(\overrightarrow{n}),z)
\end{equation}
where $c_{6, r} \in \Z[\zeta_{11}]$ are given explicitly in \cite[Section 6.4]{G19}, $\overrightarrow{n}=(15,-4,-2,-3,-2,-2)$ and $\pi_{r}$ is the permutation on $\{1,2,3,4,5\}$, defined as $\pi_{r}(i) = i'$, where $ri'\equiv \pm i \Mod {11}$.
As described in \cite[(1.9)]{GS} for $p>3$ prime, $1\leq a \leq \frac{1}{2}(p-1)$ we denote
\begin{gather*} 
\Phi_{p,a}(q) := \begin{cases}
    \sum\limits_{n=0}^{\infty} \frac{q^{pn^2}}{(q^a;q^p)_{n+1}(q^{p-a};q^p)_{n}}, &\text{if} \ 0<6a<p,\\
    -1+\sum\limits_{n=0}^{\infty} \frac{q^{pn^2}}{(q^a;q^p)_{n+1}(q^{p-a};q^p)_{n}}, &\text{if} \ p<6a<3p.
\end{cases}
\end{gather*}
Recently Garvan and Sarma found $P_{1,7}(q)$ \cite[(7.5)]{GS}
\begin{multline} \label{equation:garvanformula7}
q^{\frac{1}{11}}(q^{11};q^{11})_{\infty} \left(\sum_{n=0}^{\infty}\left(\sum_{k=0}^{10}N(k,11,11n+7)\zeta_{11}^{k}\right ) q^{n+1} + a_{7}q^{-1} \Phi_{11,5}(q)\right) \\= \frac{f_{11,5}(z)}{f_{11,1}(z)}\sum_{r=1}^{5} c_{7,r} j(11, \pi_{r}(\overrightarrow{n}),z) + \frac{f_{11,4}(z)}{f_{11,5}(z)}\sum_{r=1}^{5} d_{7,r} j(11, \pi_{r}(\overrightarrow{n}),z)
\end{multline}
where $a_{7}, c_{7,r}, d_{7,r} \in \Z[\zeta_{11}]$ are given explicitly in \cite[Section 7.1.2]{GS}.
Also they found $P_{1, 8}(q)$ \cite[(7.6)]{GS}
\begin{multline}\label{equation:garvanformula8}
q^{\frac{2}{11}}(q^{11};q^{11})_{\infty} \sum_{n=0}^{\infty}\left(\sum_{k=0}^{10}N(k,11,11n+8)\zeta_{11}^{k}\right ) q^{n+1} \\= \frac{f_{11,4}(z)}{f_{11,1}(z)}\sum_{r=1}^{5} c_{8,r} j(11, \pi_{r}(\overrightarrow{n}),z) + \frac{f_{11,3}(z)}{f_{11,4}(z)}\sum_{r=1}^{5} d_{8,r} j(11, \pi_{r}(\overrightarrow{n}),z),
\end{multline} 
where $c_{8,r}, d_{8,r} \in \Z[\zeta_{11}]$ are given explicitly in \cite[Section 7.1.3]{GS}. Then using \cite[Theorem 4.11]{GS} we can find other $P_{1, m}(q)$ using formulas for $P_{1, 7}(q)$ and $P_{1, 8}(q)$. We derive
\begin{multline} \label{equation:garvanformula0}
q^{\frac{5}{11}}(q^{11};q^{11})_{\infty} \left(\sum_{n=0}^{\infty}\left(\sum_{k=0}^{10}N(k,11,11n)\zeta_{11}^{k}\right ) q^{n} + a_{0}\Phi_{11,2}(q)\right) \\= \frac{f_{11,2}(z)}{f_{11,4}(z)}\sum_{r=1}^{5} c_{0,r} j(11, \pi_{r}(\overrightarrow{n}),z) + \frac{f_{11,5}(z)}{f_{11,2}(z)}\sum_{r=1}^{5} d_{0,r} j(11, \pi_{r}(\overrightarrow{n}),z), 
\end{multline}
\begin{multline}\label{equation:garvanformula1}
q^{\frac{6}{11}}(q^{11};q^{11})_{\infty} \sum_{n=0}^{\infty}\left(\sum_{k=0}^{10}N(k,11,11n+1)\zeta_{11}^{k}\right ) q^{n} \\= \frac{f_{11,2}(z)}{f_{11,5}(z)}\sum_{r=1}^{5} c_{1,r} j(11, \pi_{r}(\overrightarrow{n}),z) + \frac{f_{11,4}(z)}{f_{11,2}(z)}\sum_{r=1}^{5} d_{1,r} j(11, \pi_{r}(\overrightarrow{n}),z),
\end{multline} 
\begin{multline}\label{equation:garvanformula2}
q^{\frac{7}{11}}(q^{11};q^{11})_{\infty} \sum_{n=0}^{\infty}\left(\sum_{k=0}^{10}N(k,11,11n+2)\zeta_{11}^{k}\right ) q^{n} \\= \frac{f_{11,1}(z)}{f_{11,3}(z)}\sum_{r=1}^{5} c_{2,r} j(11, \pi_{r}(\overrightarrow{n}),z) + \frac{f_{11,2}(z)}{f_{11,1}(z)}\sum_{r=1}^{5} d_{2,r} j(11, \pi_{r}(\overrightarrow{n}),z),
\end{multline}
\begin{multline}\label{equation:garvanformula3}
q^{\frac{8}{11}}(q^{11};q^{11})_{\infty} \sum_{n=0}^{\infty}\left(\sum_{k=0}^{10}N(k,11,11n+3)\zeta_{11}^{k}\right ) q^{n} \\= \frac{f_{11,3}(z)}{f_{11,2}(z)}\sum_{r=1}^{5} c_{3,r} j(11, \pi_{r}(\overrightarrow{n}),z) + \frac{f_{11,5}(z)}{f_{11,3}(z)}\sum_{r=1}^{5} d_{3,r} j(11, \pi_{r}(\overrightarrow{n}),z),
\end{multline}
\begin{multline} \label{equation:garvanformula4}
q^{\frac{9}{11}}(q^{11};q^{11})_{\infty} \left(\sum_{n=0}^{\infty}\left(\sum_{k=0}^{10}N(k,11,11n+4)\zeta_{11}^{k}\right ) q^{n} + a_{4}q^{-1}\Phi_{11,4}(q)\right) \\= \frac{f_{11,4}(z)}{f_{11,3}(z)}\sum_{r=1}^{5} c_{4,r} j(11, \pi_{r}(\overrightarrow{n}),z) + \frac{f_{11,1}(z)}{f_{11,4}(z)}\sum_{r=1}^{5} d_{4,r} j(11, \pi_{r}(\overrightarrow{n}),z), 
\end{multline}
\begin{multline}\label{equation:garvanformula5}
q^{\frac{10}{11}}(q^{11};q^{11})_{\infty} \sum_{n=0}^{\infty}\left(\sum_{k=0}^{10}N(k,11,11n+5)\zeta_{11}^{k}\right ) q^{n} \\= \frac{f_{11,5}(z)}{f_{11,4}(z)}\sum_{r=1}^{5} c_{5,r} j(11, \pi_{r}(\overrightarrow{n}),z) + \frac{f_{11,1}(z)}{f_{11,5}(z)}\sum_{r=1}^{5} d_{5,r} j(11, \pi_{r}(\overrightarrow{n}),z),
\end{multline}
\begin{multline} \label{equation:garvanformula9}
q^{\frac{3}{11}}(q^{11};q^{11})_{\infty} \left(\sum_{n=0}^{\infty}\left(\sum_{k=0}^{10}N(k,11,11n+9)\zeta_{11}^{k}\right ) q^{n+1} + a_{9}\Phi_{11,3}(q)\right) \\=\frac{f_{11,3}(z)}{f_{11,5}(z)}\sum_{r=1}^{5} c_{9,r} j(11, \pi_{r}(\overrightarrow{n}),z) + \frac{f_{11,2}(z)}{f_{11,3}(z)}\sum_{r=1}^{5} d_{9,r} j(11, \pi_{r}(\overrightarrow{n}),z), 
\end{multline}
\begin{multline} \label{equation:garvanformula10}
q^{\frac{4}{11}}(q^{11};q^{11})_{\infty} \left(\sum_{n=0}^{\infty}\left(\sum_{k=0}^{10}N(k,11,11n+10)\zeta_{11}^{k}\right ) q^{n+1} + a_{10}\Phi_{11,1}(q)\right) \\= \frac{f_{11,1}(z)}{f_{11,2}(z)}\sum_{r=1}^{5} c_{10,r} j(11, \pi_{r}(\overrightarrow{n}),z) + \frac{f_{11,3}(z)}{f_{11,1}(z)}\sum_{r=1}^{5} d_{10,r} j(11, \pi_{r}(\overrightarrow{n}),z),
\end{multline}
where $a_{i}, c_{i,r}, d_{i,r} \in \Z[\zeta_{11}]$ can be found explicitly from $a_{7}, c_{7,r}, d_{7,r}$ and $c_{8,r}, d_{8,r}$ by \cite[Theorem 4.11]{GS}. 

Then we transfer formulas (\ref{equation:garvanformula6})-(\ref{equation:garvanformula10}) to a different notation using
\begin{equation*}
f_{p,k}(z) = q^{\frac{(p-2k)^2}{8p}} J_{k, p}
\end{equation*}
and
\begin{equation*}
\Phi_{p,a}(q) = \begin{cases}
    1+q^ag(q^a;q^p), &\text{if} \ 0<6a<p,\\
    q^ag(q^a;q^p), &\text{if} \ p<6a<3p.
\end{cases}
\end{equation*}
Recall the formula for the deviation of the rank \cite[(2.10)]{M}
\begin{equation*}
D(a,r) = \frac{1}{r} \sum_{j=1}^{r-1} \zeta_{r}^{-aj}\Big(1-\zeta_r^{j}\Big)\Big(1+\zeta_r^{j}g(\zeta_r^{j};q)\Big) = \frac{1}{r} \sum_{j=1}^{r-1} \zeta_{r}^{-aj}R(\zeta_r^{j};q).
\end{equation*}
As with the crank, if we know the $11$-dissection elements $P_{j,m}(q)$ for $R(\zeta_r^{j};q)$, we can deduce the $11$-dissection elements $Q_{a,m}(q)$ for $D(a,r)$ using the previous formula. We defined earlier 
\begin{equation} \label{definition:theta2def}
\Theta(a_1,a_2,a_3,a_4,a_5) := \frac{J_{11}^6}{J_{1}^2} \Big[a_1\frac{q^{2}}{P_{4}P_{5}^2}+a_2\frac{1}
{P_{1}^2P_{3}}+a_3\frac{q}{P_{1}P_{4}^2}+a_4\frac{q}{P_{2}^2P_{5}}+a_5\frac{q}{P_{2}P_{3}^2}\Big].
\end{equation}
Also recall an important identity, which can be obtained by rearranging terms:
\begin{equation} \label{identity:rearrangingtermsP}
P_1P_2P_3P_4P_5 = J_{1}J_{11}^4.
\end{equation}
Note that (\ref{definition:theta2def}) corresponds to the right-hand side of (\ref{equation:garvanformula6}) after changing the notation and applying (\ref{identity:rearrangingtermsP}):
\begin{align*}
j(11, \pi_{1}(\overrightarrow{n}),z) &= \frac{qJ_{11}^{15}}{P_1^{4}P_2^{2}P_3^{3}P_4^{2}P_5^{2}} = \frac{J_{11}^7}{J_1^2} \cdot \frac{q}{P_1^2P_3},\\
j(11, \pi_{2}(\overrightarrow{n}),z) &= \frac{q^2J_{11}^{15}}{P_1^{2}P_2^{4}P_3^{2}P_4^{2}P_5^{3}} = \frac{J_{11}^7}{J_1^2} \cdot \frac{q^2}{P_2^2P_5},\\
j(11, \pi_{3}(\overrightarrow{n}),z) &= \frac{q^2J_{11}^{15}}{P_1^{2}P_2^{3}P_3^{4}P_4^{2}P_5^{2}}) = \frac{J_{11}^7}{J_1^2} \cdot \frac{q^2}{P_2P_3^2},\\
j(11, \pi_{4}(\overrightarrow{n}),z)&= \frac{q^2J_{11}^{15}}{P_1^{3}P_2^{2}P_3^{2}P_4^{4}P_5^{2}} = \frac{J_{11}^7}{J_1^2} \cdot \frac{q^2}{P_1P_4^2},\\
j(11, \pi_{5}(\overrightarrow{n}),z)&= \frac{q^3J_{11}^{15}}{P_1^{2}P_2^{2}P_3^{2}P_4^{3}P_5^{4}} = \frac{J_{11}^7}{J_1^2} \cdot \frac{q^3}{P_4P_5^2}.
\end{align*}
By calculations we see from (\ref{equation:garvanformula6}) that
\begin{align*}
Q_{0,6}(q) &= \frac{1}{11}\sum_{j=1}^{10} P_{j,6}(q) = \Theta(0,0,2,2,-2),\\
Q_{1,6}(q) &= \frac{1}{11}\sum_{j=1}^{10} \zeta_{11}^{-j}P_{j,6}(q) =\Theta(-1,1,-1,-2,1),\\
Q_{2,6}(q) &= \frac{1}{11}\sum_{j=1}^{10} \zeta_{11}^{-2j}P_{j,6}(q) =\Theta(1,0,-1,2,0),\\
Q_{3,6}(q) &= \frac{1}{11}\sum_{j=1}^{10} \zeta_{11}^{-3j}P_{j,6}(q) =\Theta(1,0,1,-1,-1),\\
Q_{4,6}(q) &= \frac{1}{11}\sum_{j=1}^{10} \zeta_{11}^{-4j}P_{j,6}(q) =\Theta(0,-1,1,0,2),\\
Q_{5,6}(q) &= \frac{1}{11}\sum_{j=1}^{10} \zeta_{11}^{-5j}P_{j,6}(q) =\Theta(-1,0,-1,0,-1) . 
\end{align*} 
In the same way from (\ref{equation:garvanformula7}) we see that
\begin{align}
\begin{split}\label{equation:thetaformQ7}
Q_{0,7}(q) &= q^{-1}\frac{P_5}{11P_1}\Theta(-18,0,-18,26,10) + \frac{P_4}{11P_5}\Theta(0,0,0,0,4),\\
Q_{1,7}(q) &= -q^{3}g(q^{5};q^{11}) + q^{-1}\frac{P_5}{11P_1}\Theta(37,0,4,-29,21) + \frac{P_4}{11P_5}\Theta(11,0,0,0,-7),\\
Q_{2,7}(q) &= q^{3}g(q^{5};q^{11})+q^{-1}\frac{P_5}{11P_1}\Theta(-29,0,15,26,-34) + \frac{P_4}{11P_5}\Theta(-11,0,0,0,4),\\
Q_{3,7}(q) &= q^{-1}\frac{P_5}{11P_1}\Theta(15,0,-7,-18,21) + \frac{P_4}{11P_5}\Theta(0,0,0,0,4),\\
Q_{4,7}(q) &= q^{-1}\frac{P_5}{11P_1}\Theta(4,0,-7,15,-12) + \frac{P_4}{11P_5}\Theta(0,0,0,0,-7),\\
Q_{5,7}(q) &= q^{-1}\frac{P_5}{11P_1}\Theta(-18,0,4,-7,-1) + \frac{P_4}{11P_5}\Theta(0,0,0,0,4).
\end{split}
\end{align}
From (\ref{equation:garvanformula8}) we see that
\begin{align} 
\begin{split}\label{equation:thetaformQ8}
Q_{0,8}(q) &= q^{-1}\frac{P_4}{11P_1}\Theta(6,0,42,-36,-6) + \frac{P_3}{11P_4}\Theta(0,0,-16,6,0),\\
Q_{1,8}(q) &= q^{-1}\frac{P_4}{11P_1}\Theta(6,0,-24,30,5) + \frac{P_3}{11P_4}\Theta(0,0,6,-5,0),\\
Q_{2,8}(q) &= q^{-1}\frac{P_4}{11P_1}\Theta(-5,0,9,-14,5) + \frac{P_3}{11P_4}\Theta(0,0,6,6,0),\\
Q_{3,8}(q) &= q^{-1}\frac{P_4}{11P_1}\Theta(-5,0,9,8,-17) + \frac{P_3}{11P_4}\Theta(0,0,-5,-5,0),\\
Q_{4,8}(q) &= q^{-1}\frac{P_4}{11P_1}\Theta(-5,0,-13,-3,16) + \frac{P_3}{11P_4}\Theta(0,0,-5,6,0),\\
Q_{5,8}(q) &= q^{-1}\frac{P_4}{11P_1}\Theta(6,0,-2,-3,-6) + \frac{P_3}{11P_4}\Theta(0,0,6,-5,0).
\end{split}
\end{align}
Formulas for $Q_{a,m}(q)$ in terms of $\Theta(a_1,a_2,a_3,a_4,a_5)$ for other residues can be also of interest, so we write them explicitly. From (\ref{equation:garvanformula0}) we see that
\begin{align*} 
Q_{0,0}(q) &= -2q^2g(q^{2},q^{11})+q\frac{P_2}{11P_4}\Theta(-10,-52,0,56,32) + \frac{P_5}{11P_2}\Theta(0,10,0,22,0),\\
Q_{1,0}(q) &= q^2g(q^{2},q^{11})+q\frac{P_2}{11P_4}\Theta(1,25,0,-10,-23) + \frac{P_5}{11P_2}\Theta(0,-1,0,-11,0),\\
Q_{2,0}(q) &= q\frac{P_2}{11P_4}\Theta(1,14,0,-32,-1) + \frac{P_5}{11P_2}\Theta(0,-1,0,0,0),\\
Q_{3,0}(q) &= q\frac{P_2}{11P_4}\Theta(12,-8,0,23,21) + \frac{P_5}{11P_2}\Theta(0,-1,0,0,0),\\
Q_{4,0}(q) &= q\frac{P_2}{11P_4}\Theta(1,3,0,-10,-23) + \frac{P_5}{11P_2}\Theta(0,-1,0,0,0),\\
Q_{5,0}(q) &= q\frac{P_2}{11P_4}\Theta(-10,-8,0,1,10) + \frac{P_5}{11P_2}\Theta(0,-1,0,0,0).
\end{align*}
From (\ref{equation:garvanformula1}) we see that
\begin{align*} 
Q_{0,1}(q) &= q\frac{P_2}{11P_5}\Theta(0,-6,-12,-4,-12) + \frac{P_4}{11P_2}\Theta(0,10,0,-12,0),\\
Q_{1,1}(q) &= q\frac{P_2}{11P_5}\Theta(0,5,-1,-4,10) + \frac{P_4}{11P_2}\Theta(0,-1,0,10,0),\\
Q_{2,1}(q) &= q\frac{P_2}{11P_5}\Theta(0,16,10,-15,-1) + \frac{P_4}{11P_2}\Theta(0,-1,0,-12,0),\\
Q_{3,1}(q) &= q\frac{P_2}{11P_5}\Theta(0,-6,-1,18,-12) + \frac{P_4}{11P_2}\Theta(0,-1,0,10,0),\\
Q_{4,1}(q) &= q\frac{P_2}{11P_5}\Theta(0,-6,-12,18,10) + \frac{P_4}{11P_2}\Theta(0,-1,0,-1,0),\\
Q_{5,1}(q) &= q\frac{P_2}{11P_5}\Theta(0,-6,10,-15,-1) + \frac{P_4}{11P_2}\Theta(0,-1,0,-1,0).
\end{align*}
From (\ref{equation:garvanformula2}) we see that
\begin{align*} 
Q_{0,2}(q) &= q\frac{P_1}{11P_3}\Theta(-12,30,2,-20,0) + \frac{P_2}{11P_1}\Theta(2,-2,0,0,0),\\
Q_{1,2}(q) &= q\frac{P_1}{11P_3}\Theta(-12,-36,2,13,0) + \frac{P_2}{11P_1}\Theta(2,9,0,0,0),\\
Q_{2,2}(q) &= q\frac{P_1}{11P_3}\Theta(21,19,-9,-9,0) + \frac{P_2}{11P_1}\Theta(2,-2,0,0,0),\\
Q_{3,2}(q) &= q\frac{P_1}{11P_3}\Theta(-1,8,2,13,0) + \frac{P_2}{11P_1}\Theta(-9,-2,0,0,0),\\
Q_{4,2}(q) &= q\frac{P_1}{11P_3}\Theta(10,-3,13,-9,0) + \frac{P_2}{11P_1}\Theta(2,-2,0,0,0),\\
Q_{5,2}(q) &= q\frac{P_1}{11P_3}\Theta(-12,-3,-9,2,0) + \frac{P_2}{11P_1}\Theta(2,-2,0,0,0).
\end{align*}
From (\ref{equation:garvanformula3}) we see that
\begin{align*} 
Q_{0,3}(q) &= \frac{P_3}{11P_2}\Theta(14,8,4,0,-12) + \frac{P_5}{11P_3}\Theta(0,0,-8,0,-8),\\
Q_{1,3}(q) &= \frac{P_3}{11P_2}\Theta(3,-3,4,0,21) + \frac{P_5}{11P_3}\Theta(0,0,3,0,3),\\
Q_{2,3}(q) &= \frac{P_3}{11P_2}\Theta(-8,8,-18,0,-12) + \frac{P_5}{11P_3}\Theta(0,0,3,0,3),\\
Q_{2,3}(q) &= \frac{P_3}{11P_2}\Theta(-8,-3,15,0,-1) + \frac{P_5}{11P_3}\Theta(0,0,3,0,-8),\\
Q_{2,4}(q) &= \frac{P_3}{11P_2}\Theta(3,-3,15,0,-23) + \frac{P_5}{11P_3}\Theta(0,0,-8,0,14),\\
Q_{2,5}(q) &= \frac{P_3}{11P_2}\Theta(3,-3,-18,0,21) + \frac{P_5}{11P_3}\Theta(0,0,3,0,-8).\\
\end{align*}
From (\ref{equation:garvanformula4}) we see that
\begin{align*} 
Q_{0,4}(q) &= \frac{P_4}{11P_3}\Theta(16,6,-28,26,0) + q\frac{P_1}{11P_4}\Theta(0,0,0,-6,0),\\
Q_{1,4}(q) &= \frac{P_4}{11P_3}\Theta(5,6,5,-18,0) + q\frac{P_1}{11P_4}\Theta(0,0,0,5,0),\\
Q_{2,4}(q) &= \frac{P_4}{11P_3}\Theta(-6,-5,27,4,0) + q\frac{P_1}{11P_4}\Theta(0,0,0,-6,0),\\
Q_{3,4}(q) &= \frac{P_4}{11P_3}\Theta(-17,6,-17,-7,0) + q\frac{P_1}{11P_4}\Theta(0,0,0,5,0),\\
Q_{4,4}(q) &= q^{3}g(q^{4};q^{11})+\frac{P_4}{11P_3}\Theta(-6,-5,27,-18,0) + q\frac{P_1}{11P_4}\Theta(0,0,-11,-6,0),\\
Q_{5,4}(q) &= -q^{3}g(q^{4};q^{11})+\frac{P_4}{11P_3}\Theta(16,-5,-28,26,0) + q\frac{P_1}{11P_4}\Theta(0,0,11,5,0).\\
\end{align*}
From (\ref{equation:garvanformula5}) we see that
\begin{align*} 
Q_{0,5}(q) &= \frac{P_5}{11P_4}\Theta(6,4,0,-18,24) + q\frac{P_1}{11P_5}\Theta(-4,0,0,0,4),\\
Q_{1,5}(q) &= \frac{P_5}{11P_4}\Theta(-5,4,0,15,-9) + q\frac{P_1}{11P_5}\Theta(-4,0,0,0,-7),\\
Q_{2,5}(q) &= \frac{P_5}{11P_4}\Theta(17,4,0,-7,-9) + q\frac{P_1}{11P_5}\Theta(7,0,0,0,4),\\
Q_{3,5}(q) &= \frac{P_5}{11P_4}\Theta(6,-7,0,4,24) + q\frac{P_1}{11P_5}\Theta(7,0,0,0,4),\\
Q_{4,5}(q) &= \frac{P_5}{11P_4}\Theta(-27,4,0,-7,-20) + q\frac{P_1}{11P_5}\Theta(-4,0,0,0,-7),\\
Q_{5,5}(q) &= \frac{P_5}{11P_4}\Theta(6,-7,0,4,2) + q\frac{P_1}{11P_5}\Theta(-4,0,0,0,4).
\end{align*}
From (\ref{equation:garvanformula9}) we see that
\begin{align*} 
Q_{0,9}(q) &= \frac{P_3}{11P_5}\Theta(0,14,2,-8,-36) + \frac{P_2}{11P_3}\Theta(0,0,8,0,0),\\
Q_{1,9}(q) &= \frac{P_3}{11P_5}\Theta(0, 3,-9,14,19) + \frac{P_2}{11P_3}\Theta(0,0,-3,0,0),\\
Q_{2,9}(q) &= \frac{P_3}{11P_5}\Theta(0, 3,13,-19,8) + \frac{P_2}{11P_3}\Theta(0,0,-3,0,0),\\
Q_{3,9}(q) &= q^{2}g(q^{3};q^{11})+\frac{P_3}{11P_5}\Theta(0,3,-31,14,19) + \frac{P_2}{11P_3}\Theta(0,0,-3,0,-11),\\
Q_{4,9}(q) &= -q^{2}g(q^{3};q^{11})+\frac{P_3}{11P_5}\Theta(0,-8,13,-8,-14) + \frac{P_2}{11P_3}\Theta(0,0,8,0,11),\\
Q_{5,9}(q) &= \frac{P_3}{11P_5}\Theta(0,-8,13,3,-14) + \frac{P_2}{11P_3}\Theta(0,0,-3,0,0).
\end{align*}
From (\ref{equation:garvanformula10}) we see that
\begin{align*} 
Q_{0,10}(q) &= \frac{P_1}{11P_2}\Theta(16,2,20,0,20) + q^{-1}\frac{P_3}{11P_1}\Theta(-2,0,0,0,0),\\
Q_{1,10}(q) &= \frac{P_1}{11P_2}\Theta(5,13,-2,0,-13) + q^{-1}\frac{P_3}{11P_1}\Theta(-2,0,0,0,0),\\
Q_{2,10}(q) &= -[q^{-1}+g(q;q^{11})]+\frac{P_1}{11P_2}\Theta(-39,-42,-24,0,-2) + q^{-1}\frac{P_3}{11P_1}\Theta(-2,11,0,0,0),\\
Q_{3,10}(q) &= q^{-1}+g(q;q^{11})+\frac{P_1}{11P_2}\Theta(27,46,9,0,9) + q^{-1}\frac{P_3}{11P_1}\Theta(9,-11,0,0,0),\\
Q_{4,10}(q) &= \frac{P_1}{11P_2}\Theta(-6,-9,20,0,-13) + q^{-1}\frac{P_3}{11P_1}\Theta(-2,0,0,0,0),\\
Q_{5,10}(q) &= \frac{P_1}{11P_2}\Theta(5,-9,-13,0,9) + q^{-1}\frac{P_3}{11P_1}\Theta(-2,0,0,0,0).\\
\end{align*}
To obtain $Q_{a,m}$ in the form $[c_1, c_2, c_3, c_4, c_5; c_6]_{m}$, we need to use the three-term Weierstrass relation for theta functions \cite[Proposition 2.1]{HM14}
\begin{equation*}
P_{a+c}P_{a-c}P_{b+d}P_{b-d} = P_{a+d}P_{a-d}P_{b+c}P_{b-c} + q^{b-c} P_{a+b}P_{a-b}P_{c+d}P_{c-d}.
\end{equation*}
Let us consider ten cases of the three-term Weierstrass relation, mentioned in \cite[(4.6)-(4.10)]{ASD} and \cite[(b1)-(b5)]{E2000}:
\begin{equation} \label{identity:wr1}
P_2P_4P_5^2 - P_3^2P_4P_5 + q^2P_1^2P_2P_3=0,
\end{equation} 
\begin{equation}\label{identity:wr2}
P_1P_4P_5^2 - P_2P_3P_4^2 + qP_1P_2P_3^2=0,
\end{equation}
\begin{equation}\label{identity:wr3}
P_1P_3P_5^2 - P_2^2P_4P_5 + qP_1^2P_3P_4=0,
\end{equation}
\begin{equation}\label{identity:wr4}
P_1P_4^2P_5 - P_2P_3^2P_5 + qP_1P_2^2P_4=0,
\end{equation}
\begin{equation}\label{identity:wr5}
P_1P_3P_4^2 - P_2^2P_3P_5 + qP_1^2P_2P_5=0,
\end{equation}
and
\begin{equation} \label{identity:wr6}
P_3P_5^3 - P_5P_4^3+q^3P_2P_1^3=0,
\end{equation} 
\begin{equation}\label{identity:wr7}
P_2P_5^3 - P_3P_4^3 + q^2P_1P_2^3=0,
\end{equation}
\begin{equation}\label{identity:wr8}
P_2P_4^3 - P_5P_3^3 + q^2P_4P_1^3 =0 ,
\end{equation}
\begin{equation}\label{identity:wr9}
P_1P_5^3 - P_4P_3^3 + qP_3P_2^3 = 0,
\end{equation}
\begin{equation}\label{identity:wr10}
P_1P_3^3 - P_4P_2^3 + qP_5P_1^3 = 0.
\end{equation}
Also note that another form of $\Theta$ can be obtained by applying (\ref{identity:rearrangingtermsP}):
\begin{equation*}
\Theta(a_1,a_2,a_3,a_4,a_5) = \frac{J_{11}^2}{J_{1}^3} \Big[a_1\frac{q^{2}P_1P_2P_3}{P_5}+a_2\frac{P_2P_4P_5}
{P_1}+a_3\frac{qP_2P_3P_5}{P_4}+a_4\frac{qP_1P_3P_4}{P_2}+a_5\frac{qP_1P_4P_5}{P_3}\Big].
\end{equation*}
Let us consider the case residue 7. From (\ref{equation:thetaformQ7}) we have
\begin{multline*}
Q^{\text{th}}_{a,7}(q)=q^{-1}\frac{P_5}{11P_1}\Theta(a_1,0,a_3,a_4,a_5) + \frac{P_4}{11P_5}\Theta(b_1,0,0,0,b_5) 
=\\  \frac{J_{11}^2}{J_{1}^3}\left(a_1qP_2P_3 + a_3\frac{P_2P_3P_5^2}{P_1P_4} +a_4\frac{P_3P_4P_5}{P_2}+ a_5\frac{P_4P_5^2}{P_3} + b_1\frac{q^2P_1P_2P_3P_4}{P_5^2}+ b_5\frac{qP_1P_4^2}{P_3}\right).
\end{multline*}
Then we consider
\begin{align*}
(\ref{identity:wr4})\times \frac{P_5}{P_1P_3P_4} \ \ &: \ \ \frac{P_2P_3P_5^2}{P_1P_4} = \frac{P_4P_5^2}{P_3} + \frac{qP_2^2P_5}{P_3},\\
(\ref{identity:wr1})\times \frac{1}{P_2P_3} \ \ &: \ \ \frac{P_3P_4P_5}{P_2} = q^2P_1^2+\frac{P_4P_5^2}{P_3}.
\end{align*}
After changing the terms to the new ones using the above expressions, we obtain
\begin{equation*}
Q^{\text{th}}_{a,7}(q)= \frac{J_{11}^2}{J_{1}^3}\left(c_1\frac{P_4P_5^2}{P_3} + c_2q^2P_1^2 + c_3\frac{qP_1P_4^2}{P_3}+ c_4\frac{qP_2^2P_5}{P_3} + c_5qP_2P_3+ c_6\frac{q^2P_1P_2P_3P_4}{P_5^2}\right).
\end{equation*}
As a result, we obtain $[c_1, c_2, c_3, c_4, c_5; c_6]_{7}$. Similar calculations can be done for residues $0,4,9,10$.

Another case we are going to consider as an example is residue $8$. From (\ref{equation:thetaformQ8}) we have
\begin{multline*}
Q_{a,8}(q)=q^{-1}\frac{P_4}{11P_1}\Theta(a_1,0,a_2,a_3,a_4) + \frac{P_3}{11P_4}\Theta(0,0,b_3,b_4,0) =\\ \frac{J_{11}^2}{J_{1}^3} \left(a_1\frac{qP_2P_3P_4}{P_5} + a_2\frac{P_2P_3P_5}{P_1} + a_3\frac{P_3P_4^2}{P_2} + a_4\frac{P_4^2P_5}{P_3} + b_3\frac{qP_2P_3^2P_5}{P_4^2}+b_4\frac{qP_1P_3^2}{P_2}\right).
\end{multline*}
Then we consider
\begin{align*}
(\ref{identity:wr2})\times \frac{q}{P_4P_5} \ \ &: \ \ \frac{qP_2P_3P_4}{P_5} = qP_1P_5 + \frac{q^2P_1P_2P_3^2}{P_4P_5},\\
(\ref{identity:wr5})\times \frac{1}{P_1P_2} \ \ &: \ \ \frac{P_2P_3P_5}{P_1} = \frac{P_3P_4^2}{P_2} + qP_1P_5,\\
(\ref{identity:wr1})\times \frac{P_4}{P_2P_3P_5} \ \ &: \ \ \frac{P_4^2P_5}{P_3} = \frac{P_3P_4^2}{P_2} - \frac{q^2P_1^2P_4}{P_5},\\
(\ref{identity:wr4})\times \frac{q}{P_4^2} \ \ &: \ \ \frac{qP_2P_3^2P_5}{P_4^2} = qP_1P_5 + \frac{q^2P_1P_2^2}{P_4},\\
(\ref{identity:wr1})\times \frac{qP_1}{P_2P_4P_5} \ \ &: \ \ \frac{qP_1P_3^2}{P_2} = \frac{q^3P_1^3P_3}{P_4P_5} + qP_1P_5.
\end{align*}
After changing the terms to the new ones using the above expressions, we obtain
\begin{equation*}
Q_{a,8}(q)=
c_1qP_1P_5 + c_2\frac{q^3P_1^3P_3}{P_4P_5} + c_3 \frac{q^2P_1^2P_4}{P_5}+c_4 \frac{q^2P_1P_2^2}{P_4}+c_5 \frac{q^2P_1P_2P_3^2}{P_4P_5}+c_6 \frac{P_3P_4^2}{P_2}.
\end{equation*}
As a result, we obtain $[c_1, c_2, c_3, c_4, c_5; c_6]_{8}$. Similar calculations can be done for residues $1,2,3,5$.
\end{proof}

\section{Properties of partition function, crank and rank modulo $11$}\label{section:oldresultsproofs}
\subsection{Equalities between cranks modulo $11$} \label{subsection:crankequal}Garvan \cite[(1.51)-(1.67)]{G88} found equalities between cranks modulo $11$. We present his results in the following theorem.
\begin{theorem}[{\cite[(1.51)-(1.67)]{G88}}]\label{theorem:crankeq}  
Consider $n\geq 0$. For $M_i = M(i,11,11n)$ we have 
\begin{equation*}
M_1 = M_2 = M_3 = M_4 = M_5.
\end{equation*}
For $M_i = M(i,11,11n+1)$ we have 
\begin{equation*}
M_0 + M_1 = 2M_2 \ \ \text{and} \ \ M_2 = M_3 = M_4 = M_5.
\end{equation*}
For $M_i = M(i,11,11n+2)$ we have 
\begin{equation*}
M_0 = M_1 = M_3 = M_4 = M_5.
\end{equation*}
For $M_i = M(i,11,11n+3)$ we have 
\begin{equation*}
M_0=M_3 \ \ \text{and} \ \ M_1 = M_2 = M_4 = M_5.
\end{equation*}
For $M_i = M(i,11,11n+4)$ we have 
\begin{equation*}
M_0=M_2=M_4 \ \ \text{and} \ \ M_1=M_3=M_5.
\end{equation*}
For $M_i = M(i,11,11n+5)$ we have 
\begin{equation*}
M_0=M_1=M_3=M_5 \ \ \text{and} \ \ M_2=M_4.
\end{equation*}
For $M_i = M(i,11,11n+6)$ we have 
\begin{equation*}
M_0=M_1=M_2=M_3=M_4=M_5=\frac{p(11n+6)}{11}.
\end{equation*}
For $M_i = M(i,11,11n+7)$ we have 
\begin{equation*}
M_0=M_2=M_3=M_5 \ \ \text{and} \ \ M_1=M_4.
\end{equation*}
For $M_i = M(i,11,11n+8)$ we have 
\begin{equation*}
M_0=M_2=M_5 \ \ \text{and} \ \ M_1=M_3=M_4.
\end{equation*}
For $N_i = N(i,11,11n+9)$ and $M_i = M(i,11,11n+9)$ we have 
\begin{equation*}
M_0=M_4 \ \ \text{and} \ \ M_1=M_2=M_3=M_5.
\end{equation*}
For $N_i = N(i,11,11n+10)$ and $M_i = M(i,11,11n+10)$ we have 
\begin{equation*}
M_0=M_1=M_2=M_4=M_5.
\end{equation*}
\end{theorem}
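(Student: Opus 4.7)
The plan is to deduce every equality in Theorem \ref{theorem:crankeq} directly from the $11$-dissection of Theorem \ref{theorem:crankdissection} by matching coefficients residue by residue. The key structural fact is that each summand $a_j q^j \cdot (\text{ratio of } X_i\text{'s})$ in $v_{11}(a_0, a_1, \ldots, a_{10})$ equals $q^j$ times a formal power series in $q^{11}$, since $J_{121}$ and each $X_i = J_{11i, 121}$ lie in $\Z[[q^{11}]]$. Consequently, within any fixed residue class $r \pmod{11}$, at most one of the ten summands contributes to the coefficients of $D_C(a, 11)$, and the exponents $0, 1, 2, 3, 4, 5, 7, 19, 9, 10$ occurring in Definition \ref{definition:v11G11Theta} hit each residue exactly once, except for $r = 6$, which has no slot at all.

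Fixing $r \in \{0, 1, 2, 3, 4, 5, 7, 8, 9, 10\}$ and letting $B_r(q^{11})$ denote the universal series (namely $J_{121}^2/X_1$ for $r=0$, $q J_{121}^2 X_5/(X_2 X_3)$ for $r=1$, and so on) that appears identically in all six expressions for $D_C(a, 11)$, extraction of the coefficient of $q^{11n+r}$ gives
\begin{equation*}
M(a, 11, 11n+r) - \frac{p(11n+r)}{11} = \frac{a_r^{(a)}}{11}\, c_r(n),
\end{equation*}
where $a_r^{(a)}$ is the $r$-th entry of the tuple in $D_C(a, 11) = v_{11}(\cdots)$ from Theorem \ref{theorem:crankdissection} and $c_r(n) := [q^{11n}] B_r(q^{11})$ is independent of $a$. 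Hence any $\Z$-linear relation among $a_r^{(0)}, \ldots, a_r^{(5)}$ transfers verbatim to the same relation among $M(0, 11, 11n+r), \ldots, M(5, 11, 11n+r)$; in particular, equality of two entries yields equality of the two cranks for all $n \geq 0$. For the remaining residue $r = 6$, the absence of any $a_6 q^6(\cdots)$ slot in $v_{11}$ forces $[q^{11n+6}] D_C(a, 11) = 0$ for all $a$, which recovers Ramanujan's congruence $M(a, 11, 11n+6) = p(11n+6)/11$ directly.

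All equalities in Theorem \ref{theorem:crankeq} are then read off the tuples: residue $0$ coordinates $(10, -1, -1, -1, -1, -1)$ give $M_1 = M_2 = M_3 = M_4 = M_5$; residue $1$ coordinates $(-12, 10, -1, -1, -1, -1)$ give $M_2 = M_3 = M_4 = M_5$ together with the linear relation $-12 + 10 = 2(-1)$, hence $M_0 + M_1 = 2 M_2$; residue $2$ coordinates $(-2, -2, 9, -2, -2, -2)$ give $M_0 = M_1 = M_3 = M_4 = M_5$; residues $3, 4, 5, 7, 8, 9, 10$ are dispatched analogously by inspecting the appropriate column of the six tuples. The symmetry $M(a, 11, n) = M(11-a, 11, n)$ then extends all equalities from $0 \leq a \leq 5$ to $0 \leq a \leq 10$.

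There is essentially no obstacle to this program: once the coefficient-matching principle is in place, the proof reduces to pure bookkeeping, and the tuples of Theorem \ref{theorem:crankdissection} have evidently been arranged so that the pattern of repeated entries within each column directly encodes Garvan's equalities. The one notational detail worth flagging is that the $a_8$-slot $a_8 q^{19} X_1/(X_4 X_5)$ contributes to residue $8 \pmod{11}$ because $19 \equiv 8$, so that the ten surviving exponents exhaust $\{0,1,\ldots,10\} \setminus \{6\}$ exactly as claimed.
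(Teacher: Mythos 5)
Your proposal is correct and follows essentially the same route as the paper: the paper's proof likewise reads off all equalities by comparing the coefficient tuples in Theorem \ref{theorem:crankdissection} residue by residue, and notes that the missing $q^6$-slot in $v_{11}$ gives the residue-$6$ case. Your write-up simply makes the coefficient-matching principle (each $v_{11}$-summand landing in a single residue class, with $q^{19}$ covering residue $8$) explicit, which the paper leaves implicit.
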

\begin{proof}[Proof of Theorem \ref{theorem:crankeq}] By comparing respective coefficients in the $11$-dissections in Theorem \ref{theorem:crankdissection} we can directly deduce the following equalities. Also note that there is no element corresponding to $q^6$ in $11$-dissection of the deviation of the crank, so we have the corresponding equality for residue $6$.\qedhere
\end{proof}

\subsection{Partition function congruences modulo $11$} \label{subsection:partfunccong} Atkin and Swinnerton-Dyer in \cite[Theorem 3]{ASD} realised the following congruences.
\begin{theorem}[{\cite[Theorem 3]{ASD}}] \label{theorem:pcong}We have
\begin{align*} 
\sum_{n=0}^{\infty} p(11n)q^n &\equiv \frac{J_{11}^2}{P_1} \Mod {11},\\
\sum_{n=0}^{\infty}p(11n+1)q^n &\equiv \frac{J_{11}^2 P_5}{P_2P_3} \Mod {11},\\
\sum_{n=0}^{\infty}p(11n+2)q^n &\equiv 2\frac{J_{11}^2 P_3}{P_1P_4} \Mod {11},\\
\sum_{n=0}^{\infty}p(11n+3)q^n &\equiv 3\frac{J_{11}^2 P_2}{P_1P_3}   \Mod {11},\\
\sum_{n=0}^{\infty}p(11n+4)q^n &\equiv 5\frac{J_{11}^2}{P_2}    \Mod {11}, \\
\sum_{n=0}^{\infty}p(11n+5)q^n &\equiv 7\frac{J_{11}^2 P_4}{P_2P_5}    \Mod {11}\\
\sum_{n=0}^{\infty}p(11n+7)q^n &\equiv 4\frac{J_{11}^2}{P_3} \Mod {11}, \\
\sum_{n=0}^{\infty}p(11n+8)q^n &\equiv 6\frac{J_{11}^2 qP_1}{P_4P_5}  \Mod {11}, \\
\sum_{n=0}^{\infty}p(11n+9) q^n &\equiv 8\frac{J_{11}^2}{P_4}    \Mod {11}, \\
\sum_{n=0}^{\infty}p(11n+10)q^n &\equiv 9\frac{J_{11}^2}{P_5}    \Mod {11}.
\end{align*}
\end{theorem}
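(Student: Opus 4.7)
\textbf{Proof plan for Theorem \ref{theorem:pcong}.} The plan is to extract the congruences directly from the crank dissection of $D_C(0,11)$ given by Theorem \ref{theorem:crankdissection}. The starting point is the elementary identity coming from the definition of the crank deviation,
\[
11 \cdot [q^n] D_C(a,11) = 11 M(a,11,n) - p(n),
\]
which, since $11M(a,11,n) \equiv 0 \pmod{11}$, yields
\[
-\sum_{n=0}^{\infty} p(n) q^n \equiv 11 D_C(a,11) \pmod{11}
\]
for every residue $a$. Taking $a=0$ and invoking Theorem \ref{theorem:crankdissection}, the factor $\tfrac{1}{11}$ built into the definition of $v_{11}$ cancels, giving
\[
11 D_C(0,11) = J_{121}^2\Big(10\tfrac{1}{X_1} - 12 q\tfrac{X_5}{X_2 X_3} - 2q^2\tfrac{X_3}{X_1 X_4} + 8q^3\tfrac{X_2}{X_1 X_3} + 6q^4\tfrac{1}{X_2} + 4q^5\tfrac{X_4}{X_2 X_5} - 4q^7\tfrac{1}{X_3} - 6q^{19}\tfrac{X_1}{X_4 X_5} - 8q^9\tfrac{1}{X_4} + 2q^{10}\tfrac{1}{X_5}\Big).
\]

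Next I would observe that the substitution $q \mapsto q^{11}$ sends $J_{11} \mapsto J_{121}$ and $P_i \mapsto X_i$, so $J_{121}$ and each $X_i$ are genuinely functions of $q^{11}$. Consequently the right-hand side above is already displayed as an $11$-dissection: each summand $a_m q^m R_m(X,J_{121})$ is supported on exactly one residue class modulo $11$, namely $m \bmod 11$ (the $q^{19}$ term lands in residue $8$ once one writes $q^{19} = q^8 \cdot q^{11}$). Crucially, no $q^6$ summand appears, which is precisely why $p(11n+6) \equiv 0 \pmod{11}$ falls out automatically, thereby recovering Ramanujan's third congruence.

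Extracting the residue $m$ component of the congruence for $m \in \{0,1,2,3,4,5,7,8,9,10\}$ and undoing the substitution $q^{11} \mapsto q$ (i.e.\ replacing $X_i, J_{121}$ by $P_i, J_{11}$) gives
\[
-\sum_{n=0}^{\infty} p(11n+m) q^n \equiv a_m G_m(q) \pmod{11},
\]
where $G_m$ is the $P$-version of the $X$-ratio paired with $q^m$, multiplied by $J_{11}^2$. Reducing the integer coefficients modulo $11$ ($10 \equiv -1$, $-12 \equiv -1$, $-2 \equiv -2$, $8 \equiv -3$, $6 \equiv -5$, $4 \equiv -7$, $-4 \equiv -4$, $-6 \equiv -6$, $-8 \equiv 3$, $2 \equiv -9$) and flipping signs produces each of the ten congruences in the statement of Theorem \ref{theorem:pcong} on the nose.

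There is essentially no mathematical obstacle once Theorem \ref{theorem:crankdissection} is in hand; the whole content has been absorbed into the crank dissection. The only care required is bookkeeping: keeping track of the residue class of each summand of $v_{11}$ (in particular that $q^{19}$ contributes to residue~$8$, not residue~$10$), and checking that the rewriting $X_i \mapsto P_i$ produces precisely the theta-quotients displayed in Theorem \ref{theorem:pcong}.
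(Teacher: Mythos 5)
Your proposal is correct and is essentially the paper's own argument: the paper likewise takes the dissection elements of $D_C(0,11)$ from Theorem \ref{theorem:crankdissection} (e.g.\ $\sum_{n\ge 0}\bigl(M(0,11,11n)-\tfrac{p(11n)}{11}\bigr)q^n=\tfrac{10}{11}\tfrac{J_{11}^2}{P_1}$), multiplies by $-11$, and reduces modulo $11$, exactly as you do residue by residue (including the $q^{19}$ term landing in residue $8$). Your coefficient bookkeeping checks out against the stated constants, so no further comment is needed.
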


\begin{proof}[Proof of Theorem \ref{theorem:pcong}] Using Theorem \ref{theorem:crankdissection} it is obvious how to obtain such congruences. For example, we can take
\begin{equation*}
\sum_{n=0}^{\infty}\left(M(0,11,11n) - \frac{p(11n)}{11}\right)q^n = \frac{10}{11}\frac{J_{11}^2}{P_1},
\end{equation*}
multiply it by $-11$ and take modulo $11$.
\end{proof}
\begin{remark}
Elements of $11$-dissection for $p(n)$ can be obtained using \cite[Lemma 4]{K} and they are found explicitly in terms of theta quotients by Bilgici and Ekin \cite{GE}. Another forms for the $11$-dissection element $\sum_{n\geq 0}p(11n+6)q^n$, which represent explicitly its modular properties, can be found in \cite{PR}.
\end{remark}

\subsection{Linear rank congruences modulo $11$}\label{subsection:linearrangcong} Atkin and Hussain \cite{AH} studied the rank modulo $11$ and for each residue they found linear congruences between ranks \cite[(9.16)]{AH}. We present their results in the following theorem. 
\begin{theorem} [{\cite[(9.16)]{AH}}] \label{theorem:linrankcong} Consider $n\geq 0$. For $N_i = N(i,11,11n)$ we have
\begin{equation*}
N_2-5N_3-2N_4+6N_5 \equiv 0 \Mod {11}.
\end{equation*}
For $N_i = N(i,11,11n+1)$ we have
\begin{equation*}
N_1-6N_2+4N_3+3N_4-2N_5 \equiv 0 \Mod {11}.
\end{equation*}
For $N_i = N(i,11,11n+2)$ we have
\begin{equation*}
 N_0 + 4N_2 - 6N_4 + N_5 \equiv 0 \Mod {11}.   
\end{equation*}
For $N_i = N(i,11,11n+3)$ we have
\begin{equation*}
N_0 + 3N_1 - N_2 + 2N_3 - N_4 -4N_5 \equiv 0 \Mod {11}.    
\end{equation*}
For $N_i = N(i,11,11n+4)$ we have
\begin{equation*}
N_0 + 3N_1 -2N_2 - 4N_3 + N_4 + N_5 \equiv 0 \Mod {11}.    
\end{equation*}
For $N_i = N(i,11,11n+5)$ we have
\begin{equation*}
 N_0 - 5N_1 - N_2 + N_3 + 5N_4 - N_5 \equiv 0 \Mod {11}.   
\end{equation*}
For $N_i = N(i,11,11n+6)$ we have
\begin{equation*}
N_1 -5N_2 - N_3 +N_4 +4N_5 \equiv 0 \Mod {11}.    
\end{equation*}
For $N_i = N(i,11,11n+7)$ we have
\begin{equation*}
    N_0 - 2N_1 -2N_2 + 5N_3 +2N_4 -4N_5 \equiv 0 \Mod {11}.
\end{equation*}
For $N_i = N(i,11,11n+8)$ we have
\begin{equation*}
 N_0 + 5N_1 + 2N_2 + N_3 -3N_4 - 6N_5 \equiv 0 \Mod {11}.   
\end{equation*}
For $N_i = N(i,11,11n+9)$ we have
\begin{equation*}
 N_0 -4N_1 +3N_2 -N_3 -N_4 +2N_5 \equiv 0 \Mod {11}.   
\end{equation*}
For $N_i = N(i,11,11n+10)$ we have
\begin{equation*}
N_0 -6N_1 + N_4 + 4N_5 \equiv 0 \Mod {11}.
\end{equation*}
\end{theorem}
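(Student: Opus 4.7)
I will use the explicit $11$-dissection of the rank deviation given in Section \ref{section:devrank}, which writes each dissection element in the form
$$
Q_{a,m}(q) \;=\; \frac{\lambda_{a,m}}{11}\,F_m(q) + \Theta_{a,m}(q) + Q^{\mathrm{mck}}_{a,m}(q),
$$
where $F_m(q)$ is the common theta product attached to $p(11n+m)$ in Theorem \ref{theorem:pcong} (so $F_0=J_{11}^2/P_1$, $F_1=J_{11}^2P_5/(P_2P_3)$, and so on), $\lambda_{a,m}\in\Z$ is the integer numerator recorded in Section \ref{section:devrank} (for instance $\lambda_{0,0}=10$ and $\lambda_{a,0}=-1$ for $a=1,\ldots,5$), $\Theta_{a,m}(q)$ is a bracket $[\cdot,\cdot,\cdot,\cdot,\cdot;\cdot]_m$ from Definition \ref{definition:residuebrackets}, and $Q^{\mathrm{mck}}_{a,m}(q)$ is the mock-theta contribution, nonzero only for the ten pairs $(a,m)$ listed in Theorem \ref{theorem:rankdissection}. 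Since $N(a,11,11n+m)=[q^n]Q_{a,m}(q)+p(11n+m)/11$, any integer vector $(c_0,\ldots,c_5)$ with $\sum_a c_a=0$ satisfies
$$
\sum_{a=0}^{5} c_a\,N(a,11,11n+m) \;=\; [q^n]\sum_{a=0}^{5} c_a\,Q_{a,m}(q).
$$

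\textbf{Reduction to a bracket check.} For each residue $m\in\{0,1,\ldots,10\}$ I read off the coefficient vector $(c_0,\ldots,c_5)$ from the relevant line of Theorem \ref{theorem:linrankcong} and verify three elementary conditions: (i) $\sum_a c_a=0$; (ii) $\sum_a c_a\,\lambda_{a,m}\equiv 0\pmod{11}$, so that the integer $k:=(\sum_a c_a\,\lambda_{a,m})/11$ is well defined; and (iii) $\sum_a c_a\,Q^{\mathrm{mck}}_{a,m}(q)=0$ identically, which holds because the mock parts always come in opposite pairs (for instance $Q^{\mathrm{mck}}_{0,0}+2Q^{\mathrm{mck}}_{1,0}=0$ and $Q^{\mathrm{mck}}_{4,4}+Q^{\mathrm{mck}}_{5,4}=0$). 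The desired congruence then reduces to the integer series identity
$$
k\,F_m(q) + \sum_{a=0}^{5} c_a\,\Theta_{a,m}(q) \;\equiv\; 0\pmod{11},
$$
which, by linearity of the bracket $[\cdot,\cdot,\cdot,\cdot,\cdot;\cdot]_m$ in its six arguments together with the O'Brien identity $J_{11}^2=[1,-1,-1,-1,-1]$ from (\ref{identity:obrien}), is the elementary claim that the six integer coordinates of the resulting combined bracket are entrywise divisible by $11$.

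\textbf{Illustration and obstacle.} As a prototype, for $m=0$ and $(c_0,\ldots,c_5)=(0,0,1,-5,-2,6)$ one has $\sum c_a=0$, $\sum c_a\lambda_{a,0}=-1+5+2-6=0$, the mock parts vanish because $c_0=c_1=0$, and a direct calculation yields $\sum c_a\,\Theta_{a,0}(q)=[0,-11,0,11,-11;0]_0$, which is manifestly divisible by $11$. For $m=4$ with $(1,3,-2,-4,1,1)$, the tables give $\lambda_{a,4}=(6,-5,6,17,6,-5)$, so $\sum c_a\lambda_{a,4}=-88$ and $k=-8$; then
$$
-8\,F_4(q) + \sum c_a\,\Theta_{a,4}(q) \;=\; [-8,8,8,8,8;0]_4 + [8,-8,-19,-8,3;0]_4 \;=\; [0,0,-11,0,11;0]_4,
$$
which is again divisible by $11$. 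The residue $m=6$ is the simplest: Ramanujan's congruence $p(11n+6)\equiv 0\pmod{11}$ forces every $\lambda_{a,6}=0$ and every mock contribution to vanish, so only (i) and the bracket check remain, the latter carried out in the $\Theta(a_1,\ldots,a_5)$ notation of Definition \ref{definition:v11G11Theta}. The remaining eight residues follow by the same pattern. The main obstacle is not conceptual but purely a matter of bookkeeping: eleven separate bracket computations must be performed, each routine once the $\lambda$-data and the $\Theta$-brackets of Section \ref{section:devrank} are tabulated.
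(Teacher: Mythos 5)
Your proposal is correct and follows essentially the same route as the paper: both take the linear combination of the dissection elements $Q_{a,m}(q)$ from Section \ref{section:devrank}, use the vanishing coefficient sum to drop the $p(11n+m)/11$ terms, and check that the resulting bracket (after the mock contributions cancel and the $\frac{\lambda_{a,m}}{11}$-parts are absorbed via (\ref{identity:obrien})) has all entries divisible by $11$. Your treatment is just a more explicit bookkeeping of what the paper's one-example proof calls "straightforward," and your residue $0$ and residue $4$ computations agree with the paper's data.
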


\begin{proof}[Proof of Theorem \ref{theorem:linrankcong}] It is straightforward now to obtain such congruences by using the calculations of the $11$-dissection of the deviation of the rank found in Section \ref{section:devrank}. For example, let us consider the case of residue $0$. We find that 
\begin{equation*}
Q_{2,0}(q)-5Q_{3,0}(q)-2Q_{4,0}(q)+6Q_{5,0}(q) = 11[0,-1,0,1,-1;0]_{0}.
\end{equation*}
As the sum of coefficients in the above sum is zero, we see that the coefficient of $q^n$ on the left-hand side of the previous sum is
\begin{equation*}
N(2,11,11n)-5N(3,11,11n)-2N(4,11,11n)+6N(5,11,11n).
\end{equation*}
Taking expression modulo $11$ we obtain the desired result. 
\end{proof}

\section{Rank-crank inequalities, positivity techniques, and positivity conjectures}
Firstly we give new proofs of work of Ekin \cite{E} in order to motivate our use of positivity techniques and to give context to our positivity conjectures.

\subsection{Inequalities between cranks modulo $11$}\label{subsection:crankinequal} Ekin  \cite[(21)-(28)]{E} considered inequalities between cranks modulo $11$. We state the results in the following theorem and conjecture.
\begin{theorem}[{\cite[(21)-(28)]{E}}] \label{theorem:crankineq} For $n\geq 0$ we have
\begin{gather*}
M(0,11,11n) \geq \frac{p(11n)}{11} \geq M(1,11,11n), \\
M(1,11,11n+1) \geq \frac{p(11n+1)}{11} \geq M(2,11,11n+1) \geq M(0,11,11n+1),\\
M(2,11,11n+2) \geq \frac{p(11n+2)}{11} \geq M(0,11,11n+2), \\
M(0,11,11n+3) \geq \frac{p(11n+3)}{11} \geq M(1,11,11n+3),  \\
M(0,11,11n+4) \geq \frac{p(11n+4)}{11} \geq M(1,11,11n+4), \\
M(0,11,11n+5) \geq \frac{p(11n+5)}{11} \geq M(2,11,11n+5),  \\
M(1,11,11n+7) \geq \frac{p(11n+7)}{11} \geq M(0,11,11n+7), \\
M(1,11,11n+9) \geq \frac{p(11n+9)}{11} \geq M(0,11,11n+9),  \\
M(0,11,11n+10) \geq \frac{p(11n+10)}{11} \geq M(3,11,11n+10).
\end{gather*}
\end{theorem}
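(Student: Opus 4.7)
The plan is to read off each inequality directly from the $11$-dissection formulas in Theorem \ref{theorem:crankdissection}. For any residue $r \in \{0,1,2,3,4,5,7,8,9,10\}$, the $r$-th summand of $v_{11}$ is a scalar $a_r$ multiplying a $q^r$-shifted theta quotient in $q^{11}$. Writing $D_C(a,11)=v_{11}(a_0^{(a)},\ldots,a_{10}^{(a)})$ as in Theorem \ref{theorem:crankdissection} and extracting the residue-$r$ dissection piece (equivalently, peeling off $q^r$ and then setting $q^{11}\mapsto q$) gives
\[
\sum_{n\geq 0}\Big(M(a,11,11n+r)-\tfrac{p(11n+r)}{11}\Big)q^n \;=\; \frac{a_r^{(a)}}{11}\,t_r(q),
\]
where $t_r(q)$ is one of the ten theta quotients $J_{11}^2/P_1$, $J_{11}^2 P_5/(P_2 P_3),\ldots,J_{11}^2/P_5$ appearing on the right-hand sides of Theorem \ref{theorem:pcong}. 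The absence of a $q^6$-term in $v_{11}$ is precisely Garvan's equality $M(a,11,11n+6)=p(11n+6)/11$ recorded in Theorem \ref{theorem:crankeq}.

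Each inequality of the form $M(a,11,11n+r)\geq p(11n+r)/11$ is thus equivalent to the pointwise statement $a_r^{(a)}\cdot[q^n]t_r(q)\geq 0$ (and the opposite inequality corresponds to $a_r^{(a)}\leq 0$), while a comparison $M(a,11,11n+r)\geq M(b,11,11n+r)$ is equivalent to $(a_r^{(a)}-a_r^{(b)})\cdot[q^n]t_r(q)\geq 0$. A direct inspection of the coefficient lists in Theorem \ref{theorem:crankdissection} confirms that for every case of Theorem \ref{theorem:crankineq} the relevant scalar carries the predicted sign: for instance $a_0^{(0)}=10>0$ and $a_0^{(1)}=-1<0$ produce $M(0,11,11n)\geq p(11n)/11\geq M(1,11,11n)$, and $a_1^{(2)}-a_1^{(0)}=-1-(-12)=11>0$ yields $M(2,11,11n+1)\geq M(0,11,11n+1)$. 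The remaining cases are verified identically by reading off the corresponding entry of each tuple.

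With these sign checks completed, the entire theorem collapses to a single uniform claim: each of the ten theta quotients $t_r(q)$ has nonnegative Fourier coefficients. This is the main obstacle and the heart of the proof. I would establish it via the positivity toolkit developed in Section \ref{subsection:positivitytechniques}, combined with the Weierstrass three-term relations (\ref{identity:wr1})--(\ref{identity:wr10}), which allow one to rewrite each $t_r(q)$ as a sum of products of manifestly nonnegative $q$-series (for example reciprocals of Pochhammer factors $(q^j;q^{11})_\infty^{-1}$ times a positive combinatorial series). Once these ten positivity statements are in hand, every inequality listed in Theorem \ref{theorem:crankineq} drops out of the coefficient comparison above.
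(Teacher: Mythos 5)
Your reduction is exactly the paper's: Theorem \ref{theorem:crankdissection} writes each crank dissection element as an explicit integer multiple of one of the quotients listed in Theorem \ref{theorem:pcong}, and the inequalities then follow from the signs of those integers once the quotients are known to have nonnegative Fourier coefficients. Your sign checks are correct, and you correctly identify the positivity of the quotients as the heart of the matter. The problems are with what you claim there and with how you propose to prove it.

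First, your ``single uniform claim'' is false as stated: the residue-$8$ quotient $J_{11}^2qP_1/(P_4P_5)=q-q^2+q^5+\cdots$ has a negative coefficient at $q^2$, which is precisely why residue $8$ is absent from Theorem \ref{theorem:crankineq} and survives only as Conjecture \ref{conjecture:crankineq8} (the paper remarks that nonnegativity of the remaining coefficients is open). Only the nine quotients attached to residues $0,\dots,5,7,9,10$ are needed, and your proof plan should not rest on the tenth. Second, the mechanism you invoke for the needed positivity does not deliver it. The toolkit of Section \ref{subsection:positivitytechniques} (Lemma \ref{lemma:posthetablock}) is itself deduced from Lemma \ref{lemma:blockscrankpos}, i.e.\ from exactly the statements you are trying to prove, so appealing to it here is circular; moreover it only handles products in which the factors $P_i/J_{11}$ occur with nonpositive exponents, whereas four of your quotients, such as $J_{11}^2P_2/(P_1P_3)$ and $J_{11}^2P_5/(P_2P_3)$, carry a $P_i$ in the numerator. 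The Weierstrass relations (\ref{identity:wr1})--(\ref{identity:wr10}) relate the bracket monomials of Definition \ref{definition:mainbracket}; they do not by themselves decompose these mixed quotients into manifestly nonnegative series. The paper's actual proof of Lemma \ref{lemma:blockscrankpos} uses Jacobi's triple product identity for the five quotients $J_{11}^2/P_a$ (this is the ``reciprocal Pochhammer times a positive series'' shape you gesture at, and it is fine for those five), the quintuple product identity (\ref{identity:quinprod}) for $J_{11}^2P_{2a}/(P_aP_{3a})$ with $a=1,2,3$, and a citation to Berkovich--Garvan \cite{BG} for the remaining case $J_{11}^2P_3/(P_1P_4)$. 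Without these inputs your key step is asserted rather than proved.
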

\begin{conjecture} \label{conjecture:crankineq8} For $n \geq 0$, $n \neq 2$ we have
\begin{equation*}
M(1,11,11n+8) \geq \frac{p(11n+8)}{11} \geq M(0,11,11n+8).
\end{equation*}
\end{conjecture}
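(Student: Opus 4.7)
The plan is to follow the paper's strategy: first reduce the two-sided inequality to a single positivity statement about Fourier coefficients of a theta quotient, and then attempt to apply the positivity techniques developed in Section~\ref{subsection:positivitytechniques}.

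First, I would use Theorem~\ref{theorem:crankdissection} to isolate the coefficients of $q^{11n+8}$ in $D_C(0,11)$ and $D_C(1,11)$. In the definition of $v_{11}$, the only term that contributes to residue $8$ modulo $11$ is the $a_8$ term $a_8 q^{19} X_1/(X_4 X_5)$, since among the listed exponents $\{0,1,2,3,4,5,7,19,9,10\}$ only $19 \equiv 8 \pmod{11}$. Reading off the entries from Theorem~\ref{theorem:crankdissection}, the relevant coefficients are $a_8=-6$ for $a=0$ and $a_8=5$ for $a=1$. Setting $Q = q^{11}$ and using that these have opposite signs, both halves of the conjectural inequality reduce to the single positivity claim
\begin{equation*}
[Q^n]\, Q\,(Q^{11};Q^{11})_\infty^2 \, \frac{j(Q;Q^{11})}{j(Q^4;Q^{11})\,j(Q^5;Q^{11})} \geq 0 \quad \text{for } n \geq 0,\ n \neq 2.
\end{equation*}
The case $n=0$ holds trivially since the quotient has no constant term in $Q$; in fact one verifies directly that $M(a,11,8) = 2 = p(8)/11$ for every $a \in \{0,1,\dots,10\}$.

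Second, applying the Jacobi triple product (three times) collapses the quotient to the pure Pochhammer expression
\begin{equation*}
Q \cdot \frac{(Q;Q^{11})_\infty\,(Q^{10};Q^{11})_\infty}{(Q^4;Q^{11})_\infty\,(Q^5;Q^{11})_\infty\,(Q^6;Q^{11})_\infty\,(Q^7;Q^{11})_\infty}.
\end{equation*}
I would then attempt to rewrite this as a non-negative linear combination of theta products with established positivity, using the Weierstrass three-term relations (\ref{identity:wr1})--(\ref{identity:wr10}) in the variable $Q$ together with the manipulations underlying Conjecture~\ref{conjecture:positivitycor}. This mirrors the successful strategy used for the other residue classes in Theorem~\ref{theorem:crankineq}.

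The hard part will be handling the numerator $(Q;Q^{11})_\infty (Q^{10};Q^{11})_\infty$: these Pochhammer symbols expand with genuinely signed Fourier coefficients, whereas the reciprocals $(Q^a;Q^{11})_\infty^{-1}$ appearing in the denominator expand with non-negative coefficients. The paper's positivity machinery is tailored to finite linear combinations of theta \emph{products} with carefully controlled signs, not to a quotient whose numerator introduces systematic cancellations, so the residue-$8$ term sits outside the regime covered by Section~\ref{subsection:positivitytechniques}. This structural asymmetry is the likely reason the statement remains a conjecture, and it is consistent with the isolated failure at $n=2$, which one would verify separately by explicit computation of $M(0,11,30)$, $M(1,11,30)$, and $p(30)/11$, together with a finite numerical check for small $n$ before asymptotic control of the theta quotient is available.
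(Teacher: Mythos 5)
This statement is a \emph{conjecture} in the paper -- no proof is given -- and the paper's own remark following the proof of Theorem~\ref{theorem:crankineq} performs exactly the reduction you describe: by Theorem~\ref{theorem:crankdissection} both halves of the inequality come down to the non-negativity (apart from the coefficient of $q^2$) of the Fourier coefficients of the single dissection element $\frac{J_{11}^2 qP_1}{P_4P_5} = q - q^2 + q^5 + \cdots$, which the paper explicitly leaves as an open problem, so your proposal takes the same route and rightly stops short of claiming a proof. The only slip is a bookkeeping one in the triple-product step: since $J_{11}^2 P_1/(P_4P_5)$ carries $J_{11}^{3}$ in the numerator against $J_{11}^{2}$ from the two theta functions in the denominator, the collapsed form should retain one factor $(Q^{11};Q^{11})_\infty$ multiplying your Pochhammer quotient.
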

\begin{lemma} \label{lemma:blockscrankpos} We have 
\begin{equation*}
\frac{J_{11}^2}{P_a} \geq 0
\end{equation*}
for $a \in \{1,2,3,4,5 \}$ and 
\begin{equation*}
\frac{J_{11}^2P_{2a}}{P_aP_{3a}} \geq 0
\end{equation*}
for $a \in \{1,2,3,4\}$.
\end{lemma}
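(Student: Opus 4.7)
The plan is to recognize each building block as a manifestly non-negative $q$-series via classical Rogers--Ramanujan/Andrews--Gordon-type identities, so that non-negativity of Fourier coefficients becomes an immediate consequence.

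First, for each $a \in \{1,2,3,4,5\}$, I would apply Jacobi's triple product to write $P_a = J_{a,11} = J_{11}(q^a;q^{11})_\infty (q^{11-a};q^{11})_\infty$. This simplifies
\begin{equation*}
\frac{J_{11}^2}{P_a} = \frac{J_{11}}{(q^a;q^{11})_\infty(q^{11-a};q^{11})_\infty},
\end{equation*}
which is precisely the product side of the Andrews--Gordon identity at modulus $11$ with parameter $a$ (the case $k=5$ of Andrews' generalization of Rogers--Ramanujan). The corresponding multi-sum side
\begin{equation*}
\sum_{n_1,\ldots,n_4 \geq 0} \frac{q^{N_1^2+N_2^2+N_3^2+N_4^2+N_a+N_{a+1}+\cdots+N_4}}{(q)_{n_1}(q)_{n_2}(q)_{n_3}(q)_{n_4}}, \qquad N_j := n_j+\cdots+n_4,
\end{equation*}
has patently non-negative Fourier coefficients, establishing the first assertion.

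For the four quotients in the second family, with $a \in \{1,2,3,4\}$, the same kind of simplification gives
\begin{equation*}
\frac{J_{11}^2 P_{2a}}{P_a P_{3a}} = \frac{J_{11}\,(q^{2a};q^{11})_\infty(q^{11-2a};q^{11})_\infty}{(q^a;q^{11})_\infty(q^{11-a};q^{11})_\infty(q^{3a};q^{11})_\infty(q^{11-3a};q^{11})_\infty}.
\end{equation*}
The plan here is to match each of these four eta quotients with an entry of Slater's catalog of Rogers--Ramanujan-type identities, or to apply a Bailey-chain iteration to the Andrews--Gordon base case so that the whole expression acquires a non-negative multi-sum representation. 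A more combinatorial route is to interpret the denominator as the generating function of partitions into parts in the four residue classes $\pm a,\pm 3a \pmod{11}$, and then absorb the signed numerator theta factor $(q^{2a};q^{11})_\infty(q^{11-2a};q^{11})_\infty$ by a Franklin-type sign-reversing involution.

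The main obstacle is uniformity in the second family: unlike the Andrews--Gordon identities, no single off-the-shelf identity simultaneously covers all four values of $a$. If a unified argument proves elusive, one can fall back to a case-by-case verification using Slater's list, or alternatively deduce positivity from the well-known positivity of $\sum_{n} M(a,11,n)q^n$ combined with the dissection formula of Theorem \ref{theorem:crankdissection}, exploiting linear independence of the relevant theta products (via the Weierstrass three-term relations used later in the paper) to isolate and bound each block individually.
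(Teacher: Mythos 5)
There is a genuine gap in your treatment of the first family: the identification of $J_{11}^2/P_a$ with the Andrews--Gordon product side is false. With $2k+1=11$ the Andrews--Gordon product is
\begin{equation*}
\prod_{\substack{n\geq 1\\ n\not\equiv 0,\pm a\ (\mathrm{mod}\ 11)}}\frac{1}{1-q^n}
=\frac{(q^a;q^{11})_\infty(q^{11-a};q^{11})_\infty(q^{11};q^{11})_\infty}{(q;q)_\infty}=\frac{P_a}{J_1},
\end{equation*}
whereas
\begin{equation*}
\frac{J_{11}^2}{P_a}=\frac{(q^{11};q^{11})_\infty}{(q^a;q^{11})_\infty(q^{11-a};q^{11})_\infty}
\end{equation*}
is a different function (they would coincide only if $(q;q)_\infty=[(q^a;q^{11})_\infty(q^{11-a};q^{11})_\infty]^2$). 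So the manifestly non-negative multi-sum you quote proves positivity of $P_a/J_1$, not of $J_{11}^2/P_a$, and the first assertion remains unproved. The paper's argument is elementary: multiply numerator and denominator by $(-q^a;q^{11})_\infty(-q^{11-a};q^{11})_\infty$ to get
\begin{equation*}
\frac{J_{11}^2}{P_a}=\frac{(-q^a;q^{11})_\infty(-q^{11-a};q^{11})_\infty(q^{11};q^{11})_\infty}{(q^{2a};q^{22})_\infty(q^{22-2a};q^{22})_\infty}
=\frac{1}{(q^{2a};q^{22})_\infty(q^{22-2a};q^{22})_\infty}\sum_{k\in\Z}q^{11\binom{k}{2}+ak},
\end{equation*}
the last step being Jacobi's triple product; both factors visibly have non-negative coefficients.

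For the second family your text is a list of possible strategies (Slater's tables, a Bailey chain, a sign-reversing involution) rather than an argument: no specific identity is exhibited for any of the four values of $a$, and, as you yourself note, no uniform off-the-shelf identity covers them. The paper's route is short: for $a\in\{1,2,3\}$ the quintuple product identity gives
\begin{equation*}
\frac{J_{11}^2P_{2a}}{P_aP_{3a}}=\frac{J_{33}^3}{J_{3a,33}J_{11-3a,33}}+\frac{q^aJ_{33}^3}{J_{3a,33}J_{22-3a,33}},
\end{equation*}
and each summand is non-negative by the same triple-product manipulation as above (now at modulus $33$); the remaining case $a=4$ is quoted from Berkovich--Garvan. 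Finally, your fallback of deducing block positivity from positivity of the crank counts together with Theorem \ref{theorem:crankdissection} cannot work as stated: the dissection only shows that certain \emph{signed} combinations of crank counts equal the blocks, e.g. $J_{11}^2/P_1=\sum_{n\geq 0}\bigl(M(0,11,11n)-M(1,11,11n)\bigr)q^n$, so positivity of the block is equivalent to Ekin's crank inequalities, which this paper derives \emph{from} the present lemma; within the paper that route is circular.
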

\begin{proof}[Proof of Lemma \ref{lemma:blockscrankpos}]
By the Jacobi's triple product identity we see that for $a \in \{1,2,3,4,5 \}$
\begin{gather*}
\frac{J_{11}^2}{P_a} = \frac{(q^{11};q^{11})_{\infty}}{(q^{a};q^{11})_{\infty}(q^{11-a};q^{11})_{\infty}} = \frac{(-q^{a};q^{11})_{\infty}(-q^{11-a};q^{11})_{\infty}(q^{11};q^{11})_{\infty}}{(q^{2a};q^{22})_{\infty}(q^{22-2a};q^{22})_{\infty}} \\ = \frac{1}{(q^{2a};q^{22})_{\infty}(q^{22-2a};q^{22})_{\infty}} \cdot \sum_{n=0}^{\infty} q^{11\binom{n}{2}+an}.
\end{gather*}
By the quintuple product identity \cite[(42)]{E} we see that for $a \in \{1,2,3\}$
\begin{equation} \label{identity:quinprod}
\frac{J_{11}^2P_{2a}}{P_aP_{3a}} = \frac{J_{33}^3}{J_{3a,33}J_{11-3a,33}} +  \frac{q^aJ_{33}^3}{J_{3a,33}J_{22-3a,33}}.
\end{equation}
In the same way as in the previous case but with change $q \rightarrow q^3$ we see that both of the terms in the right-hand side of the expression above have non-negative Fourier coefficients. The case of $a=4$ is proved in \cite[Corollary 4.8]{BG} using a different representation of $\frac{J_{11}^2P_3}{P_1P_4}$.
\end{proof}
\begin{proof}[Proof of Theorem \ref{theorem:crankineq}] To obtain Theorem \ref{theorem:crankineq} we use Lemma \ref{lemma:blockscrankpos} and refer to the positivity or negativity of the coefficients in Theorem \ref{theorem:crankdissection}. 
\end{proof}
\begin{remark} Conjecture \ref{conjecture:crankineq8} is partially solved. Using analytic methods it is known from \cite{R} that there is explicit $N$, such that for $n > N$ we have
\begin{equation*}
M(1,11,11n+8) \geq M(0,11,11n+8).
\end{equation*}

\end{remark}

\subsection{Positivity of theta quotients} \label{subsection:positivitytechniques} Denote 
\begin{equation*}
\Tilde{P}_a := (q^a;q^{11})_{\infty}(q^{11-a};q^{11})_{\infty} = \frac{P_a}{J_{11}}.  
\end{equation*}
\begin{definition} We define the notation $F(q) \geq G(q)$
if the Fourier coefficients of $F(q)-G(q)$ are non-negative.
\end{definition}
\begin{lemma} \label{lemma:posthetablock} We have
\begin{equation} \label{equation:thetaproduct}
J_{11}^{a} q^b \Tilde{P}_1^{\alpha_1}\Tilde{P}_2^{\alpha_2}\Tilde{P}_3^{\alpha_3}\Tilde{P}_4^{\alpha_4}\Tilde{P}_5^{\alpha_5} \geq 0   
\end{equation}
where $\alpha_i \leq 0$ for $1 \leq i \leq M$, $\alpha_1 + \alpha_2 + \alpha_3+\alpha_4+\alpha_5 = r$ and $0 \leq a \leq r$, $b \in \Z$.
\end{lemma}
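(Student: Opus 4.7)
The plan is to decompose the product in \eqref{equation:thetaproduct} into factors whose Fourier coefficients are manifestly non-negative, using Lemma \ref{lemma:blockscrankpos} together with the observation that
\begin{equation*}
\frac{1}{J_{11}} \;=\; \frac{1}{(q^{11};q^{11})_\infty} \;=\; \prod_{n \geq 1} \frac{1}{1-q^{11n}}
\end{equation*}
is the generating function for partitions into parts divisible by $11$, so every non-positive integer power of $J_{11}$ has non-negative Fourier coefficients, even though $J_{11}$ itself has alternating-sign coefficients by Euler's pentagonal number theorem.

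First I would set $\beta_i := -\alpha_i \geq 0$ and apply the identity $\Tilde{P}_i = P_i/J_{11}$ to rewrite
\begin{equation*}
\prod_{i=1}^{5} \Tilde{P}_i^{\alpha_i} \;=\; J_{11}^{\sum_i \beta_i} \prod_{i=1}^{5} P_i^{-\beta_i} \;=\; J_{11}^{-\sum_i \beta_i} \prod_{i=1}^{5} \left(\frac{J_{11}^2}{P_i}\right)^{\beta_i},
\end{equation*}
so that the left-hand side of \eqref{equation:thetaproduct} becomes
\begin{equation*}
J_{11}^a q^b \prod_{i=1}^{5} \Tilde{P}_i^{\alpha_i} \;=\; q^b \, J_{11}^{a - \sum_i \beta_i} \prod_{i=1}^{5} \left(\frac{J_{11}^2}{P_i}\right)^{\beta_i}.
\end{equation*}

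Reading the bound $a \leq r$ with $r = \sum_i \beta_i = -\sum_i \alpha_i$, the exponent $a - \sum_i \beta_i$ is non-positive, and by the opening remark $J_{11}^{a-\sum_i \beta_i}$ has non-negative Fourier coefficients. Each factor $J_{11}^2/P_i$ is non-negative by Lemma \ref{lemma:blockscrankpos}, and a product of power series with non-negative coefficients again has non-negative coefficients. Multiplication by $q^b$ only shifts exponents and preserves non-negativity of each individual Fourier coefficient, which would complete the proof.

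The main obstacle — and the reason the upper bound on $a$ is imposed — is precisely the handling of the residual power $J_{11}^{a - \sum_i \beta_i}$: a strictly positive exponent of $J_{11}$ would reintroduce alternating signs and wreck the argument, whereas a non-positive exponent bundles the leftover factor into a partition-style generating function and keeps the whole expression non-negative. The lower bound $a \geq 0$ plays no role in the positivity step itself and simply records the range in which the lemma will be applied in Section \ref{subsection:rankcrankinequal}.
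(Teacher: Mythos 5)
Your proposal is correct and takes essentially the same route as the paper: the paper splits the product into $a$ factors $J_{11}/\Tilde{P}_i = J_{11}^2/P_i$ (nonnegative by Lemma \ref{lemma:blockscrankpos}) and leftover factors $1/\Tilde{P}_i \geq 0$, while you equivalently convert every denominator into $(J_{11}^2/P_i)^{\beta_i}$ and absorb the deficit into a nonpositive power of $J_{11}$, whose nonnegativity is the same trivial reciprocal-product observation. Your reading of the bound $a \leq r$ with $r = -\sum_i \alpha_i$ (correcting the sign slip in the statement) matches the paper's intended meaning, so no gap remains.
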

\begin{proof}[Proof of Lemma \ref{lemma:posthetablock}] By Lemma \ref{lemma:blockscrankpos} we see that for $a \in \{1,2,3,4,5\}$
\begin{equation*}
\frac{J_{11}}{\Tilde{P}_a} \geq 0
\end{equation*}
and obviously 
\begin{equation*}
\frac{1}{\Tilde{P}_a} \geq 0.
\end{equation*}
Lemma \ref{lemma:posthetablock} follows directly from inequalities above. 
\end{proof}
\begin{remark} By Lemma \ref{lemma:posthetablock} all terms in sums of theta quotients in (\ref{expression:vterm}), Definition \ref{definition:mainbracket} and Definition \ref{definition:residuebrackets} have non-negative Fourier coefficients. For example, by applying (\ref{identity:rearrangingtermsP}) we have
\begin{equation*}
\Theta(1,0,0,0,0) = \frac{J_{11}^6}{J_1^2} \cdot \frac{q^2}{P_4P_5^2} = J_{11}^{14} \cdot  \frac{q^2}{P_1^2P_2^2P_3^2P_4^3P_5^4} = J_{11} \cdot  \frac{q^2}{\Tilde{P}_1^2\Tilde{P}_2^2\Tilde{P}_3^2\Tilde{P}_4^3\Tilde{P}_5^4} \geq 0.
\end{equation*}
    
\end{remark}
\begin{remark} Let $\alpha_1 < 0$ in (\ref{equation:thetaproduct}). Then we can establish not just non-negativity of Fourier coefficients of (\ref{equation:thetaproduct}) but also that the coefficients of (\ref{equation:thetaproduct}) are monotonically non-decreasing by multiplying the expression by $(1-q)$. In this sense we have that if
\begin{equation*}
P(q) := \sum_{i\in \Z} c_i q^i
\end{equation*}
and
\begin{equation*}
(1-q)P(q) = \sum_{i\in \Z} (c_i-c_{i-1})q^i \geq 0,
\end{equation*}
then we have $c_i \geq c_{i-1}$ for any $i\in \Z$.
\end{remark}

Let us compare Fourier coefficients of theta quotients in  $[c_1, c_2, c_3, c_4, c_5]$.
We have the following inequalities
\begin{proposition} \label{proposition:comp} We have
\begin{equation} \label{equation:comp}
[0, 1, 0, 0, 0] \leq [0, 0, 1, 0, 0]  \leq  [0, 0, 0, 1, 0]  \leq [0, 0, 0, 0, 1].  
\end{equation}
\end{proposition}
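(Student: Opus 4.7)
The plan is to verify each of the three sub-inequalities in \eqref{equation:comp} separately by rewriting the corresponding difference as a single positive theta monomial of the type governed by Lemma \ref{lemma:posthetablock}. From Definition \ref{definition:mainbracket}, after pulling out the common prefactor $J_{11}^2/J_1^3$, the three successive differences reduce to $q P_1 (P_4^2 - q P_1 P_3)$, $q(P_2^2 P_5 - P_1 P_4^2)$, and $q(P_2 P_3^2 - P_2^2 P_5)$, respectively. The proposition will follow once each of these three pure theta differences is shown to equal a manifestly positive monomial in $P_1,\ldots,P_5$ (times a power of $q$).

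The key observation is that each difference collapses under a single three-term Weierstrass identity from the list \eqref{identity:wr1}--\eqref{identity:wr5}. Dividing \eqref{identity:wr2} by $P_2 P_3$ produces $P_4^2 - q P_1 P_3 = P_1 P_4 P_5^2 /(P_2 P_3)$; dividing \eqref{identity:wr5} by $P_3$ produces $P_2^2 P_5 - P_1 P_4^2 = q P_1^2 P_2 P_5/P_3$; and dividing \eqref{identity:wr1} by $P_4 P_5$ and multiplying by $P_2$ produces $P_2 P_3^2 - P_2^2 P_5 = q^2 P_1^2 P_2^2 P_3/(P_4 P_5)$. Substituting these back and clearing $J_1^3$ by means of \eqref{identity:rearrangingterms}, written as $J_1^3 = (P_1 P_2 P_3 P_4 P_5)^3 / J_{11}^{12}$, each difference collapses to an expression of the shape $q^{b} J_{11}^{2} \widetilde{P}_1^{\alpha_1}\widetilde{P}_2^{\alpha_2}\widetilde{P}_3^{\alpha_3}\widetilde{P}_4^{\alpha_4}\widetilde{P}_5^{\alpha_5}$ with every $\alpha_i \leq 0$ and with the total denominator depth $|\alpha_1| + \cdots + |\alpha_5|$ comfortably larger than the exponent $2$ on $J_{11}$. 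Lemma \ref{lemma:posthetablock} then delivers non-negativity and closes the argument.

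I expect the main obstacle to be the first of these identifications: the raw combination $P_4^2 - q P_1 P_3$ does not appear verbatim in the catalog \eqref{identity:wr1}--\eqref{identity:wr10}, so one has to spot that \eqref{identity:wr2}, rewritten as $P_2 P_3 P_4^2 = P_1 P_4 P_5^2 + q P_1 P_2 P_3^2$, places exactly the two desired monomials on opposite sides once one divides by $P_2 P_3$. Once that pattern is recognized, the remaining two cases are analogous matches against \eqref{identity:wr5} and \eqref{identity:wr1}, and the verification that each output meets the hypotheses of Lemma \ref{lemma:posthetablock} is purely bookkeeping on exponents.
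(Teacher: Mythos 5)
Your proposal is correct and follows essentially the same route as the paper: each successive difference is collapsed via one of the Weierstrass relations (\ref{identity:wr2}), (\ref{identity:wr5}), (\ref{identity:wr1}) to the same three positive quotients $\frac{qP_1^2P_4P_5^2}{P_2P_3}$, $\frac{q^2P_1^2P_2P_5}{P_3}$, $\frac{q^3P_1^2P_2^2P_3}{P_4P_5}$ (up to the prefactor $J_{11}^2/J_1^3$), and positivity is then obtained exactly as in the paper by rewriting with (\ref{identity:rearrangingterms}) and invoking Lemma \ref{lemma:posthetablock}. The exponent bookkeeping you describe checks out, so no gaps.
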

\begin{proof}[Proof of Proposition \ref{proposition:comp}] Inequalities can be proved using the three-term Weierstrass relations (\ref{identity:wr1})-(\ref{identity:wr5}) in the following form
\begin{align*} 
[0, -1, 1, 0, 0] &= \frac{J_{11}^2}{J_1^3}q P_3^2P_2 - \frac{J_{11}^2}{J_1^3}q P_2^2P_5 = \frac{J_{11}^2}{J_1^3}\frac{q^3P_1^2P_2^2P_3}{P_4P_5} \geq 0,\\
[0, 0, -1, 1, 0] &= \frac{J_{11}^2}{J_1^3}q P_2^2P_5 - \frac{J_{11}^2}{J_1^3}q P_4^2 P_1 = \frac{J_{11}^2}{J_1^3}\frac{q^2P_1^2P_2P_5}{P_3} \geq 0,\\
[0, 0, 0, -1, 1] &= \frac{J_{11}^2}{J_1^3}q P_4^2 P_1 -\frac{J_{11}^2}{J_1^3}q^2 P_1^2 P_3 = \frac{J_{11}^2}{J_1^3}\frac{qP_1^2P_4P_5^2}{P_2P_3} \geq 0.
\end{align*}
Here we can see the positivity of the Fourier coefficients of the right-hand
side of equations above by Lemma \ref{lemma:posthetablock} as the theta quotients in the right-hand side can be transformed into (\ref{equation:thetaproduct}) by applying (\ref{identity:rearrangingtermsP}).
\end{proof} 
\begin{remark} \label{remark:strengthencomp} We can strengthen Proposition \ref{proposition:comp} to  
\begin{equation} \label{equation:strengthencomp}
J_1[0, 1, 0, 0, 0] \leq J_1[0, 0, 1, 0, 0]  \leq  J_1[0, 0, 0, 1, 0]  \leq J_1[0, 0, 0, 0, 1].  
\end{equation}
\end{remark}
We are able to compare Fourier coefficients of theta quotients in $[c_1, c_2, c_3, c_4, c_5; c_6]_{i}$ which was defined in Definition \ref{definition:residuebrackets}.
\begin{proposition}
\label{proposition:maincomparison} For every residue $i \neq 6$ we have
\begin{equation*}
 [0,1,0,0,0;0]_{i} \leq [0,0,1,0,0;0]_{i} \leq [0,0,0,1,0;0]_{i} \leq [0,0,0,0,1;0]_{i}.\\
\end{equation*}
\end{proposition}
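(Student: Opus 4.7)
The plan is to reduce each of the three inequalities asserted in Proposition \ref{proposition:maincomparison} to a direct application of Lemma \ref{lemma:posthetablock}. Inspecting Definition \ref{definition:residuebrackets}, every bracket $[c_1,c_2,c_3,c_4,c_5;c_6]_i$ with $i\neq 6$ has the shape $f_i\cdot[c_1,c_2,c_3,c_4,c_5]+c_6\,g_i$ for a single theta-ratio $f_i$ (and an auxiliary series $g_i$ which is irrelevant here since $c_6=0$). Consequently the three consecutive differences to be proved become
\begin{align*}
[0,0,1,0,0;0]_i-[0,1,0,0,0;0]_i &= f_i\cdot[0,-1,1,0,0],\\
[0,0,0,1,0;0]_i-[0,0,1,0,0;0]_i &= f_i\cdot[0,0,-1,1,0],\\
[0,0,0,0,1;0]_i-[0,0,0,1,0;0]_i &= f_i\cdot[0,0,0,-1,1].
\end{align*}

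The proof of Proposition \ref{proposition:comp} has already produced explicit single-monomial representations of the three base differences on the right, namely $\frac{J_{11}^2}{J_1^3}\frac{q^3 P_1^2 P_2^2 P_3}{P_4 P_5}$, $\frac{J_{11}^2}{J_1^3}\frac{q^2 P_1^2 P_2 P_5}{P_3}$, and $\frac{J_{11}^2}{J_1^3}\frac{q P_1^2 P_4 P_5^2}{P_2 P_3}$. I will multiply each by $f_i$ and then eliminate the factor $J_1^3$ via identity (\ref{identity:rearrangingterms}) $P_1P_2P_3P_4P_5=J_1J_{11}^4$, so that every one of the resulting products collapses to the canonical form $q^b\,J_{11}\,\tilde P_1^{-\beta_1}\tilde P_2^{-\beta_2}\tilde P_3^{-\beta_3}\tilde P_4^{-\beta_4}\tilde P_5^{-\beta_5}$ with integers $b\in\Z$ and $\beta_j\ge 0$. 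Lemma \ref{lemma:posthetablock} will then deliver non-negativity of the Fourier coefficients.

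The only bookkeeping needed is to verify that no $\beta_j$ becomes negative. For the five residues $i\in\{0,4,7,9,10\}$ the series $f_i$ has no $P$-factor in the numerator, so this is automatic: multiplication by $f_i$ just increases one denominator exponent by one and drops the $J_{11}$-power by one. For the five residues $i\in\{1,2,3,5,8\}$ where $f_i$ does carry a numerator factor $P_j$ (and, for $i=8$, an extra $q$), one checks by reading off the denominator-exponent profiles $(1,1,2,4,4)$, $(1,2,4,3,2)$, $(1,4,4,2,1)$ of the three base differences that the numerator index of $f_i$ always lies among indices with strictly positive exponent, so the cancellation is legitimate and $\beta_j\ge 0$ is preserved. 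In every case the final exponent tally is $J_{11}^1$ against a total denominator exponent $\sum_j\beta_j=13$, so the hypothesis $0\le 1\le 13$ of Lemma \ref{lemma:posthetablock} is comfortably met.

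The main obstacle is purely organisational: the thirty individual case-verifications (three differences times ten residues) must be carried out without arithmetic slips. No conceptual difficulty is expected, since Proposition \ref{proposition:comp} has isolated the three basic positive theta products and Lemma \ref{lemma:posthetablock} has abstracted the positivity criterion; the remaining work is a uniform monomial calculation keyed to the ten choices of prefactor $f_i$.
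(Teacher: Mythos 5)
Your proposal is correct, and it rests on the same two ingredients as the paper (the explicit monomial forms of $[0,-1,1,0,0]$, $[0,0,-1,1,0]$, $[0,0,0,-1,1]$ from the proof of Proposition \ref{proposition:comp}, together with Lemma \ref{lemma:posthetablock}), but it is organised differently in a way that matters at exactly one point. The paper's proof multiplies the chain of inequalities \eqref{equation:comp} by the prefactor $f_i$ itself and therefore needs $f_i\geq 0$ coefficientwise; this is fine for $i\in\{0,1,2,3,4,5,7,9,10\}$, but it fails for $i=8$, since $\frac{J_{11}^2qP_1}{P_4P_5}=q-q^2+q^5+\cdots$ shows the relevant prefactor is not non-negative, and the paper must resort to the strengthened chain of Remark \ref{remark:strengthencomp} and the factorisation $\frac{qP_1}{J_1P_4P_5}\geq 0$. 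You instead absorb $f_i$ into the difference and check that the full product, after clearing $J_1^3$ via \eqref{identity:rearrangingterms}, is a single monomial $q^bJ_{11}\,\tilde P_1^{-\beta_1}\cdots\tilde P_5^{-\beta_5}$ with all $\beta_j\geq 0$ (your profiles $(1,1,2,4,4)$, $(1,2,4,3,2)$, $(1,4,4,2,1)$ are right, every entry is $\geq 1$, so the numerator factor of $f_i$ always cancels, and the tally $J_{11}^1$ against $\sum\beta_j=13$ satisfies the hypothesis of Lemma \ref{lemma:posthetablock}). This buys you a uniform treatment of all ten residues, with no special case and no appeal to Remark \ref{remark:strengthencomp}; the price is the thirty routine exponent verifications, which you correctly identify as the only remaining work and which do check out.
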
 
\begin{proof}[Proof of Proposition \ref{proposition:maincomparison}] For $i \neq 8$ this result can directly obtained from Proposition \ref{proposition:comp}. For example for residue $0$ we need to multiply (\ref{equation:comp}) by $\frac{1}{P_1} \geq 0$.  For $i = 8$ we use Remark \ref{remark:strengthencomp} and multiply (\ref{equation:strengthencomp}) by $\frac{qP_1}{J_1P_4P_5} \geq 0$.
\end{proof}
For residue $6$ we have the following comparison formulas.
\begin{proposition}
\label{proposition:maincomparisonres6} We have
\begin{equation*}
 \Theta(0,0,0,0,1) \leq \Theta(0,0,0,1,0) \leq \Theta(0,0,1,0,0) \leq \Theta(0,1,0,0,0). 
\end{equation*}
\end{proposition}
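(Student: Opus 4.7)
The plan is to mirror, for residue 6, the approach used in the proof of Proposition \ref{proposition:comp}: for each of the three adjacent pairs, compute the difference $\Theta(\ldots) - \Theta(\ldots)$, combine into a single fraction, apply one of the three-term Weierstrass relations (\ref{identity:wr1})--(\ref{identity:wr5}) to collapse the numerator into a single monomial in the $P_i$, and then use identity (\ref{identity:rearrangingterms}), $P_1 P_2 P_3 P_4 P_5 = J_1 J_{11}^4$, to eliminate $J_1$'s before invoking Lemma \ref{lemma:posthetablock}.

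Concretely, the numerator of $\Theta(0,0,0,1,0) - \Theta(0,0,0,0,1)$ carries the combination $P_3^2 - P_2 P_5$, which by (\ref{identity:wr1}) equals $q^2 P_1^2 P_2 P_3/(P_4 P_5)$. The numerator of $\Theta(0,0,1,0,0) - \Theta(0,0,0,1,0)$ carries $P_2^2 P_5 - P_1 P_4^2$, which by (\ref{identity:wr5}) equals $q P_1^2 P_2 P_5/P_3$. For $\Theta(0,1,0,0,0) - \Theta(0,0,1,0,0)$ the relevant combination is $P_4^2 - q P_1 P_3$; this is not one of the two-term remainders appearing directly in (\ref{identity:wr1})--(\ref{identity:wr5}), but rearranging (\ref{identity:wr2}) as $P_2 P_3 P_4^2 = P_1 P_4 P_5^2 + q P_1 P_2 P_3^2$ and dividing by $P_2 P_3$ yields $P_4^2 - q P_1 P_3 = P_1 P_4 P_5^2/(P_2 P_3)$.

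After substitution and one application of (\ref{identity:rearrangingterms}) to clear the $1/J_1^2$ factors, each of the three differences reduces to an expression of the form $q^b J_{11}/(\Tilde{P}_1^{\alpha_1}\Tilde{P}_2^{\alpha_2}\Tilde{P}_3^{\alpha_3}\Tilde{P}_4^{\alpha_4}\Tilde{P}_5^{\alpha_5})$ with $\alpha_i \geq 0$ and $\sum \alpha_i \geq 1$, which is non-negative by Lemma \ref{lemma:posthetablock}; equivalently, one may factor off one copy of $J_{11}/\Tilde{P}_i \geq 0$ together with $1/\Tilde{P}_j \geq 0$ factors from Lemma \ref{lemma:blockscrankpos}.

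The main obstacle is the third inequality: the identity needed to handle $P_4^2 - q P_1 P_3$ is less obvious than those used for the first two differences, since it requires using (\ref{identity:wr2}) in a slightly indirect way rather than reading off a two-term remainder directly. Once that identity is pinned down, the remaining manipulations are routine bookkeeping with (\ref{identity:rearrangingterms}) and the definition of $\Tilde{P}_i$.
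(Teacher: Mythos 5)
Your proposal is correct and follows essentially the same route as the paper: each adjacent difference is collapsed via a Weierstrass relation — (\ref{identity:wr1}) for $\Theta(0,0,0,1,-1)$, (\ref{identity:wr5}) for $\Theta(0,0,1,-1,0)$, and (\ref{identity:wr2}) for $\Theta(0,1,-1,0,0)$ — into a single theta quotient ($\tfrac{J_{11}^2}{J_1^3}\tfrac{q^3P_1^3}{P_5}$, $\tfrac{J_{11}^2}{J_1^3}\tfrac{q^2P_1^2P_5}{P_4}$, $\tfrac{J_{11}^2}{J_1^3}\tfrac{P_5^3}{P_3}$ in the paper's normalization), whose non-negativity follows from (\ref{identity:rearrangingterms}) together with Lemma \ref{lemma:posthetablock}, exactly as you describe. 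The only cosmetic difference is that the paper works with the rewritten form $\Theta = \tfrac{J_{11}^2}{J_1^3}[\cdots]$ rather than the original $\tfrac{J_{11}^6}{J_1^2}[\cdots]$, which changes nothing of substance.
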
 
\begin{proof}[Proof of Proposition \ref{proposition:maincomparisonres6}]
Inequalities can be proved using the three-term Weierstrass relations (\ref{identity:wr1})-(\ref{identity:wr10}) in the following form.
\begin{align*} 
 \Theta(0,0,0,1,-1) &= \frac{J_{11}^2}{J_1^3}\frac{qP_1P_3P_4}{P_2} - \frac{J_{11}^2}{J_1^3}\frac{qP_1P_4P_5}{P_3} = \frac{J_{11}^2}{J_1^3}\frac{q^3P_1^3}{P_5} \geq 0,\\
\Theta(0,0,1,-1,0) &= \frac{J_{11}^2}{J_1^3} \frac{qP_2P_3P_5}{P_4}-\frac{J_{11}^2}{J_1^3}\frac{qP_1P_3P_4}{P_2} = \frac{J_{11}^2}{J_1^3} \frac{q^2P_1^2P_5}{P_4} \geq 0,\\
\Theta(0,1,-1,0,0) &=\frac{J_{11}^2}{J_1^3}  \frac{P_2P_4P_5}{P_1}-\frac{J_{11}^2}{J_1^3} \frac{qP_2P_3P_5}{P_4} = \frac{J_{11}^2}{J_1^3} \frac{P_5^3}{P_3}\geq 0.\qedhere
\end{align*}
\end{proof}
We also provide some additional comparison formulas among theta quotients. 
\begin{lemma} \label{lemma:addcomparison} We have
\begin{align*}
[0,0,0,0,-1;1]_{1} &\geq 0,\\
[0,1,0,0,0;-1]_{2} &\geq 0,\\
[0,0,0,1,0;-1]_{3} &\geq 0,\\
[0,0,-1,0,0;1]_{3} &\geq0 ,\\
[0,0,1,0,0;-1]_{5} &\geq 0,\\
[0,0,0,-1,0;1]_{8} &\geq 0.
\end{align*}
\end{lemma}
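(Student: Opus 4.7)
The plan is to handle all six inequalities by the same three-step recipe: first, unpack the bracket from Definition~\ref{definition:residuebrackets} to write the left-hand side as $\tfrac{J_{11}^2}{J_1^3}$ times a difference of two monomials in the $P_i$; second, identify the three-term Weierstrass relation among \eqref{identity:wr1}--\eqref{identity:wr10} whose ``missing'' term exactly matches that difference and use it to collapse the two monomials into a single theta quotient; third, apply identity \eqref{identity:rearrangingterms} to eliminate $J_1^3$ in favor of $\tilde{P}_i$'s and quote Lemma~\ref{lemma:posthetablock}. In every case the final expression will take the uniform shape $\frac{q^c J_{11}}{\tilde P_1^{n_1}\tilde P_2^{n_2}\tilde P_3^{n_3}\tilde P_4^{n_4}\tilde P_5^{n_5}}$ with $\sum n_i = 13$, hence $a = 1 \leq 13$ satisfies the hypothesis of Lemma~\ref{lemma:posthetablock}.

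As a worked example, consider $[0,1,0,0,0;-1]_2$. Unpacking gives
\[
[0,1,0,0,0;-1]_2 = \frac{J_{11}^2}{J_1^3}\!\left(\frac{q^2 P_1 P_3^2}{P_4} - \frac{q^3 P_1^2 P_2}{P_5}\right) = \frac{q^2 J_{11}^2 P_1}{J_1^3\,P_4 P_5}\bigl(P_3^2 P_5 - qP_1 P_2 P_4\bigr).
\]
Dividing \eqref{identity:wr4} by $P_2$ gives $P_3^2 P_5 - q P_1 P_2 P_4 = \tfrac{P_1 P_4^2 P_5}{P_2}$, so the bracket reduces to $\tfrac{q^2 J_{11}^2 P_1^2 P_4}{J_1^3 P_2} = \tfrac{q^2 J_{11}}{\tilde P_1 \tilde P_2^4 \tilde P_3^3 \tilde P_4^2 \tilde P_5^3}$, non-negative by Lemma~\ref{lemma:posthetablock}.

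The remaining five cases are completely analogous, with the Weierstrass relation chosen as follows: \eqref{identity:wr3} for $[0,0,0,0,-1;1]_1$ (yielding $\tfrac{q^2 J_{11}}{\tilde P_1^2 \tilde P_2^3 \tilde P_3^2 \tilde P_4^2 \tilde P_5^4}$); \eqref{identity:wr10} for $[0,0,0,1,0;-1]_3$ (yielding $\tfrac{q^2 J_{11}}{\tilde P_1 \tilde P_2^3 \tilde P_3^4 \tilde P_4^4 \tilde P_5}$); \eqref{identity:wr8} for $[0,0,-1,0,0;1]_3$ (yielding $\tfrac{q^3 J_{11}}{\tilde P_2^3 \tilde P_3^4 \tilde P_4^3 \tilde P_5^3}$); \eqref{identity:wr7} for $[0,0,1,0,0;-1]_5$ (yielding $\tfrac{q^3 J_{11}}{\tilde P_1 \tilde P_2 \tilde P_3^4 \tilde P_4^3 \tilde P_5^4}$); and again \eqref{identity:wr7} for $[0,0,0,-1,0;1]_8$ (yielding $\tfrac{J_{11}}{\tilde P_1^3 \tilde P_2^3 \tilde P_3^3 \tilde P_4^4}$).

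The only nontrivial step is the pattern-matching at the start of each case: one must inspect the two surviving summands in the unpacked bracket and locate the unique relation among \eqref{identity:wr1}--\eqref{identity:wr10} whose three terms contain both monomials (after a common factor has been cleared), so that their difference is precisely the third term of the relation. This turns the difference into a single monomial, and the rest of the argument is purely mechanical. Because all ten Weierstrass relations are available and the bracket structure is small, this search succeeds quickly in each of the six cases; no additional identities or generating function techniques are needed.
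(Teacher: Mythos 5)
Your proposal is correct and follows essentially the same route as the paper: unpack the bracket from Definition \ref{definition:residuebrackets}, collapse the resulting two-term difference via the appropriate three-term Weierstrass relation among (\ref{identity:wr1})--(\ref{identity:wr10}), and conclude positivity through (\ref{identity:rearrangingterms}) and Lemma \ref{lemma:posthetablock}. Your worked case $[0,1,0,0,0;-1]_2$ and the relation choices and final quotients for the other five cases all agree with the paper's reductions (e.g.\ $\frac{J_{11}^2}{J_1^3}\frac{q^2P_1^2P_4}{P_2}$, $\frac{J_{11}^2}{J_1^3}\frac{q^3P_1^3}{P_3}$, $\frac{J_{11}^2}{J_1^3}\frac{P_5^3}{P_4}$), with your explicit passage to the $\Tilde{P}_i$ form being a slightly more detailed justification of the final positivity step.
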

\begin{proof}[Proof of Lemma \ref{lemma:addcomparison}] We need to use the three-term Weierstrass relations (\ref{identity:wr1})-(\ref{identity:wr10})
\begin{align*}
[0,0,0,0,-1;1]_{1} &= \frac{J_{11}^2}{J_1^3} \left(\frac{qP_2^2P_4}{P_1} - \frac{P_5}{P_2P_3}\cdot qP_3^2P_2 \right)= \frac{J_{11}^2}{J_1^3}\frac{q^2P_1P_3P_4}{P_5} \geq 0,\\
[0,1,0,0,0;-1]_{2} &= \frac{J_{11}^2}{J_1^3} \left(\frac{P_3}{P_1P_4} \cdot q^2P_1^2P_3 - \frac{q^3P_1^2P_2}{P_5} \right)=\frac{J_{11}^2}{J_1^3} \frac{q^2P_1^2P_4}{P_2} \geq 0,\\
[0,0,0,1,0;-1]_{3} &= \frac{J_{11}^2}{J_1^3} \left(\frac{P_2}{P_1P_3} \cdot qP_2^2P_5 -   \frac{qP_3^2P_5}{P_4} \right)=\frac{J_{11}^2}{J_1^3}  \frac{q^2P_1^2P_5^2}{P_3P_4} \geq 0,\\
[0,0,-1,0,0;1]_{3} &= \frac{J_{11}^2}{J_1^3} \left(\frac{qP_3^2P_5}{P_4} -  \frac{P_2}{P_1P_3} \cdot qP_4^2P_1 \right)  =\frac{J_{11}^2}{J_1^3} \frac{q^3P_1^3}{P_3} \geq 0,\\
[0,0,1,0,0;-1]_{5} &= \frac{J_{11}^2}{J_1^3} \left(\frac{P_4}{P_2P_5} \cdot qP_4^2P_1 - \frac{qP_1P_5^2}{P_3} \right)=\frac{J_{11}^2}{J_1^3}\frac{q^3P_1^2P_2^2}{P_3P_5} \geq 0,\\
[0,0,0,-1,0;1]_{8} &= \frac{J_{11}^2}{J_1^3} \left(\frac{P_3P_4^2}{P_2} - \frac{qP_1}{P_4P_5} \cdot qP_2^2P_5 \right)=\frac{J_{11}^2}{J_1^3} \frac{P_5^3}{P_4} \geq 0.\qedhere
\end{align*}
\end{proof}

\subsection{Proofs of rank-crank inequalities} \label{subsection:rankcrankinequal}
Define $Q^{C}_{a,m}(q)$ to be the elements of the $11$-dissection of the deviation of the crank:
\begin{equation*}
D_{C}(a,11) =: \sum_{m=0}^{10}  Q^{C}_{a,m}(q^{11})q^m.
\end{equation*} 
So the reformulation of this definition is
\begin{equation} \label{definition:crankdisselem}
Q^{C}_{a,m}(q) := \sum^{\infty}_{n = 0} \left(M(a,11,11n+m) - \frac{p(11n+m)}{11}\right)q^n.
\end{equation}
\begin{proof}[Proof of Theorem \ref{theorem:rankcrankbasicineq}] As an example we consider inequalities corresponding to residue $0$. For $n \geq 0$, $N_i=N(0,11,11n)$ and $M_i=M(i,11,11n)$ we want to establish
\begin{align*}
N_0 + 2N_1 + M_1 &\geq 2N_2+N_4+M_0,\\
N_0 +2N_1+3N_2 + M_1 &\geq 3N_3 + 3N_5+M_0,\\
2N_2+N_3 + N_5&\geq 4N_4,\\
N_2+5N_3+3N_4+M_0 &\geq N_0+2N_1+6N_5+M_1.
\end{align*}
We sum up
\begin{align*} 
Q_{0,0}(q)+2Q_{1,0}(q)+Q^{C}_{1,0}(q)-2Q_{2,0}(q)-Q_{4,0}(q)-Q^{C}_{0,0}(q) &= [0,11,0,0,0;0]_{0} \geq 0,\\
Q_{0,0}(q)+2Q_{1,0}(q)+3Q_{2,0}(q)+Q^{C}_{1,0}(q)-3Q_{3,0}(q)-3Q_{5,0}(q)-Q^{C}_{0,0}(q) &= [0,-11,11,0,0;0]_{0} \geq 0,\\
2Q_{2,0}(q)+Q_{3,0}(q)+Q_{5,0}(q)-4Q_{4,0}(q) &= [0,0,-11,11,0;0]_{0} \geq 0
\end{align*}
and
\begin{multline*} 
Q_{2,0}(q)+5Q_{3,0}(q)+3Q_{4,0}(q)+Q^{C}_{0,0}(q)-Q_{0,0}(q)-2Q_{1,0}(q)-6Q_{5,0}(q)-Q^{C}_{1,0}(q) \\= [0,0,0,-11,11;0]_{0} \geq 0.
\end{multline*}
and apply Proposition \ref{proposition:maincomparison}. For other residues Lemma \ref{lemma:addcomparison} is also used.
\end{proof}

\begin{proof}[Proof of Corollary \ref{corollary:inequallevel2level4}] 
As an example let us consider the inequality
\begin{equation*} 
N(2,11,11n+1) \geq M(2,11,11n+1),
\end{equation*}
where $n \geq 0$. To prove it we sum up
\begin{equation*} 
Q_{2,1}(q) - Q^{C}_{2,1}(q) = [0,1,0,-1,0;1]_{1} \geq 0
\end{equation*}
and apply Proposition \ref{proposition:maincomparison} and Lemma \ref{lemma:addcomparison}.
\end{proof}

\begin{proof}[Proof of Corollary \ref{corollary:inequallevel6}] 
As an example let us consider the inequality
\begin{equation*} 
N(1,11,11n+1)+2 N(4,11,11n+1) \geq 2 N(5,11,11n+1)+M(0,11,11n+1),
\end{equation*}
where $n \geq 0$. To prove it we sum up
\begin{multline*} 
Q_{1,1}(q) + 2Q_{4,1}(q) - Q_{5,1}(q) - Q^{C}_{0,1}(q) = [1, -2, -4, 3, -3; 2]_{1}= \\= [1, -1, -1, -1, -1, 0]_{1} + [0, -1, -3, 4, -2; 2]_{1} \geq 0
\end{multline*}
and apply Proposition \ref{proposition:maincomparison}, Lemma \ref{lemma:addcomparison} and note that 
\begin{equation*} 
[1, -1, -1, -1, -1; 0]_{1}=\frac{J_{11}^2P_5}{P_2P_3} \geq 0
\end{equation*}
by Lemma \ref{lemma:blockscrankpos}.
\end{proof}

\begin{proof}[Proof of Corollary \ref{corollary:ineqrank6level2}] We consider
\begin{align*}
Q_{0,6}(q) &= \Theta(0,0,1,1,-1) \geq 0,\\
-Q_{5,6}(q) &= \Theta(1,0,1,0,1) \geq 0,
\end{align*}
where $n \geq 0$ and apply Proposition \ref{proposition:maincomparisonres6}.
\end{proof}

\begin{proof}[Proof of Corollary \ref{corollary:ineqrank6level4level6}]
As an example let us consider the inequality
\begin{equation*} 
N(0,11,11n+6)+N(3,11,11n+6)  \geq N(1,11,11n+6)+N(4,11,11n+6),
\end{equation*}
where $n \geq 0$. To prove it we sum up
\begin{equation*} 
Q_{0,6}(q) + Q_{3,6}(q) - Q_{1,6}(q) - Q_{4,6}(q)= \Theta(2, 0, 3, 3, -6) \geq 0
\end{equation*}
and apply Proposition \ref{proposition:maincomparisonres6}.
\end{proof}
\begin{remark} \label{remark: constructineq} If it is known that
\begin{equation*} 
[c_1, c_2, c_3, c_4, c_5; c_6]_{m} \geq 0
\end{equation*} 
with some $c_k \in \Q$, $1\leq k\leq 6$ and residue $m \neq 6$, then you can construct some rank-crank inequality with $N_i = N(i,11,11n+m)$ and $M_i = M(i,11,11n+m)$. We need to look at $\{ Q_{a,m} \ | \ 0\leq a\leq 4 \}$ and $Q^{C}_{0,m}$ as linear independent vectors in $\R^6$ as described in Section \ref{section:devrank}, that is, we need to consider equation
\begin{equation*} 
\sum_{j=0}^{4} a_jQ_{j,m} + b_0Q^{C}_{0,m} = [c_1, c_2, c_3, c_4, c_5; c_6]_{m}
\end{equation*} 
as a linear system over unknowns $\{a_0,a_1,a_2,a_3,a_4, b_0\}$ with a unique rational solution. The same is true in case of residue $6$. If it is known that
\begin{equation*} 
\Theta(c_1, c_2, c_3, c_4, c_5) \geq 0
\end{equation*} 
with some $c_k \in \Q$, $1\leq k\leq 5$, then you can construct some rank inequality with $N_i = N(i,11,11n+6)$ and $M_i = M(i,11,11n+6)$. We need to look at $\{ \Theta_{a,6} \ | \ 0\leq a\leq 4 \}$ as linear independent vectors in $\R^5$ as described in Theorem \ref{theorem:rankdissection}, that is, we need to consider equation
\begin{equation*} 
\sum_{j=0}^{4} a_j\Theta_{j,6} = \Theta(c_1, c_2, c_3, c_4, c_5)
\end{equation*} 
as a linear system over unknowns $\{a_0,a_1,a_2,a_3,a_4\}$ with a unique rational solution. Then we might need to remove a partition function term by applying
\begin{equation*} 
p(n) = N(0,11,n) + 2N(1,11,n) + \cdots + 2N(5,11,n).
\end{equation*} 
\end{remark}

\subsection{Conjectural rank-crank inequalities} \label{subsection:conjinequal}
In the sense of rank-crank inequalities in Corollary \ref{corollary:inequallevel2level4} and Corollary \ref{corollary:ineqrank6level2} we can conjecture the following stronger two-term rank-crank inequalities.
\begin{conjecture} \label{conjecture:rankcrankconj} For $N_i = N(i,11,11n)$ and $M_i = M(i,11,11n)$ we have 
\begin{equation*}
N_0 \geq_{3} N_1 \geq N_2 \geq_{1} M_0 \geq \frac{p(11n)}{11} \geq M_1 \geq N_3 \geq_{2} N_4 \geq N_5.
\end{equation*}
For $N_i = N(i,11,11n+1)$ and $M_i = M(i,11,11n+1)$ we have 
\begin{equation*}
N_0 \geq N_1 \geq N_2 \geq_{1} M_1 \geq \frac{p(11n+1)}{11} \geq M_2 \geq M_0 \geq_{1} N_3 \geq N_4 \geq N_5.
\end{equation*}
For $N_i = N(i,11,11n+2)$ and $M_i = M(i,11,11n+2)$ we have 
\begin{equation*}
N_0 \geq_{3} N_1 \geq N_2 \geq_{1} M_2 \geq \frac{p(11n+2)}{11} \geq M_0 \geq N_3 \geq N_4 \geq N_5.
\end{equation*}
For $N_i = N(i,11,11n+3)$ and $M_i = M(i,11,11n+3)$ we have 
\begin{equation*}
N_0 \geq_{2} N_1 \geq_{1} N_2 \geq M_0 \geq \frac{p(11n+3)}{11} \geq M_1 \geq N_3 \geq N_4 \geq N_5.
\end{equation*}
For $N_i = N(i,11,11n+4)$ and $M_i = M(i,11,11n+4)$ we have 
\begin{equation*}
N_0 \geq_{3} N_1 \geq N_2 \geq_{1} M_0 \geq \frac{p(11n+4)}{11} \geq M_1 \geq_{1} N_3 \geq N_4 \geq N_5.
\end{equation*}
For $N_i = N(i,11,11n+5)$ and $M_i = M(i,11,11n+5)$ we have 
\begin{equation*}
N_0 \geq N_1 \geq N_2 \geq M_0 \geq \frac{p(11n+5)}{11} \geq M_2 \geq N_3 \geq_{1} N_4 \geq N_5.
\end{equation*}
For $N_i = N(i,11,11n+6)$ and $M_i = M(i,11,11n+6)$ we have 
\begin{equation*}
N_0 \geq_{1} N_1 \geq N_2 \geq  \frac{p(11n+6)}{11} \geq N_3 \geq N_4 \geq_{1} N_5.
\end{equation*}
For $N_i = N(i,11,11n+7)$ and $M_i = M(i,11,11n+7)$ we have 
\begin{equation*}
N_0 \geq N_1 \geq_{1} N_2 \geq M_1 \geq \frac{p(11n+7)}{11} \geq M_0 \geq N_3 \geq N_4 \geq N_5.
\end{equation*}
For $N_i = N(i,11,11n+8)$ and $M_i = M(i,11,11n+8)$ we have 
\begin{equation*}
N_0 \geq_{3} N_1 \geq N_2 \geq M_1 \geq_{3} \frac{p(11n+8)}{11} \geq_{3} M_0 \geq N_3 \geq N_4 \geq N_5.
\end{equation*}
For $N_i = N(i,11,11n+9)$ and $M_i = M(i,11,11n+9)$ we have 
\begin{equation*}
N_0 \geq_{2} N_1 \geq N_2 \geq M_1 \geq \frac{p(11n+9)}{11} \geq M_0 \geq_{1} N_3 \geq N_4 \geq N_5.
\end{equation*}
For $N_i = N(i,11,11n+10)$ and $M_i = M(i,11,11n+10)$ we have 
\begin{equation*}
N_0 \geq_{3} N_1 \geq N_2 \geq M_0 \geq \frac{p(11n+10)}{11} \geq M_3 \geq_{1} N_3 \geq N_4 \geq N_5.
\end{equation*}
Notation $A_n \geq B_n$ means that $A_n \geq B_n$ for all $n \geq 0$ and $A_n \geq_m B_n$ means that $A_n \geq B_n$ for all $n \geq m$.
\end{conjecture}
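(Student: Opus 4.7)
The plan is to reduce every two-term inequality in each residue chain to the non-negativity (from index $\geq k$, where $k$ is the subscript appearing in $\geq_k$) of a single bracket $[c_1,c_2,c_3,c_4,c_5;c_6]_m$ for $m\neq 6$, or of a single $\Theta(c_1,\ldots,c_5)$ for $m=6$. For each consecutive pair $A_n\geq B_n$ in a chain, I would form the corresponding linear combination of the dissection elements $Q_{a,m}$, $Q^{C}_{a,m}$ and, when relevant, $Q^{\text{mck}}_{a,m}$ from Section \ref{section:devrank}. The two rank dissection elements at a fixed residue that share a mock part do so with opposite signs, so in each link of the chain the mock contribution either vanishes identically or can be eliminated by adding a suitable multiple of the partition-function identity $p(n)=N_0+2(N_1+\cdots+N_5)$. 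The mechanical translation both ways is legitimised by Remark \ref{remark: constructineq}, so the conjecture is equivalent to a finite list of bracket-positivity statements.

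Next I would try to exhibit each such bracket as a non-negative combination of the elementary positive building blocks of Lemma \ref{lemma:posthetablock} (the expressions $J_{11}^a q^b \Tilde P_1^{\alpha_1}\cdots\Tilde P_5^{\alpha_5}$ with $\sum\alpha_i=-a\leq 0$), modulo a finite initial segment of Fourier coefficients. Propositions \ref{proposition:comp}, \ref{proposition:maincomparison}, \ref{proposition:maincomparisonres6} and Lemma \ref{lemma:addcomparison} already supply a first layer of such decompositions; I would extend them by running the Weierstrass identities (\ref{identity:wr1})--(\ref{identity:wr10}) in all remaining patterns to obtain the required signed combinations. For the threshold statements $\geq_k$, the natural device is the monotonicity trick stated after Lemma \ref{lemma:posthetablock}: if $(1-q)^k F(q)$ has non-negative coefficients and the first $k$ coefficients of $F$ are themselves non-negative, then $F\geq 0$ as a series from index $k$ onwards. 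The low-order check is a finite computation against the explicit Fourier expansions of the relevant $Q_{a,m}$.

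For the residue-$6$ chain, which contains only rank terms and no mock part, the same program runs inside the five-dimensional $\Theta$-space: I would look for refinements of Proposition \ref{proposition:maincomparisonres6} producing each required $\Theta(c_1,\ldots,c_5)$ as a manifestly positive theta product after applying (\ref{identity:rearrangingterms}). For the mock-containing residues $m\in\{0,4,7,9,10\}$, the inequalities not crossing a mock pair are purely theta and fall under the preceding machinery, but for those that mix two rank classes straddling a $g$-term, the mock parts cancel by design, and the residual bracket again lives in the theta world.

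The main obstacle, emphasised in Remark 2.3, is precisely that these conjectural two-term inequalities are \emph{not} corollaries of Theorem \ref{theorem:rankcrankbasicineq} nor of each other, so the positivity of the associated brackets cannot be extracted from the already proved inequalities by mere linear combination. Many of the target brackets have mixed-sign coefficient vectors $(c_1,\ldots,c_5)$, for which the coarse Proposition \ref{proposition:maincomparison} is useless, and one is forced to find delicate identities turning the specific combination into a sum of Lemma \ref{lemma:posthetablock}-type terms; there is no uniform recipe, and each of the roughly eighty links may need its own Weierstrass juggling. A second source of difficulty is the sharpness of the thresholds $\geq_k$ for $k\in\{1,2,3\}$: the true initial defect in the Fourier coefficients rules out domination by a manifestly non-negative series, so one genuinely has to separate a finite head, verify it numerically, and show positivity only of the tail via the $(1-q)^k$-monotonicity trick. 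Together these two points, rather than any single deep obstruction, are what keep the statement conjectural in the present paper.
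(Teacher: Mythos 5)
What you have written is a programme, not a proof, and the statement you were asked about is precisely the part of the paper that has no proof: it is stated as Conjecture \ref{conjecture:rankcrankconj}, with only a handful of its links established elsewhere (those that follow from Corollary \ref{corollary:inequallevel2level4}). Your reduction of each two-term link to the eventual non-negativity of a single bracket $[c_1,\ldots,c_5;c_6]_m$ or a single $\Theta(c_1,\ldots,c_5)$ is exactly the paper's own framework, but it only trades one open statement for another: the brackets that arise from links such as $N_0\geq_3 N_1$ or $N_2\geq_1 M_0$ have mixed-sign coefficient vectors whose eventual positivity is itself only conjectured in the paper (Conjecture \ref{conjecture:positivitycor}), and you supply no new decomposition, no concrete application of the Weierstrass identities, and no proof that the $(1-q)^k$ device actually applies to any of the specific series in question. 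Since you end by conceding that these are the very points that keep the statement conjectural, nothing in the proposal establishes the conjecture; as a proof attempt it is vacuous, even though as a description of how one might attack the problem it is sensible and consistent with the machinery of Sections 5--7.

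There is also a concrete error in the reduction itself. You claim that in every link the mock contributions either cancel or can be removed using $p(n)=N_0+2(N_1+\cdots+N_5)$. That fails already at residue $0$: by (\ref{equation:Qmockpart0}) the difference generating $N_0-N_1$ carries the mock part $-3q^2g(q^2;q^{11})$ and the one generating $N_1-N_2$ carries $+q^2g(q^2;q^{11})$, and since a two-term inequality fixes the linear combination of dissection elements, you are not free to add multiples of the partition identity without changing the statement being proved. The same issue occurs at residues $4,7,9,10$ for the links straddling a single $g$-term. So several links are not even reducible to pure theta-positivity by your scheme; any genuine attack on them must handle the Appell--Lerch part (for instance via its own non-negativity properties), which your proposal does not address.
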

\begin{remark} Some of the inequalities of Conjecture \ref{conjecture:rankcrankconj} are proved in Corollary \ref{corollary:inequallevel2level4}. 
\end{remark}
\begin{remark} Inequalities between ranks where considered by  Bringmann and Kane \cite{BK} by using analytic methods. For $0 \leq a < b \leq 5$ and for $n > N_{a,b}$, where $N_{a,b}$ is an explicit constant, we have the inequality
\begin{equation*} 
N(a,11,n) > N(b,11,n).
\end{equation*}
\end{remark} 
As a generalization of rank-crank inequalities we can state the following conjectures in terms of Definition \ref{definition:residuebrackets} and Definition \ref{definition:v11G11Theta}.
\begin{definition} We define notation $F(q) \geq_{m} G(q)$
if for $F(q) = \sum_{n \geq 0} a(n)q^n$ and for $G(q) = \sum_{n \geq 0} b(n)q^n$ we have $a(n)\geq b(n)$ for $n \geq m$.
\end{definition}
\begin{conjecture} \label{conjecture:positivitycor} For any $c_k \in \Q$, $1 \leq k \leq 6$ and residue $i \neq 6$ modulo $11$ there is $N \in \N_{0}$ such that
\begin{equation*}
    [c_1,c_2,c_3,c_4,c_5;c_6]_{i} \geq_N 0  \ \ \text{or} \ \ [c_1,c_2,c_3,c_4,c_5;c_6]_{i} \leq_N 0,
\end{equation*}
For any $a_k \in \Q$, $1 \leq k \leq 5$ there is $N \in \N_{0}$ such that
\begin{equation*}
    \Theta(a_1,a_2,a_3,a_4,a_5) \geq_N 0  \ \ \text{or} \ \ \Theta(a_1,a_2,a_3,a_4,a_5) \leq_N 0.
\end{equation*}
\end{conjecture}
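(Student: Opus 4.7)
The plan is to treat each bracket $[c_1,c_2,c_3,c_4,c_5;c_6]_i$ and each $\Theta(a_1,\ldots,a_5)$ as a weakly holomorphic modular form on a congruence subgroup (most naturally $\Gamma_1(121)$) and to read off the eventual sign of its Fourier coefficients from a Hardy--Ramanujan--Rademacher asymptotic expansion.

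First I would rewrite every theta product appearing in Definitions \ref{definition:mainbracket}, \ref{definition:residuebrackets}, and \ref{definition:v11G11Theta} as an eta-quotient, using that $P_j$ is a Jacobi theta of weight $1/2$ and that $J_1$, $J_{11}$ are (normalised) eta-products. Within each bracket all terms share the common prefactor $J_{11}^2/J_1^3$ (respectively $J_{11}^6/J_1^2$ for $\Theta$), so they have the same weight and the same multiplier system; consequently any rational linear combination of them is a weakly holomorphic modular form of that weight, whose principal parts at the cusps of $\Gamma_0(121)$ are rational-linear functions of the parameters $c_k$ (respectively $a_k$). The orders at each cusp can be read off from the standard eta-quotient formula.

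Next, I would apply the Rademacher expansion (in the classical form of Rademacher and Zuckerman when the weight is $\ge 1$, or in the Bringmann--Ono extension to weakly holomorphic forms when needed) to obtain an asymptotic
\begin{equation*}
A(n) \;=\; \sum_{\mathfrak{a}}\, A_{\mathfrak{a}}\, n^{-\alpha_{\mathfrak{a}}}\, \exp\!\bigl(2\pi\sqrt{\beta_{\mathfrak{a}} n}\,\bigr)\,\bigl(1+O(n^{-1/2})\bigr),
\end{equation*}
where the sum runs over the cusps at which the principal part has a pole, the $\beta_{\mathfrak{a}}>0$ are the orders of those poles, and the leading constants $A_{\mathfrak{a}}$ are explicit polynomials in the parameters (involving Kloosterman-type sums). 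The cusp $\mathfrak{a}^{\ast}$ with the largest $\beta_{\mathfrak{a}}$ at which $A_{\mathfrak{a}}\neq 0$ dominates all other terms, so $\operatorname{sgn}A(n)=\operatorname{sgn}A_{\mathfrak{a}^{\ast}}$ for every $n$ exceeding an explicit $N$; this proves the dichotomy asserted by the conjecture. If every $A_{\mathfrak{a}}$ vanishes then the form is holomorphic and bounded on the upper half-plane, hence zero, and the statement is vacuous.

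The main obstacle is the degenerate locus: on rational hyperplanes in parameter space where the nominally dominant cusp already has $A_{\mathfrak{a}}=0$, one has to descend to the next cusp and iterate. Showing that this descent terminates short of the zero form reduces to a linear-independence statement for the eta-quotients in each bracket modulo the subspace of forms supported only at cusps of smaller principal part, which in turn is a dimension count in $M_k^{!}(\Gamma_0(121))$. These counts are standard in principle but intricate at level $121$, where the six (respectively five) eta-quotients in each bracket have overlapping support at several cusps and the relevant rank has to be verified cusp-by-cusp. A secondary, more technical, obstacle is making $N$ effective: uniform control of the $O(n^{-1/2})$ error, with constant depending polynomially on $(c_1,\ldots,c_6)$, is required to pin down $N$ explicitly, and is the likely reason the statement is only offered as a conjecture here.
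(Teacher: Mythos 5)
The paper offers no proof to compare against: this statement is stated as Conjecture~\ref{conjecture:positivitycor} and is left open, so your proposal is an attempt at new ground rather than a reconstruction. As a strategy sketch it is reasonable (the brackets and $\Theta$ are indeed linear combinations of generalized eta-quotients of a fixed weight and multiplier on level $11$ --- note: level $11$, not $121$, and one needs the generalized eta-quotient order formula of Biagioli/Robins rather than the classical one), but the argument has a genuine gap at exactly the point where the conjecture is hard. In the Rademacher expansion of a weakly holomorphic form, the contribution of a pole at a cusp $a/c$ with $c>1$ enters the $n$-th coefficient multiplied by Kloosterman-type finite exponential sums carrying phases such as $e^{-2\pi i na/c}$; after summing over the cusps of an orbit the main term is real but in general \emph{periodically oscillating in $n$}, and it can vanish or change sign on residue classes mod $c$. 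Your assertion that $\operatorname{sgn}A(n)=\operatorname{sgn}A_{\mathfrak{a}^{\ast}}$ for all large $n$ therefore does not follow from identifying the cusp of maximal growth; one must additionally prove that the dominant term is single-signed in $n$ (or cascade into the residue classes where it vanishes), and this single-signedness is essentially the content of the conjecture itself.

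A second concrete error is the degenerate case: if all principal parts vanish, the form is a holomorphic modular form of weight $1$ (for the brackets) or $1/2$ (for $\Theta$), not a bounded function, and holomorphy at every cusp does not force it to be zero in positive weight. This case genuinely occurs --- O'Brien's identity (\ref{identity:obrien}) exhibits $J_{11}^2$, whose coefficients alternate in sign, as such a holomorphic combination in the unsubscripted bracket --- so the eventual-sign dichotomy for the holomorphic locus needs its own argument (for weight $1/2$ one could invoke Serre--Stark to reduce to unary theta series, but that must be carried out, and the weight-$1$ case is harder since cusp forms there typically have sign changes). Together with the effectivity issue you already flag, these two points mean the proposal as written does not close the conjecture: the dominant-term sign constancy is assumed rather than proved, and the fallback when the exponential main terms cancel is based on a false dichotomy.
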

We have the following corollary of Conjecture \ref{conjecture:positivitycor}.
\begin{conjecture} \label{conjecture:arbinequal} For any $a_k, b_k \in \Z$, $0 \leq k \leq 5$, such that 
\begin{equation*}
\sum_{k=0}^{5}(a_k+b_k)=0,
\end{equation*}
and residue $m$ modulo $11$ there is $N \in \N_{0}$ such that 
\begin{gather*}
\sum_{k=0}^{5}\Big [a_kN(k,11,11n+m) + b_kM(k,11,11n+m)\Big] \geq_N 0 \ \ \text{or}\\
\sum_{k=0}^{5}\Big [a_kN(k,11,11n+m) + b_kM(k,11,11n+m)\Big] \leq_N 0.
\end{gather*}
Notation $A_n \geq_m B_n$ means that $A_n \geq B_n$ for all $n \geq m$.
\end{conjecture}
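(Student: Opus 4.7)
The plan is to reduce Conjecture \ref{conjecture:arbinequal} to Conjecture \ref{conjecture:positivitycor} by expressing any admissible linear combination of rank and crank counts as a rational combination of the theta-product brackets $[c_1,\dots,c_5;c_6]_m$ (or $\Theta(a_1,\dots,a_5)$ when $m=6$), up to a contribution from the mock parts of the rank dissection. First, using the $11$-dissection elements $Q_{k,m}(q)$ and $Q^{C}_{k,m}(q)$ from Section \ref{section:devrank} and Theorem \ref{theorem:crankdissection}, together with (\ref{definition:rankdisselem}) and (\ref{definition:crankdisselem}), observe that for any integers $a_k, b_k$ with $0 \leq k \leq 5$ satisfying $\sum_{k}(a_k+b_k) = 0$, the generating function
\[
F_m(q) := \sum_{k=0}^{5} a_k\, Q_{k,m}(q) + \sum_{k=0}^{5} b_k\, Q^{C}_{k,m}(q)
\]
has $q^n$-coefficient equal to $\sum_{k} a_k N(k,11,11n+m) + \sum_{k} b_k M(k,11,11n+m)$, since the hypothesis on the coefficient sum makes all $p(11n+m)/11$ contributions cancel. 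Hence Conjecture \ref{conjecture:arbinequal} is equivalent to the eventual sign-definiteness of the Fourier coefficients of $F_m(q)$.

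Next, by the explicit formulas for the $Q_{k,m}(q)$ recorded in Section \ref{section:devrank} and for the $Q^{C}_{k,m}(q)$ implicit in Theorem \ref{theorem:crankdissection}, decompose $F_m(q) = F_m^{\mathrm{th}}(q) + F_m^{\mathrm{mck}}(q)$, where $F_m^{\mathrm{th}}(q)$ is a $\Q$-linear combination of brackets $[c_1,\dots,c_5;c_6]_m$ for $m \neq 6$ and of $\Theta(a_1,\dots,a_5)$ for $m = 6$. For the six residues $m \in \{1,2,3,5,6,8\}$ no mock parts appear in Theorem \ref{theorem:rankdissection}, so $F_m^{\mathrm{mck}} \equiv 0$ and Conjecture \ref{conjecture:positivitycor} directly yields an $N$ such that $F_m(q) \geq_N 0$ or $F_m(q) \leq_N 0$, finishing those cases.

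For the remaining residues $m \in \{0,4,7,9,10\}$, the mock part $F_m^{\mathrm{mck}}(q)$ is, by (\ref{equation:Qmockpart0}), (\ref{equation:Qmockpart4}), (\ref{equation:Qmockpart7}), (\ref{equation:Qmockpart9}) and (\ref{equation:Qmockpart10}), a single rational multiple of one of the universal mock theta contributions $q^{2}g(q^{2};q^{11})$, $q^{3}g(q^{4};q^{11})$, $q^{3}g(q^{5};q^{11})$, $q^{2}g(q^{3};q^{11})$ or $q^{-1}+g(q;q^{11})$. The plan here is to establish, as an auxiliary input, that the Fourier coefficients of each such fixed mock theta piece are eventually of one sign, and then to combine this with Conjecture \ref{conjecture:positivitycor} applied to $F_m^{\mathrm{th}}(q)$ to deduce eventual sign-definiteness of $F_m(q)$ itself.

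The main obstacle is precisely this combination step: whereas Conjecture \ref{conjecture:positivitycor} governs theta products in isolation, the mock theta function $g(q^{c};q^{11})$ has Fourier coefficients of the same exponential order of growth as the weight-$1/2$ theta quotients $[\,\cdots\,;\,\cdots\,]_m$, so cancellations between $F_m^{\mathrm{th}}$ and $F_m^{\mathrm{mck}}$ can a priori persist beyond any fixed bound and must be controlled uniformly. A Hardy--Ramanujan--Rademacher style asymptotic expansion for the holomorphic projection of $g(q^{c};q^{11})$, set against the modular asymptotics of the bracket basis in Definition \ref{definition:residuebrackets}, is the natural route; alternatively, one would seek a strengthening of Conjecture \ref{conjecture:positivitycor} that directly encompasses mixed theta/mock expressions. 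It is exactly because this asymptotic comparison is delicate that the statement is presented only as a corollary of Conjecture \ref{conjecture:positivitycor} rather than as a theorem.
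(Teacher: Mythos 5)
This statement is a conjecture: the paper gives no proof, only the assertion that it is a corollary of Conjecture \ref{conjecture:positivitycor}, with the intended mechanism indicated by Remark \ref{remark: constructineq}. Your reduction is exactly that mechanism for the residues $m\in\{1,2,3,5,6,8\}$: once the $p(11n+m)/11$ contributions cancel (note this needs the coefficient sum over \emph{all} $k$ from $0$ to $5$; the paper's ``$\sum_{k=1}^{5}$'' is evidently a slip, and you implicitly use the corrected condition), the generating function is $\sum_k a_kQ_{k,m}+\sum_k b_kQ^{C}_{k,m}$, and for these residues every $Q_{k,m}$ and every $Q^{C}_{k,m}$ is a rational bracket $[c_1,\dots,c_5;c_6]_m$ (resp.\ a rational $\Theta(a_1,\dots,a_5)$ for $m=6$, where $Q^{C}_{k,6}=0$), so Conjecture \ref{conjecture:positivitycor} immediately gives eventual sign-definiteness. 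That part is correct and coincides with the route the paper has in mind.

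For $m\in\{0,4,7,9,10\}$, however, your own analysis shows the reduction is incomplete, and this is a genuine gap in the proposal as a derivation of Conjecture \ref{conjecture:arbinequal} from Conjecture \ref{conjecture:positivitycor}. The mock part of the combination is an integer multiple of a single fixed term, e.g.\ $(a_1-2a_0)\,q^2g(q^2;q^{11})$ at $m=0$ and $(a_4-a_5)\,q^{3}g(q^{4};q^{11})$ at $m=4$, and unless that integer vanishes the combination is not a bracket, so Conjecture \ref{conjecture:positivitycor} simply does not apply to it. Your proposed remedy (eventual positivity of the fixed mock pieces plus a Hardy--Ramanujan--Rademacher style comparison against the bracket part) is only a plan: the decisive step --- ruling out persistent cancellation between a mock term and an \emph{arbitrary} rational bracket, both with exponentially growing coefficients --- is left unproved, and no strengthened conjecture covering such mixed mock/theta expressions is formulated in the paper. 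To be fair, the same lacuna is present in the paper's bare claim that the statement is a ``corollary'' of Conjecture \ref{conjecture:positivitycor}; a clean conditional argument would either restrict to combinations whose mock coefficient cancels (e.g.\ $a_1=2a_0$ at residue $0$, $a_4=a_5$ at residue $4$, and so on) or posit the stronger positivity statement you describe. So your proposal does not close the implication, but it does isolate precisely where the paper's own assertion is not immediate.
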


\section{Proofs of new congruences} \label{section:momentscalc}
\begin{proof}[Proof of Theorem \ref{theorem:crankmomentcong}] By properties of the crank we are able to deduce
\begin{equation*} 
M_{k}(n) \equiv \sum_{m = 1}^{10} m^k M(m,11,n) \Mod {11}.
\end{equation*}
Then consider
\begin{align*}
M_{2}(n) \equiv 2M(1,11,n)-3M(2,11,n)-4M(3,11,n)-M(4,11,n)+6M(5,11,n) \Mod {11},\\
M_{4}(n) \equiv 2M(1,11,n)-M(2,11,n)-3M(3,11,n)+6M(4,11,n)-4M(5,11,n) \Mod {11},\\
M_{6}(n) \equiv 2M(1,11,n)-4M(2,11,n)+6M(3,11,n)-3M(4,11,n)-M(5,11,n) \Mod {11},\\
M_{8}(n) \equiv 2M(1,11,n)+6M(2,11,n)-M(3,11,n)-4M(4,11,n)-3M(5,11,n) \Mod {11}.
\end{align*}
Taking $n=11l+m$ in congruences above and using Theorem \ref{theorem:crankdissection} we obtain Theorem \ref{theorem:crankmomentcong}. For example using notation (\ref{definition:disselemcrankmoment}) with $m=1$ we have
\begin{multline*}
T^{C}_{2,1}(q) = \sum_{n=0}^{\infty}M_{2}(11n+1)q^n \equiv  2Q^{C}_{1,1}(q)-3Q^{C}_{2,1}(q)-4Q^{C}_{3,1}(q)-Q^{C}_{4,1}(q)+6Q^{C}_{5,1}(q)  \\ \equiv 2 \frac{J_{11}^2P_5}{P_2P_3} \Mod {11}. \qedhere
\end{multline*}
\end{proof}

\begin{proof}[Proof of Theorem \ref{theorem:rankmomentcong}]
By properties of the rank we are able to deduce
\begin{equation*} 
N_{k}(n) \equiv \sum_{m = 1}^{10} m^k N(m,11,n) \Mod {11}.
\end{equation*}
Then consider
\begin{align*}
N_{2}(n) \equiv 2N(1,11,n)-3N(2,11,n)-4N(3,11,n)-N(4,11,n)+6N(5,11,n) \Mod {11},\\
N_{4}(n) \equiv 2N(1,11,n)-N(2,11,n)-3N(3,11,n)+6N(4,11,n)-4N(5,11,n) \Mod {11},\\
N_{6}(n) \equiv 2N(1,11,n)-4N(2,11,n)+6N(3,11,n)-3N(4,11,n)-N(5,11,n) \Mod {11},\\
N_{8}(n) \equiv 2N(1,11,n)+6N(2,11,n)-N(3,11,n)-4N(4,11,n)-3N(5,11,n) \Mod {11}.
\end{align*}
Taking $n=11l+m$ in congruences above and using Theorem \ref{theorem:rankdissection} and calculations in Section \ref{section:devrank} we obtain Theorem \ref{theorem:rankmomentcong}. For example using notation (\ref{definition:disselemrankmoment}) with $m=1$ we have
\begin{multline*}
T_{2,1}(q) = \sum_{n=0}^{\infty}N_{2}(11n+1)q^n \equiv  2Q_{1,1}(q)-3Q_{2,1}(q)-4Q_{3,1}(q)-Q_{4,1}(q)+6Q_{5,1}(q)  \\ \equiv [0,-1,-5,-4,-3;-2]_{1} \Mod {11}.\qedhere
\end{multline*}
\end{proof}

\begin{proof}[Proof of Theorem \ref{theorem:sptcong}] Andrews \cite{A} showed that $\operatorname{spt}(n)$  is related to the second rank moment
\begin{equation} \label{identity:andrewssptform}
\operatorname{spt}(n) = np(n) - \frac{1}{2}N_2(n).
\end{equation}
As an example let us consider congruence
\begin{equation*}
\sum_{n=0}^{\infty}\operatorname{spt}(11n+1)q^n \equiv [1,5,-4,1,-5;1]_{1} \Mod {11}.
\end{equation*}
We know that
\begin{equation*}
\sum_{n=0}^{\infty} (11n+1)p(11n+1)q^n \equiv [1,-1,-1,-1,-1;0]_{1} \Mod {11}
\end{equation*}
by Theorem \ref{theorem:pcong} and
\begin{equation*}
\sum_{n=0}^{\infty} N_2(11n+1)q^n \equiv [0,-1,-5,-4,-3;-2]_{1} \Mod {11}
\end{equation*}
by Theorem \ref{theorem:rankmomentcong}. Using (\ref{identity:andrewssptform}) we obtain the desired congruence.
\end{proof}

\begin{proof}[Proof of Corollary \ref{corollary:eisensteincong}]
In terms of notation (\ref{definition:disselemrankmoment}) from \cite[Theorem 5.1]{G10} we know
\begin{align*}
T_{2,6}(q) &\equiv 3J_1^{13} \Mod {11},\\
T_{6,6}(q) &\equiv J_1^{13}(4+E_4(q)) \Mod {11},\\
T_{8,6}(q) &\equiv J_1^{13}(5+6E_4(q)+6E_6(q)) \Mod {11}.
\end{align*}  
Using Theorem \ref{theorem:rankmomentcong} and applying
\begin{equation*}
J_1^{13} \equiv J_1^2J_{11} \Mod {11},
\end{equation*}
we obtain Corollary \ref{corollary:eisensteincong}.
\end{proof}

\begin{remark} We also can deduce congruences modulo $11$ for rank moments and $\operatorname{spt}(n)$ corresponding to residues $i \in \{0,4,7,9,10\}$, but they consist of the universal mock theta functions $g(x;q)$. For example for residue $0$ in terms of notation (\ref{definition:disselemrankmoment}) we have \begin{align*}
T_{2,0}(q) \equiv T_{4,0}(q) &\equiv 2q^2g(q^2;q^{11})+[0,0,4,5,1;-2]_{0} \Mod {11},\\
T_{6,0}(q) &\equiv 2q^2g(q^2;q^{11})+[0,3,-2,1,-4;-2]_{0} \Mod {11},\\
T_{8,0}(q) &\equiv 2q^2g(q^2;q^{11})+[0,4,1,0,2;-2]_{0} \Mod {11},
\end{align*}
and
\begin{equation*}
\sum_{n=0}^{\infty}\operatorname{spt}(11n)q^n \equiv -q^2g(q^2;q^{11})+[0,0,-2,3,5;1]_{0} \Mod {11}.
\end{equation*}
\end{remark}

\section{Full proofs for new crank-rank inequalities} \label{section:fullcalc}
In this section we give the full account of calculations for the proofs of Theorem \ref{theorem:rankcrankbasicineq}, Corollary \ref{corollary:inequallevel2level4}, Corollary \ref{corollary:inequallevel6}, Theorem \ref{theorem:inequalrank6basis} and Corollary \ref{corollary:ineqrank6level4level6}.

\begin{definition} Using the notation of (\ref{definition:rankdisselem}) and (\ref{definition:crankdisselem}), we define
\begin{align*}
(a_0,a_2,a_3,a_4,a_5;b_0,b_1)_{0}&:=\sum_{i=0}^{5} a_i Q_{i,0}(q) + b_0 Q^{C}_{0,0}(q) + b_1 Q^{C}_{1,0}(q),\\
(a_0,a_2,a_3,a_4,a_5;b_0,b_1,b_2)_{1}&:=\sum_{i=0}^{5} a_i Q_{i,1}(q) + \sum_{i=0}^{2} b_i Q^{C}_{i,1}(q),\\
(a_0,a_2,a_3,a_4,a_5;b_0,b_2)_{2}&:=\sum_{i=0}^{5} a_i Q_{i,2}(q) + b_0 Q^{C}_{0,2}(q) + b_2 Q^{C}_{2,2}(q),\\
(a_0,a_2,a_3,a_4,a_5;b_0,b_1)_{3}&:=\sum_{i=0}^{5} a_i Q_{i,3}(q) + b_0 Q^{C}_{0,3}(q) + b_1 Q^{C}_{1,3}(q),\\
(a_0,a_2,a_3,a_4,a_5;b_0,b_1)_{4}&:=\sum_{i=0}^{5} a_i Q_{i,4}(q) + b_0 Q^{C}_{0,4}(q) + b_1 Q^{C}_{1,4}(q),\\
(a_0,a_2,a_3,a_4,a_5;b_0,b_2)_{5}&:=\sum_{i=0}^{5} a_i Q_{i,5}(q) + b_0 Q^{C}_{0,5}(q) + b_2 Q^{C}_{2,5}(q),\\
(a_0,a_1,a_2,a_3,a_4,a_5)_{6}&:=\sum_{i=0}^{5} a_i Q_{i,6}(q),\\
(a_0,a_1,a_2,a_3,a_4,a_5;b_0,b_1)_{7}&:=\sum_{i=0}^{5} a_i Q_{i,7}(q) + b_0 Q^{C}_{0,7}(q) + b_1 Q^{C}_{1,7}(q),\\
(a_0,a_1,a_2,a_3,a_4,a_5;b_0,b_1)_{8}&:=\sum_{i=0}^{5} a_i Q_{i,8}(q) + b_0 Q^{C}_{0,8}(q) + b_1 Q^{C}_{1,8}(q),\\
(a_0,a_1,a_2,a_3,a_4,a_5;b_0,b_1)_{9}&:=\sum_{i=0}^{5} a_i Q_{i,9}(q) + b_0 Q^{C}_{0,9}(q) + b_1 Q^{C}_{1,9}(q),\\
(a_0,a_1,a_2,a_3,a_4,a_5;b_0,b_3)_{10}&:=\sum_{i=0}^{5} a_i Q_{i,10}(q) + b_0 Q^{C}_{0,10}(q) + b_3 Q^{C}_{3,10}(q).
\end{align*} 
\end{definition}

\begin{proof}[Full proof of Theorem \ref{theorem:rankcrankbasicineq}] The calculations below can be derived directly from calculations of the dissection elements $Q_{a,m}(q)$ from Section \ref{section:devrank} and calculations of the dissection elements $Q^{C}_{a,m}(q)$ from Theorem \ref{theorem:crankdissection}. The positivity of sums of theta quotients can be derived from Proposition \ref{proposition:maincomparison} and Lemma \ref{lemma:addcomparison}. The inequalities with $N_i = N(i,11,11n)$ and $M_i = M(i,11,11n)$ are equivalent to 
\begin{align*}
(1,2,-2,0,-1,0;-1,1)_{0} &= [0,11,0,0,0;0]_{0} \geq 0,\\
(1,2,3,-3,0,-3;-1,1)_{0} &= [0,-11,11,0,0;0]_{0} \geq 0,\\
(0,0,2,1,-4,1;0,0)_{0} &= [0,0,-11,11,0;0]_{0} \geq 0,\\
(-1,-2,1,5,3,-6;1,-1)_{0} &= [0,0,0,-11,11;0]_{0} \geq 0.
\end{align*} 
The inequalities with $N_i = N(i,11,11n+1)$ and $M_i = M(i,11,11n+1)$ are equivalent to 
\begin{align*}
(1,-2,4,-3,1,-1;1,0,-1)_{1} &= [0,11,0,0,0;0]_{1} \geq 0,\\
(1,3,-4,6,-6,0;1,0,-1)_{1} &= [0,-11,11,0,0;0]_{1} \geq 0,\\
(0,2,-1,-3,6,-4;0,0,0)_{1} &= [0,0,-11,11,0;0]_{1} \geq 0,\\
(-1,1,2,-1,-4,3;-2,-1,3)_{1} &= [0,0,0,-11,11;0]_{1} \geq 0,\\
(-1,-1,3,2,1,-4;-2,-1,3)_{1} &= [0,0,0,0,-11;11]_{1} \geq 0.
\end{align*} 
The inequalities with $N_i = N(i,11,11n+2)$ and $M_i = M(i,11,11n+2)$ are equivalent to 
\begin{align*}
(-2,0,3,0,1,-2;0,0)_{2} &= [0,0,0,0,0;11]_{2} \geq 0\\
(0,2,-2,2,-3,1;2,-2)_{2} &= [0,11,0,0,0;-11]_{2} \geq 0\\
(3,-1,2,-1,0,-3;-1,1)_{2} &= [0,-11,11,0,0;0]_{2} \geq 0\\
(2,1,-4,1,3,-3;1,-1)_{2} &= [0,0,-11,11,0;0]_{2} \geq 0\\
(1,3,1,-8,-5,8;3,-3)_{2} &= [0,0,0,-11,11;0]_{2} \geq 0.
\end{align*} 
The inequalities with $N_i = N(i,11,11n+3)$ and $M_i = M(i,11,11n+3)$ are equivalent to 
\begin{align*}
(1,-1,0,2,0,-2;-1,1)_{3} &= [0,11,0,0,0;0]_{3} \geq 0,\\
(-2,5,2,-4,2,-3;0,0)_{3} &= [0,-11,11,0,0;0]_{3} \geq 0,\\
(2,-1,-3,4,-3,1;1,-1)_{3} &= [0,0,-11,0,0;11]_{3} \geq 0,\\
(-1,-1,6,-2,-5,3;-5,5)_{3} &= [0,0,0,11,0;-11]_{3} \geq 0,\\
(4,2,-7,-3,4,0;3,-3)_{3} &= [0,0,0,-11,11;0]_{3} \geq 0.
\end{align*} 
The inequalities with $N_i = N(i,11,11n+4)$ and $M_i = M(i,11,11n+4)$ are equivalent to 
\begin{align*}
(-2,4,-3,3,-1,-1;-5,5)_{4} &= [0,11,0,0,0;0]_{4} \geq 0,\\
(4,-5,5,-2,-1,-1;3,-3)_{4} &= [0,-11,11,0,0;0]_{4} \geq 0,\\
(-2,3,1,-2,0,0;1,-1)_{4} &= [0,0,-11,11,0;0]_{4} \geq 0,\\
(3,0,-3,-2,1,1;-1,1)_{4} &= [0,0,0,-11,11;0]_{4} \geq 0.
\end{align*} 
The inequalities with $N_i = N(i,11,11n+5)$ and $M_i = M(i,11,11n+5)$ are equivalent to 
\begin{align*}
(3,-2,1,-3,0,1;-2,2)_{5} &= [0,11,0,0,0;0]_{5} \geq 0,\\
(-3,7,-2,-1,1,-2;-3,3)_{5} &= [0,-11,11,0,0;0]_{5} \geq 0,\\
(2,-3,1,3,-4,1;4,-4)_{5} &= [0,0,-11,11,0;0]_{5} \geq 0,\\
(-4,-2,4,7,2,-7;0,0)_{5} &= [0,0,0,-11,11;0]_{5} \geq 0,\\
(-2,4,1,-6,2,1;-5,5)_{5} &= [0,0,11,0,0;-11]_{5} \geq 0.
\end{align*} 
The inequalities with $N_i = N(i,11,11n+7)$ and $M_i = M(i,11,11n+7)$ are equivalent to 
\begin{align*}
(1,0,0,-2,2,-1;2,-2)_{7} &= [0,11,0,0,0;0]_{7} \geq 0,\\
(3,1,1,7,-5,4;-4,-7)_{7} &= [0,-11,11,0,0;0]_{7} \geq 0,\\
(-4,4,4,-6,3,-1;-4,4)_{7} &= [0,0,-11,11,0;0]_{7} \geq 0,\\
(1,-2,-2,5,2,-4;0,0)_{7} &= [0,0,0,-11,11;0]_{7} \geq 0.
\end{align*} 
The inequalities with $N_i = N(i,11,11n+8)$ and $M_i = M(i,11,11n+8)$ are equivalent to 
\begin{align*}
(1,-3,2,0,3,-3;5,-5)_{8} &= [0,11,0,0,0;0]_{8} \geq 0,\\
(-1,2,-2,4,-5,2;-3,3)_{8} &= [0,-11,11,0,0;0]_{8} \geq 0,\\
(-3,2,5,-5,-1,2;-1,1)_{8} &= [0,0,-11,11,0;0]_{8} \geq 0,\\
(7,4,-8,-1,5,4;-4,-7)_{8} &= [0,0,0,-11,11;0]_{8} \geq 0,\\
(5,6,-1,4,2,-5;-6,-5)_{8} &= [0,0,0,-11,0;11]_{8} \geq 0.
\end{align*} 
The inequalities with $N_i = N(i,11,11n+9)$ and $M_i = M(i,11,11n+9)$ are equivalent to 
\begin{align*}
(0,-2,4,-1,-1,0;1,-1)_{9} &= [0,11,0,0,0;0]_{9} \geq 0,\\
(-1,4,-3,1,1,-2;0,0)_{9} &= [0,-11,11,0,0;0]_{9} \geq 0,\\
(4,-2,-5,3,3,-3;4,-4)_{9} &= [0,0,-11,11,0;0]_{9} \geq 0,\\
(-2,3,4,-6,-6,7;-3,3)_{9} &= [0,0,0,-11,11;0]_{9} \geq 0.
\end{align*} 
The inequalities with $N_i = N(i,11,11n+10)$ and $M_i = M(i,11,11n+10)$ are equivalent to 
\begin{align*}
(0,3,-2,-2,-1,2;-2,2)_{10} &= [0,11,0,0,0;0]_{10} \geq 0,\\
(3,-3,1,1,-2,0;1,-1)_{10} &= [0,-11,11,0,0;0]_{10} \geq 0,\\
(-1,2,-1,-1,4,-3;-1,1)_{10} &= [0,0,-11,11,0;0]_{10} \geq 0,\\
(-6,-6,6,6,-3,-8;6,5)_{10} &= [0,0,0,-11,11;0]_{10} \geq 0.\qedhere
\end{align*} 
\end{proof}

\begin{proof}[Full proof of Corollary \ref{corollary:inequallevel2level4}] The calculations below can be derived directly from calculations of the dissection elements $Q_{a,m}(q)$ from Section \ref{section:devrank} and calculations of the dissection elements $Q^{C}_{a,m}(q)$ from Theorem \ref{theorem:crankdissection}. The positivity of sums of theta quotients can be derived from Proposition \ref{proposition:maincomparison} and Lemma \ref{lemma:addcomparison}. The inequalities with $N_i = N(i,11,11n)$ and $M_i = M(i,11,11n)$ are equivalent to 
\begin{align*}
(0,0,0,0,-1,0;0,1)_{0} &= [0,1,-2,2,0;0]_{0} \geq 0,\\
(0, 0, 1, 1, -1, 0; 0, -1)_{0} &= [0, 0, -4, 3, 1; 0]_{0} \geq 0.
\end{align*} 
The inequalities with $N_i = N(i,11,11n+1)$ and $M_i = M(i,11,11n+1)$ are equivalent to 
\begin{align*}
(0,0,1,0,0,0;0,0,-1)_{1} &=  [0,1,0,-1,0;1]_{1} \geq 0,\\
(0,0,0,0,-1,0;0,0,1)_{1} &=  [0,0,1,-1,1;0]_{1} \geq 0,\\
(0, 0, 1, -1, 1, -1; 0, 0, 0)_{1} &= [0, 2, -2, 1, 0; 1]_{1} \geq 0.
\end{align*} 
The inequalities with $N_i = N(i,11,11n+2)$ and $M_i = M(i,11,11n+2)$ are equivalent to 
\begin{align*}
(0,1,0,0,0,0;0,-1)_{2} &= [0,2,-2,1,1;1]_{2} \geq 0\\
(0,0,0,0,0,-1;1,0)_{2} &= [0,0,0,1,0;1]_{2} \geq 0,\\
(0, 0, -1, 0, 0, -1; 2, 0)_{2} &= [0, 1, -2, 2, 0; 0]_{2} \geq 0,\\
(-1, 0, 1, 0, 0, -1; 1, 0)_{2} &= [0, 1, 0, 0, 0; 4]_{2} \geq 0,\\
(0, 1, 0, 1, 0, 0; -1, -1)_{2} &= [0, 3, -1, 1, 0; -1]_{2} \geq 0,\\
(0, 1, 0, 0, -1, 1; 0, -1)_{2} &= [0, 3, 0, -1, 1; -2]_{2} \geq 0.
\end{align*} 
The inequalities with $N_i = N(i,11,11n+3)$ and $M_i = M(i,11,11n+3)$ are equivalent to 
\begin{align*}
(1,0,0,0,0,0;-1,0)_{3} &= [0,4,0,0,2;-2]_{3} \geq 0,\\
(0,0,0,0,-1,0;0,1)_{3} &= [0,-1,-1,1,0;1]_{3}  \geq 0,\\
(1, 0, -2, 0, 0, 0; 1, 0)_{3} &= [0, 0, -2, -2, 2; 2]_{3} \geq 0,\\
(0, 0, 1, 0, -1, 0; -1, 1)_{3} &= [0, 1, 0, 2, 0; -1]_{3} \geq 0,\\
(1, 0, -1, 1, 0, 0; 0, -1)_{3} &= [0, 2, -2, -1, 1; 2]_{3} \geq 0.
\end{align*} 
The inequalities with $N_i = N(i,11,11n+4)$ and $M_i = M(i,11,11n+4)$ are equivalent to 
\begin{align*}
(0,1,0,0,0,0;-1,0)_{4} &= [0,1,-1,1,1;0]_{4} \geq 0,\\
(-1, 2, 0, 0, 0, 0;-1, 0)_{4} &= [0, 2, -4, 4, 0; 0]_{4} \geq 0,\\
(0, 1, -1, 0, 0, 0; -1, 1)_{4} &= [0, 2, -1, -1, 2; 0]_{4} \geq 0,\\
(0, 1, 0, 1, 0, 0; -2, 0)_{4} &= [0, 2, 1, 0, 0; 0]_{4} \geq 0.
\end{align*} 
The inequalities with $N_i = N(i,11,11n+5)$ and $M_i = M(i,11,11n+5)$ are equivalent to 
\begin{align*}
(0,0,1,0,0,0;-1,0)_{5} &= [0,1,0,1,1;-2]_{5}\geq 0,\\
(0,0,0,0,0,-1;0,1)_{5} &= [0,1,1,-1,1;1]_{5} \geq 0,\\
(0, 0, 1, 1, 0, 0; -1, -1)_{5} &= [0, 0, -1, 1, 1; -1]_{5} \geq 0,\\
(-1, 0, 1, 0, 0, -1; 0, 1)_{5} &= [0, 0, 1, 0, 2; -3]_{5} \geq 0,\\
(1, -1, 0, 0, -1, 0; 1, 0)_{5} &= [0, 2, -2, 2, 0; 1]_{5} \geq 0,\\
(0, -1, 1, 1, 0, -1; 0, 0)_{5} &= [0, 2, -1, -1, 2; 0]_{5} \geq 0,\\
(1, -1, 1, 0, 0, 0; -1, 0)_{5} &= [0, 4, -1, 0, 1; 0]_{5} \geq 0.
\end{align*} 
The inequalities with $N_i = N(i,11,11n+7)$ and $M_i = M(i,11,11n+7)$ are equivalent to 
\begin{align*}
(1,0,0,0,0,0;0,-1)_{7} &= [1,3,1,-1,-1;0]_{7} \geq 0,\\
(0,0,0,0,-1,0;1,0)_{7} &= [0,-1,1,1,0;0]_{7} \geq 0,\\
(0, 0, 0, 1, 1, -1; -1, 0)_{7} &= [0, 0, -1, -2, 3; 0]_{7} \geq 0.
\end{align*} 
The inequalities with $N_i = N(i,11,11n+8)$ and $M_i = M(i,11,11n+8)$ are equivalent to 
\begin{align*}
(0,0,1,0,0,0;0,-1)_{8} &= [1,1,0,1,0;0]_{8} \geq 0,\\
(0,0,0,-1,0,0;1,0)_{8} &= [0,1,-1,1,1;0]_{8} \geq 0,\\
(0, 0, -1, 0, -1, 0; 1, 1)_{8} &= [0, -1, 2, 0, 1; 0]_{8} \geq 0,\\
(0, 1, 0, -2, 0, 1; 0, 0)_{8} &= [0, 0, -3, 2, 2; 0]_{8} \geq 0,\\
(1, 0, -2, 0, 0, 0; 1, 0)_{8} &= [0, 0, 2, -2, 2; 0]_{8} \geq 0,\\
(0, 0, 0, 1, -1, 0; 1, -1)_{8} &= [0, 0, 4, 1, 1; 0]_{8} \geq 0,\\
(0, 0, 1, 1, 0, 0; 0, -2)_{8} &= [0, 1, 2, 1, 0; 0]_{8} \geq 0,\\
(0, -1, 0, 0, 0, -1; 2, 0)_{8} &= [0, 2, 1, 0, 0; 0]_{8} \geq 0.
\end{align*} 
The inequalities with $N_i = N(i,11,11n+9)$ and $M_i = M(i,11,11n+9)$ are equivalent to 
\begin{align*}
(0,0,1,0,0,0;0,-1)_{9} &= [0,2,1,0,0;0]_{9} \geq 0,\\
(0,0,0,0,0,-1;1,0)_{9} &= [0,1,2,1,0;0]_{9} \geq 0,\\
(0, 0, -1, 0, 0, -1; 1, 1)_{9} &= [0, -1, 1, 1, 0; 0]_{9} \geq 0,\\
(1, 0, -1, 0, 0, 0; 1, -1)_{9} &= [0, 0, -3, 2, 2; 0]_{9} \geq 0,\\
(0, 0, 1, -1, -1, 1; 0, 0)_{9} &= [0, 2, -1, -1, 2; 0]_{9} \geq 0.
\end{align*} 
The inequalities with $N_i = N(i,11,11n+10)$ and $M_i = M(i,11,11n+10)$ are equivalent to 
\begin{align*}
(0,1,0,0,0,0;-1,0)_{10} &= [1,2,-1,0,0;0]_{10} \geq 0,\\
(0,0,0,0,-1,0;0,1)_{10} &= [0,2,2,-1,1;0]_{10} \geq 0,\\
(1, -1, 0, 0, -1, 0; 1, 0)_{10} &= [0, -3, 4, 0, 0; 0]_{10} \geq 0,\\
(0, 0, 1, 1, 0, 0; -2, 0)_{10} &= [0, 0, 1, -1, 1; 0]_{10} \geq 0,\\
(0, 1, 0, 0, 1, 0; -2, 0)_{10} &= [0, 1, -2, 2, 0; 0]_{10} \geq 0,\\
(0, 1, -1, -1, 0, 0; 0, 1)_{10} &= [0, 3, -1, 2, 0; 0]_{10} \geq 0.\qedhere
\end{align*} 
\end{proof}

\begin{proof}[Full proof of Corollary \ref{corollary:inequallevel6}] The calculations below can be derived directly from calculations of the dissection elements $Q_{a,m}(q)$ found in Section \ref{section:devrank} and calculations of the dissection elements $Q^{C}_{a,m}(q)$ found in Theorem \ref{theorem:crankdissection}. The positivity of sums of theta quotients can be derived from Proposition \ref{proposition:maincomparison} and Lemma \ref{lemma:addcomparison}. The inequalities with $N_i = N(i,11,11n)$ and $M_i = M(i,11,11n)$ are equivalent to 
\begin{align*}
(0, 0, 1, 2, 0, -1; 0, -2)_{0} &= [0, 1, -3, 0, 3; 0]_{0} \geq 0,\\
(0, 0, 0, 2, 1, -2; 0, -1)_{0} &= [0, 3, 0, -4, 4; 0]_{0} \geq 0,\\
(0, 0, -1, 0, -1, -1; 0, 3)_{0} &= [0, 4, -1, 0, 1; 0]_{0} \geq 0,\\
(0, 0, -1, 1, 0, -2; 0, 2)_{0} &= [0, 5, 0, -3, 3; 0]_{0} \geq 0.
\end{align*} 
The inequalities with $N_i = N(i,11,11n+1)$ and $M_i = M(i,11,11n+1)$ are equivalent to 
\begin{align*}
(0, 2, -1, 1, -2, 0; 0, 0, 0)_{1} &= [0, -4, 1, -1, 4; 0]_{1} \geq 0,\\
(0, 2, -1, 0, 1, -2; 0, 0, 0)_{1} &= [0, -3, -3, 4, 1; 1]_{1} \geq 0,\\
(0, 1, 0, 0, 2, -2; -1, 0, 0)_{1} &= [1, -2, -4, 3, -3; 2]_{1} \geq 0,\\
(0, 1, 0, 1, -3, 1; 0, 0, 0)_{1} &= [0, -2, 3, -4, 4; 0]_{1} \geq 0,\\
(0, 0, 2, 0, 1, -2; -1, 0, 0)_{1} &= [1, 1, -2, 0, -4; 4]_{1} \geq 0.
\end{align*} 
The inequalities with $N_i = N(i,11,11n+2)$ and $M_i = M(i,11,11n+2)$ are equivalent to 
\begin{align*}
(1, 1, -1, 0, 1, 0; -1, -1)_{2} &= [0, 0, -4, 3, 1; 0]_{2} \geq 0,\\
(0, 0, -1, -1, -1, 0; 3, 0)_{2} &= [0, 1, -1, 0, 1; -1]_{2} \geq 0,\\
(0, 1, 1, 0, 0, 1; -2, -1)_{2} &= [0, 1, 0, -1, 1; 1]_{2} \geq 0,\\
(-1, 0, 0, 0, -1, -1; 3, 0)_{2} &= [0, 3, 0, 0, 0; 1]_{2} \geq 0,\\
(-1, 1, 0, 1, 0, -2; 1, 0)_{2} &= [1, 4, -4, 2, -1; 3]_{2} \geq 0,\\
(0, 1, -1, 1, 0, -1; 1, -1)_{2} &= [0, 4, -3, 3, 0; -1]_{2} \geq 0,\\
(0, 1, -1, 0, -1, 0; 2, -1)_{2} &= [0, 4, -2, 1, 1; -2]_{2} \geq 0,\\
(-1, 1, 1, 1, 0, -1; 0, -1)_{2} &= [0, 4, -1, 1, 0; 3]_{2} \geq 0,\\
(-1, 1, 1, 0, -1, 0; 1, -1)_{2} &= [0, 4, 0, -1, 1; 2]_{2} \geq 0.
\end{align*} 
The inequalities with $N_i = N(i,11,11n+3)$ and $M_i = M(i,11,11n+3)$ are equivalent to 
\begin{align*}
(0, 2, 0, -1, 1, -1; 0, -1)_{3} &= [0, -3, 3, -1, 1; 1]_{3} \geq 0,\\
(1, 1, 0, 0, 0, 1; -1, -2)_{3} &= [0, -1, -1, 0, 2; 0]_{3} \geq 0,\\
(1, 1, -1, 0, 1, 0; 0, -2)_{3} &= [0, 0, 0, -2, 2; 1]_{3} \geq 0,\\
(0, 0, -1, -1, 0, -1; 1, 2)_{3} &= [0, 0, 1, -1, 1; 0]_{3} \geq 0,\\
(2, 0, -2, 0, -1, 1; 0, 0)_{3} &= [0, 1, -4, -1, 4; 1]_{3} \geq 0,\\
(1, 0, 0, 1, -1, 1; -1, -1)_{3} &= [0, 1, -3, 1, 1; 1]_{3} \geq 0,\\
(0, -1, 0, 0, -2, 0; 0, 3)_{3} &= [0, 1, -2, 2, 0; 0]_{3} \geq 0,\\
(0, 0, 2, 0, -1, 1; -2, 0)_{3} &= [0, 1, 0, 3, 0; -3]_{3} \geq 0,\\
(1, -1, 0, 1, -2, 0; 0, 1)_{3} &= [1, 4, -4, 1, 0; 0]_{3} \geq 0,\\
(1, 0, 0, 2, 0, 0; -1, -2)_{3} &= [0, 4, -2, 0, 0; 2]_{3} \geq 0.
\end{align*} 
The inequalities with $N_i = N(i,11,11n+4)$ and $M_i = M(i,11,11n+4)$ are equivalent to 
\begin{align*}
(2, -1, 0, -2, 0, 0; 1, 0)_{4} &= [0, -3, 1, -3, 5; 0]_{4} \geq 0,\\
(2, -1, 1, -1, 0, 0; 0, -1)_{4} &= [0, -3, 3, -2, 3; 0]_{4} \geq 0,\\
(1, -1, 1, 0, -1, -1; 0, 1)_{4} &= [0, -1, 5, 0, 0; 0]_{4} \geq 0,\\
(1, 0, 1, 1, -1, -1; -1, 0)_{4} &= [1, 0, 5, -1, -1; 0]_{4} \geq 0,\\
(1, 0, -2, -1, 0, 0; 0, 2)_{4} &= [0, 1, 0, -5, 5; 0]_{4} \geq 0,\\
(0, 0, -1, 0, -1, -1; 0, 3)_{4} &= [0, 2, 2, -1, 1; 0]_{4} \geq 0.
\end{align*} 
The inequalities with $N_i = N(i,11,11n+5)$ and $M_i = M(i,11,11n+5)$ are equivalent to 
\begin{align*}
(0, 0, 0, 2, 0, -2; 1, -1)_{5} &= [1, -1, -1, -3, 1; 4]_{5} \geq 0,\\
(-1, 0, 1, 2, 0, -2; 0, 0)_{5} &= [0, -1, 0, -1, 3; 0]_{5} \geq 0,\\
(-1, 0, 0, 1, 0, -2; 1, 1)_{5} &= [0, -1, 1, -2, 2; 1]_{5} \geq 0,\\
(0, -1, 0, 1, -1, -1; 2, 0)_{5} &= [0, 0, -2, 1, 1; 1]_{5} \geq 0,\\
(-1, 0, 2, 1, 0, -1; -1, 0)_{5} &= [0, 0, 0, 1, 3; -4]_{5} \geq 0,\\
(0, 1, 1, -1, 0, 1; -2, 0)_{5} &= [0, 0, 1, 3, 0; -4]_{5} \geq 0,\\
(-1, 0, 0, -1, 0, -1; 1, 2)_{5} &= [0, 0, 2, -1, 1; -2]_{5} \geq 0,\\
(0, 1, 1, 0, 1, 0; -3, 0)_{5} &= [0, 1, 2, -1, 1; -1]_{5} \geq 0,\\
(1, 0, 1, 0, 0, 1; -2, -1)_{5} &= [0, 2, -1, 2, 0; -1]_{5} \geq 0.
\end{align*} 
The inequalities with $N_i = N(i,11,11n+7)$ and $M_i = M(i,11,11n+7)$ are equivalent to 
\begin{align*}
(-1, 1, 1, 0, 0, -1; -1, 1)_{7} &= [0, -3, -2, 2, 3; 0]_{7} \geq 0,\\
(-1, 1, 1, -1, -1, 0; 0, 1)_{7} &= [0, -3, -1, 4, 0; 0]_{7} \geq 0,\\
(-1, 0, 0, 0, 0, -2; 1, 2)_{7} &= [0, -2, -2, 0, 4; 0]_{7} \geq 0,\\
(1, 0, 0, 2, 0, 0; -1, -2)_{7} &= [0, 0, 2, -2, 2; 0]_{7} \geq 0,\\
(0, 1, 1, 0, 1, 0; -2, -1)_{7} &= [0, 1, -1, 1, 1; 0]_{7} \geq 0.
\end{align*} 
The inequalities with $N_i = N(i,11,11n+8)$ and $M_i = M(i,11,11n+8)$ are equivalent to 
\begin{align*}
(0, 1, -1, -1, -1, 1; 0, 1)_{8} &= [0, -2, 0, 1, 2; 0]_{8} \geq 0,\\
(-1, 0, 0, 0, -2, 0; 1, 2)_{8} &= [0, -2, 2, 2, 0; 0]_{8} \geq 0,\\
(-1, 1, 1, 0, -2, 1; 0, 0)_{8} &= [0, -2, 2, 4, 1; 0]_{8} \geq 0,\\
(0, 1, 0, 1, -1, 1; -1, -1)_{8} &= [0, -2, 3, 1, 1; 0]_{8} \geq 0,\\
(0, 1, -1, 0, -2, 1; 1, 0)_{8} &= [0, -2, 4, 2, 3; 0]_{8} \geq 0,\\
(1, 1, -2, -1, 0, 1; 0, 0)_{8} &= [0, -1, 0, -1, 3; 0]_{8} \geq 0,\\
(-1, 0, 1, 1, -2, 0; 1, 0)_{8} &= [0, -1, 4, 3, 0; 0]_{8} \geq 0,\\
(-1, 0, 1, -1, -1, 0; 1, 1)_{8} &= [0, 0, -1, 3, 0; 0]_{8} \geq 0,\\
(0, 1, 1, 0, 0, 1; -1, -2)_{8} &= [0, 0, 0, 2, 1; 0]_{8} \geq 0,\\
(1, 1, -1, 0, 0, 1; 0, -2)_{8} &= [0, 0, 2, 0, 3; 0]_{8} \geq 0,\\
(-1, 0, 2, 0, -1, 0; 1, -1)_{8} &= [0, 1, 1, 4, 0; 0]_{8} \geq 0,\\
(1, 0, -1, 1, 0, 0; 1, -2)_{8} &= [0, 1, 4, -1, 2; 0]_{8} \geq 0,\\
(-1, 0, 2, -2, 0, 0; 1, 0)_{8} &= [0, 2, -4, 4, 0; 0]_{8} \geq 0,\\
(1, 0, -1, -1, 1, 0; 1, -1)_{8} &= [0, 2, -1, -1, 2; 0]_{8} \geq 0,\\
(1, 0, 0, 1, 1, 0; 0, -3)_{8} &= [0, 2, 2, -1, 1; 0]_{8} \geq 0,\\
(0, 0, 2, 0, 1, 0; 0, -3)_{8} &= [0, 3, -1, 2, 0; 0]_{8} \geq 0.
\end{align*} 
The inequalities with $N_i = N(i,11,11n+9)$ and $M_i = M(i,11,11n+9)$ are equivalent to
\begin{align*}
(-1, 2, 0, -1, -1, 0; 0, 1)_{9} &= [0, -3, 6, -2, 2; 0]_{9} \geq 0,\\
(1, 0, -1, 1, 1, -1; 0, -1)_{9} &= [1, -1, -2, 2, -1; 0]_{9} \geq 0,\\
(0, 0, -1, -1, -1, 0; 1, 2)_{9} &= [0, -1, -1, 0, 2; 0]_{9} \geq 0,\\
(1, 0, -1, -1, -1, 1; 1, 0)_{9} &= [0, 0, -5, 1, 4; 0]_{9} \geq 0,\\
(0, 1, 1, 0, 0, 1; -1, -2)_{9} &= [0, 0, 1, -1, 1; 0]_{9} \geq 0,\\
(1, 0, 0, 1, 1, 0; 0, -3)_{9} &= [0, 1, -2, 2, 0; 0]_{9} \geq 0,\\
(1, -1, -1, 0, 0, -1; 2, 0)_{9} &= [0, 2, -3, 3, 1; 0]_{9} \geq 0,\\
(1, 0, 1, 0, 0, 1; 0, -3)_{9} &= [0, 3, -3, 1, 2; 0]_{9} \geq 0,\\
(1, -1, 0, -1, -1, 0; 2, 0)_{9} &= [0, 4, -4, 2, 3; 0]_{9} \geq 0,\\
(0, -1, 1, -1, -1, 0; 1, 1)_{9} &= [0, 4, -1, 0, 1; 0]_{9} \geq 0,\\
(1, -1, 1, 0, 0, 0; 1, -2)_{9} &= [0, 5, -3, 2, 1; 0]_{9} \geq 0,\\
(1, -2, 1, 0, 0, -1; 1, 0)_{9} &= [1, 6, -4, 2, -1; 0]_{9} \geq 0.
\end{align*} 
The inequalities with $N_i = N(i,11,11n+10)$ and $M_i = M(i,11,11n+10)$ are equivalent to 
\begin{align*}
(1, -1, 1, 1, -1, 0; -1, 0)_{10} &= [0, -3, 5, -1, 1; 0]_{10} \geq 0,\\
(0, -1, 0, 0, -1, -1; 2, 1)_{10} &= [0, -1, 2, 0, 1; 0]_{10} \geq 0,\\
(0, -1, 1, 1, -1, -1; 0, 1)_{10} &= [0, -1, 3, -1, 2; 0]_{10} \geq 0,\\
(-1, 0, 0, 0, 1, -2; 1, 1)_{10} &= [0, 0, -4, 3, 1; 0]_{10} \geq 0,\\
(-1, 0, 1, 1, 0, -1; -1, 1)_{10} &= [0, 2, -1, -1, 2; 0]_{10} \geq 0,
\end{align*}
\begin{align*}
(0, 0, -1, -1, -1, 0; 2, 1)_{10} &= [0, 2, 1, 0, 0; 0]_{10} \geq 0,\\
(1, 1, -1, -1, -1, 1; 0, 0)_{10} &= [1, 2, 2, 0, -1; 0]_{10} \geq 0,\\
(-1, 0, 0, 0, -1, -1; 1, 2)_{10} &= [0, 4, 0, -1, 2; 0]_{10} \geq 0,\\
(0, 2, -1, -1, 0, 1; -1, 0)_{10} &= [1, 5, -2, 0, -1; 0]_{10} \geq 0,\\
(-1, 1, 0, 0, 0, -1; -1, 2)_{10} &= [0, 5, -2, 1, 2; 0]_{10} \geq 0,\\
(0, 2, 0, 0, 0, 1; -3, 0)_{10} &= [1, 5, -1, -1, 0; 0]_{10} \geq 0. \qedhere
\end{align*} 
\end{proof}

\begin{proof}[Full proof of Theorem \ref{theorem:inequalrank6basis}]
The calculations below can be derived directly from calculations of the dissection elements $Q_{a,m}(q)$ from Theorem \ref{theorem:rankdissection}. The positivity of sums of theta quotients can be derived from Proposition \ref{proposition:maincomparisonres6}. The inequalities are equivalent to
\begin{align*}
(0,2,1,-2,2,-3)_{6} &= \Theta(0,0,0,0,11) \geq 0,\\
(2,-2,1,-1,-2,2)_{6} &= \Theta(0,0,0, 11,-11) \geq 0,\\
(1,1,-4,3,1,-2)_{6} &= \Theta(0,0,11,-11,0) \geq 0,\\
(1,6,4,-2,-5,-4)_{6} &= \Theta(0,11,-11,0,0) \geq 0. \qedhere
\end{align*} 
\end{proof}

\begin{proof}[Full proof of Corollary \ref{corollary:ineqrank6level4level6}]
The calculations below can be derived directly from calculations of the dissection elements $Q_{a,m}(q)$ from Theorem \ref{theorem:rankdissection}. The positivity of sums of theta quotients can be derived from Proposition \ref{proposition:maincomparisonres6}. The inequalities are equivalent to
\begin{align*}
(1, -1, 0, 1, -1, 0)_{6} &= \Theta(2, 0, 3, 3, -6) \geq 0,\\
(0, 1, 1, -1, 1, -2)_{6} &= \Theta(1, 0, 0, 1, 6) \geq 0,\\
(0, -1, 1, 2, -1, -1)_{6} &= \Theta(5, 0, 2, 2, -4) \geq 0,\\
(1, 1, -1, 0, 1, -2)_{6} &= \Theta(0, 0, 5, -2, 3) \geq 0,\\
(0, 0, -1, 3, 0, -2)_{6} &= \Theta(4, 0, 6, -5, -1) \geq 0,\\
(1, 0, -2, 2, 0, -1)_{6} &= \Theta(1, 0, 7, -4, -3) \geq 0. \qedhere
\end{align*} 
\end{proof}

\section*{Acknowledgements}
This work was performed at the Saint Petersburg Leonhard Euler International Mathematical Institute and supported by the Ministry of Science and Higher Education of the Russian Federation (agreement no. 075–15–2022–287). I would like to offer my gratitude to my scientific advisor Eric T. Mortenson for the idea to work through deviations of the crank and rank modulo $11$ and for helpful comments and crucial suggestions on references. I also want to thank Wadim Zudilin for pointing out \cite{PR} and Jeremy Lovejoy for suggesting the corrections to the article.

\end{document}